\documentclass[a4paper,11pt]{amsart}
\usepackage{amsmath,amsthm,amssymb,enumitem}
\usepackage[T1]{fontenc}
\usepackage{lmodern}
\usepackage{microtype}
\usepackage{hyperref}
\hypersetup{hidelinks,pdftitle={Oka realizations, fundamental groups, and holomorphic cofibrations},pdfauthor={Bin Guo, Yuta Kusakabe, Peng-Chao Wang}}
\newtheorem{theorem}{Theorem}[section]
\newtheorem{proposition}[theorem]{Proposition}
\newtheorem{lemma}[theorem]{Lemma}
\newtheorem{corollary}[theorem]{Corollary}
\theoremstyle{definition}

\newtheorem{problem}[theorem]{Problem}
\newtheorem{remark}[theorem]{Remark}

\newcommand{\C}{\mathbb C}
\newcommand{\R}{\mathbb R}
\newcommand{\Z}{\mathbb Z}
\newcommand{\Q}{\mathbb Q}

\newcommand{\CP}{\mathbb{CP}}
\newcommand{\GL}{\mathrm{GL}}
\newcommand{\SL}{\mathrm{SL}}
\newcommand{\Conf}{\operatorname{Conf}}

\begin{document}
\title[Oka realizations and holomorphic cofibrations]{Oka realizations, fundamental groups,\newline and holomorphic cofibrations}
\author{Bin Guo}
\address[Bin Guo]{Academy of Mathematics and Systems Science, Chinese Academy of Sciences, Beijing 100190, China}
\email{guobin181@mails.ucas.ac.cn}
\author{Yuta Kusakabe}
\address[Yuta Kusakabe]{Faculty of Mathematics, Kyushu University, Fukuoka 819-0395, Japan}
\email{kusakabe@math.kyushu-u.ac.jp}
\author{Peng-Chao Wang}
\address[Peng-Chao Wang]{Academy of Mathematics and Systems Science, Chinese Academy of Sciences, Beijing 100190, China}
\email{wangpengchao26@mails.ucas.ac.cn}
\subjclass[2020]{Primary 32Q56; Secondary 14F35, 32Q15, 32Q55, 55P10, 55P20}
\keywords{Oka manifold, homotopy type, fundamental group, algebraic ellipticity, cofibration, Fatou--Bieberbach domain}
\date{}

\begin{abstract}
The homotopy types of Oka manifolds are exactly those of countable finite-dimensional CW complexes. We construct realizations as open subsets of complex Euclidean spaces with explicit dimension bounds. Other realizations have good open covers by sets biholomorphic to $\C^N$, with prescribed finite nerves, or are unions of Fatou--Bieberbach domains whose nonempty finite intersections are again Fatou--Bieberbach domains. Every nonempty Stein manifold admits a proper holomorphic embedding into an Oka open subset of some $\C^N$, and this embedding is a homotopy equivalence. We obtain holomorphic representatives of ordinary and generalized cohomology classes, and realize every countable group as the fundamental group of an Oka domain in $\C^5$. The fundamental groups of algebraically elliptic manifolds are precisely the finite groups, and every virtually abelian K\"ahler group is the fundamental group of a projective Oka manifold. Finally, we classify holomorphic cofibrations in L\'arusson's intermediate model structures and establish factorizations over aspherical targets into proper holomorphic embeddings and holomorphic fibre bundles with Oka fibre.
\end{abstract}
\maketitle

\section{Introduction}

Gromov asked whether every finite CW complex is homotopy equivalent to an Oka manifold \cite[0.7.B$''$]{Gro89}. This realization problem relates the flexibility of holomorphic maps to the topology of their targets. We solve it for all countable finite-dimensional CW complexes and study realizations with additional geometric structure, their fundamental groups, and the corresponding questions for holomorphic maps.

Throughout the paper, complex manifolds are second countable, have a fixed finite complex dimension, and may be disconnected. For topological spaces $X$ and $T$, let $\mathcal C(X,T)$ be the space of continuous maps with the compact-open topology. For complex manifolds $X$ and $Y$, let $\mathcal O(X,Y)\subset\mathcal C(X,Y)$ be the subspace of holomorphic maps.

An \emph{Oka manifold} is a complex manifold $Y$ satisfying the \emph{convex approximation property}: for every $m\ge1$ and every compact convex set $K\subset\C^m$, each holomorphic map from a neighbourhood of $K$ to $Y$ can be approximated uniformly on $K$ by holomorphic maps $\C^m\to Y$ \cite[Defs.~2.1 and~2.3]{For13}. Equivalently, maps from Stein manifolds to $Y$ satisfy the parametric Oka property with approximation and interpolation \cite[Thm.~5.4.4]{For17}. In particular, for every Stein manifold $X$, the inclusion $\mathcal O(X,Y)\hookrightarrow\mathcal C(X,Y)$ is a weak homotopy equivalence. A local approximation characterization of Oka manifolds is given in \cite{ADGWX26}. An \emph{Oka realization} of a space is an Oka manifold homotopy equivalent to that space.

\subsection*{Realizations and their geometry}

\begin{theorem}[Oka realizations]\label{intro:countable}
A homotopy type has an Oka realization if and only if it is the homotopy type of a countable finite-dimensional CW complex. Every such homotopy type is realized by an open subset of some $\C^N$ that is the union of countably many Fatou--Bieberbach domains whose nonempty finite intersections are again Fatou--Bieberbach domains.
\end{theorem}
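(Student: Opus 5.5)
Both directions are handled separately, and the constructive one carries the weight.

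\emph{Necessity.} A complex manifold $Y$ of dimension $n$ is a second countable smooth manifold of real dimension $2n$. By Whitehead's triangulation theorem it admits a countable smooth triangulation, so it is homotopy equivalent to a countable CW complex of dimension at most $2n$. Being Oka does not enter this direction.

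\emph{Sufficiency.} Let $T$ be a countable CW complex of dimension $d$. I would first replace $T$ by an open nerve model. Every countable CW complex is homotopy equivalent to a countable locally finite simplicial complex of the same dimension. Each simplex has an open star, and any finite intersection of open stars is either empty or contractible. Hence $T$ is homotopy equivalent to the nerve of a countable good open cover.

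Conversely, suppose $\mathcal V=\{V_i\}$ is a countable open cover of a manifold $Y$ such that every nonempty finite intersection is contractible. Then Weil's nerve theorem, in its paracompact version, gives $Y\simeq N(\mathcal V)$. So it suffices to build an open set $Y=\bigcup_i \Omega_i\subset\C^N$ with these properties:
\begin{enumerate}[label=(\roman*)]
\item each $\Omega_i$ is a Fatou--Bieberbach domain;
\item each nonempty finite intersection $\Omega_{i_0}\cap\dots\cap\Omega_{i_k}$ is again a Fatou--Bieberbach domain, in particular contractible;
\item the index sets of nonempty intersections are exactly the simplices of the prescribed nerve.
\end{enumerate}

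\emph{Oka property of $Y$.} This needs a localization principle. A union of open sets that are Oka, and whose finite intersections are Oka, is Oka, because Kusakabe's characterization lets the convex approximation property be checked locally on Runge pieces. Fatou--Bieberbach domains are biholomorphic to $\C^N$ and hence Oka. If the paper later states such a localization result, I will invoke it. Otherwise I would give a direct argument: apply the convex approximation property inductively over the finitely many $\Omega_i$ met by the image of a compact convex set.

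\emph{Construction of the domains.} I would embed the simplicial complex, of dimension $d$, as a subcomplex of a simplex-like arrangement in $\R^N\subset\C^N$, with $N$ large (roughly $2d+1$ or more). I would use Fatou--Bieberbach domains realized as basins of attraction of automorphisms, or as Runge images of $\C^N$ approximating prescribed convex neighbourhoods of open stars. The key tool is Andersén--Lempert theory: approximate on a compact polynomially convex set $K$ an isotopy of biholomorphic maps between Runge domains by automorphisms of $\C^N$. This yields a Fatou--Bieberbach domain that contains a prescribed compact set and avoids another.

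I would order the finite intersections and build the domains by an exhaustion. At each stage, the finitely many domains are adjusted so that they contain the convex hulls of the required intersection pieces and miss the forbidden pieces. This enforces (iii). A Runge Fatou--Bieberbach domain that is a decreasing intersection of suitably nested images should then also make the intersections in (ii) Fatou--Bieberbach domains, for example via the theorem that a union of an increasing sequence of domains biholomorphic to $\C^N$ is biholomorphic to $\C^N$ under Runge conditions.

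\emph{Main obstacle.} I expect the hardest step to be controlling all finite intersections at once. Showing that $\Omega_{i}\cap\Omega_{j}$ is itself biholomorphic to $\C^N$ is not automatic. My first attempt would be to construct the domains as $\Phi_i^{-1}(\text{convex model})$, with the $\Phi_i$ chosen so that the intersections are increasing unions of Runge sets, each biholomorphic to a ball. The intersection is then an increasing union of balls, which is Fatou--Bieberbach by the Runge-exhaustion criterion. Careful polynomial convexity bookkeeping, possibly with an increase in $N$, is where the real work lies.
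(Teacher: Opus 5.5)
Your necessity argument (triangulating the underlying real $2n$-manifold) is fine. Reducing sufficiency to a countable cover by Fatou--Bieberbach domains with prescribed nerve and Fatou--Bieberbach finite intersections is the right target. But the proposal never produces such a cover. The step you call the main obstacle is the whole content of the theorem, and the tools you suggest do not resolve it. Anders\'en--Lempert exhaustions that force each $\Omega_i$ to contain some compacta and miss others give no control over the biholomorphic type, or even the topology, of $\Omega_i\cap\Omega_j$. Later adjustments can also undo earlier ones, and countably many domains and all their finite intersections must be handled at once. Your proposed remedy is not valid as stated. An increasing union of balls need not be biholomorphic to $\C^N$: Forn\ae ss's short $\C^N$'s are such unions. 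The available criterion for increasing unions of copies of $\C^N$ requires each member to be Runge in the next, and you have no mechanism to arrange this for every finite intersection. Without (ii) you do not even know that the cover is good, so the nerve lemma is unavailable.

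The missing idea is to make the intersections Fatou--Bieberbach for formal rather than analytic reasons. The paper takes pairwise disjoint Fatou--Bieberbach domains $D_1,D_2,\ldots\subset\C^2$ and puts $D_0=\C^2$. To each vertex $\sigma$ of a flag model $L$ of $T$ (a second barycentric subdivision) it assigns labels $\lambda_k(\sigma)\ge0$, indexed by a finite set $I$, and sets $V_\sigma=\prod_{k\in I}D_{\lambda_k(\sigma)}$. Then $\bigcap_{\sigma\in J}V_\sigma=\prod_{k}\bigcap_{\sigma\in J}D_{\lambda_k(\sigma)}$, and each factor is $\C^2$, a single $D_j$, or empty. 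The labels are built from the colouring of vertices by dimension; this is where finite dimensionality keeps $I$ finite. They are chosen so that:
\begin{itemize}
\item in every coordinate, two adjacent vertices have equal labels or at least one zero label;
\item two nonadjacent vertices have distinct nonzero labels in some coordinate;
\item every vertex has a nonzero coordinate.
\end{itemize}
The nerve of such a product cover is always flag, so these conditions give nerve exactly $L$. Every nonempty intersection is then a product of copies of $\C^2$ and of $D_j$'s with at least one proper factor, hence a Fatou--Bieberbach domain.

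Finally, your justification of the Oka property does not work as given. Kusakabe's localization principle concerns Zariski-open Oka pieces. Approximating successively in the $\Omega_i$ met by $f(K)$ fails, because $f(K)$ need not lie in a single $\Omega_i$ and approximations in different pieces cannot be glued. The correct argument is that $Y$ is strongly dominable, since every point lies in an open subset biholomorphic to $\C^N$. Hence $Y$ is Oka by Theorem~\ref{thm:strong-dom}. This uses only the cover members, not their intersections.
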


A \emph{Fatou--Bieberbach domain} is a proper subdomain of $\C^N$ biholomorphic to $\C^N$. The proof in Section~\ref{sec:finite-nerve} uses products of pairwise disjoint Fatou--Bieberbach domains to encode the simplices of a subdivision. The nerve of the resulting cover is homotopy equivalent to the given CW complex, and the nerve lemma gives the same homotopy type for the union. Each point has an open neighbourhood biholomorphic to $\C^N$, so the manifold is strongly dominable and hence Oka by the theorem of An, Du, Guo, Wang and Xie \cite[Thm.~1.3]{ADGWX26}.

A second construction gives an explicit ambient-dimension bound. If $T$ is a nonempty countable CW complex of dimension $d$, Theorem~\ref{thm:low-dimension} realizes it by an Oka open set in $\C^{N(d)}$, where
\[
N(d)=2\left\lfloor\frac d2\right\rfloor+3
=\begin{cases}d+3,&d\text{ even},\\d+2,&d\text{ odd}.
\end{cases}
\]
This construction starts with a Euclidean open set having the required homotopy type and applies a real-linear change of coordinates depending on one complex parameter. Every point of the resulting open set $D$ has a neighbourhood $V\subset D$ and finitely many holomorphic maps $h_j:V\times\C\to D$ with $h_j(y,0)=y$, such that $\{\partial_t h_j(y,0)\}_j$ spans $T_yD$ for every $y\in V$. The criterion for families of entire curves \cite[Cor.~4.9]{ADGWX26}, recalled in Theorem~\ref{thm:entire-families}, then gives the Oka property.

We also realize each finite simplicial complex as the nerve of a good open cover by sets biholomorphic to $\C^D$. If $L$ has $v$ vertices and $s$ maximal simplices, Theorem~\ref{thm:prescribed-nerve} produces a complex manifold of dimension $5s+2$ with a good open cover by exactly $v$ sets biholomorphic to $\C^{5s+2}$ and with nerve $L$. The construction glues copies of $\C^{5s+2}$ by holomorphic transition maps that translate the fibre coordinates. Properness of the differences of the translating maps ensures that the quotient is Hausdorff. For a general countable $d$-dimensional CW complex, the same method gives a realization of dimension $5d+7$ with a countable good open cover by sets biholomorphic to $\C^{5d+7}$.

The dimension bounds can be compared as follows; $d$ is the CW dimension and $s$ is the number of maximal simplices of a finite complex.
\begin{center}
\begin{tabular}{@{}p{.34\textwidth}p{.17\textwidth}p{.40\textwidth}@{}}
Realization & Dimension & Result \\
\hline
Stein manifold & $d$ & Remark~\ref{rem:stein-types} \\
Oka open subset of $\C^N$ & $N(d)$ & Theorem~\ref{thm:low-dimension} \\
Countable good cover by copies of $\C^N$ & $5d+7$ & Corollary~\ref{nerve:countable} \\
Prescribed finite nerve & $5s+2$ & Theorem~\ref{thm:prescribed-nerve}
\end{tabular}
\end{center}

\subsection*{Homotopy types and holomorphic representatives of cohomology classes}

Theorem~\ref{intro:countable} reduces Oka realization to the existence of a countable finite-dimensional CW model. In particular, every Moore space $M(G,n)$ constructed from a countable free resolution of a countable abelian group $G$, with $n\ge1$, has an Oka realization. An Eilenberg--MacLane space $K(G,n)$ is a connected CW complex with $\pi_n\cong G$ and $\pi_j=0$ for $j\ne n$, $j\ge1$. The integral cohomological dimension $\operatorname{cd}_{\Z}G$ is the projective dimension of the trivial $\Z[G]$-module $\Z$. The following corollary characterizes the Eilenberg--MacLane spaces that have Oka realizations.

\begin{corollary}\label{intro:consequences}
\leavevmode
\begin{enumerate}[label=\textup{(\roman*)}]
\item For a group $G$, $K(G,1)$ has an Oka realization if and only if $G$ is countable and $\operatorname{cd}_{\Z}G<\infty$.
\item For a nonzero abelian group $G$ and $n\ge2$, $K(G,n)$ has an Oka realization if and only if $G$ is a finite-dimensional rational vector space and $n$ is odd.
\end{enumerate}
\end{corollary}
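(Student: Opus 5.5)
By Theorem~\ref{intro:countable}, $K(G,n)$ has an Oka realization exactly when it has the homotopy type of a countable finite-dimensional CW complex. Both parts therefore reduce to a purely homotopy-theoretic question: when is $K(G,n)$ homotopy equivalent to such a complex? I would settle this separately for $n=1$ and for $n\ge2$.

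For (i), first suppose $K(G,1)\simeq T$ with $T$ countable and finite-dimensional. Then $G=\pi_1(T)$ is countable, because the $2$-skeleton has countably many cells. The universal cover $\widetilde T$ is contractible and its cellular chain complex is a free $\Z[G]$-resolution of $\Z$ of length $\dim T$. Hence $\operatorname{cd}_\Z G\le\dim T<\infty$. Conversely, let $G$ be countable with $\operatorname{cd}_\Z G=k<\infty$. Countability gives a countable $K(G,1)$ model, for instance the bar construction. The Eilenberg--Ganea theorem then gives a $K(G,1)$ of dimension $\max(k,3)$. That construction starts from a countable model, and truncating a projective resolution keeps it countable, so the resulting complex is still countable. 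I would cite Brown's \emph{Cohomology of Groups} (VIII.7) for the countable version, or note that the Wall/Eilenberg--Ganea argument with a countably generated stably free module $\pi_k$ can be made free by adding countably many cells.

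For (ii), take $n\ge2$ and $G\neq0$ abelian. Sufficiency comes from known models. For $G=\Q^r$ and $n$ odd, the space $K(\Q,n)$ is the rationalized sphere $S^n_{\Q}$, namely a mapping telescope of degree-$m$ self-maps of $S^n$. This is a countable CW complex of dimension $n+1$. Then $K(\Q^r,n)=\prod_{i=1}^r K(\Q,n)$ is countable and finite-dimensional. Necessity is the main obstacle. Suppose $K(G,n)\simeq T$ with $T$ countable of dimension $d$. Then $G=\pi_n(T)$ is countable, and $H^j(K(G,n);M)=0$ for all $j>d$ and all coefficients $M$. I would now rule out the other cases using Serre's and Cartan's computations of the cohomology of Eilenberg--MacLane spaces.

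The case analysis for necessity runs as follows.
\begin{enumerate}[label=\textup{(\alph*)}]
\item If $G$ has a nonzero torsion element, or more generally $G\otimes\Z/p\neq0$ or $\operatorname{Tor}(G,\Z/p)\neq0$ for some prime $p$, then $H^*(K(G,n);\Z/p)$ is nonzero in infinitely many degrees. This follows from Serre's theorem that $H^*(K(\Z/p,n);\Z/p)$ and $H^*(K(\Z,n);\Z/p)$ are polynomial algebras on admissible Steenrod monomials, which are unbounded for $n\ge2$. One reduces to these by using a retraction onto a cyclic summand where possible, and otherwise a Künneth/Serre spectral sequence argument. So $G$ must be a torsion-free divisible group, that is, a $\Q$-vector space.
\item The dimension is at most countable. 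If it were infinite, $H^n(K(G,n);\Q)=\operatorname{Hom}(G,\Q)$ would be fine, but $H_n=G$ of infinite rank is still consistent. So for finite-dimensionality I would instead use the even-$n$ argument together with the odd case. For $n$ odd and $G=\Q^{(\infty)}$, the rational cohomology is an exterior algebra on infinitely many degree-$n$ generators, which is nonzero in all degrees $kn$. This contradicts $\dim T<\infty$, so the rank is finite.
\item For $n$ even, $H^*(K(\Q,n);\Q)$ is a polynomial algebra, which is unbounded. $K(\Q,n)$ is a retract of $K(G,n)$, so this is impossible.
\end{enumerate}

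The delicate point is step (a) for arbitrary non-divisible or torsion countable $G$. There I would first try the reduction via $K(G,n)\to K(G\otimes\Z/p\text{ or }{}_pG,\dots)$ using a universal-coefficient argument on $H_n$ and $H_{n+1}$.
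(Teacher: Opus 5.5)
Your part (i), the sufficiency half of (ii), and the rational steps (b)–(c) are sound. In (i) you make the Eilenberg--Ganea construction countable directly: you start from a countable bar model and make the countably generated projective $\pi_k$ free by an Eilenberg swindle. The paper instead starts from an arbitrary finite-dimensional model $K$ and extracts a countable subcomplex $A$ with $A\hookrightarrow K$ a weak equivalence. Both work.

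The gap is step (a). It is the core of the necessity direction of (ii), and you explicitly leave it open. The claim in (a) is true, but your suggested reductions do not prove it:
\begin{enumerate}[label=\textup{(\roman*)}]
\item Many countable groups that are not $\Q$-vector spaces, such as $\Z[1/p]$, $\Z_{(p)}$ or $\Z/p^\infty$, have no nonzero cyclic direct summand. So no retraction onto $K(\Z,n)$ or $K(\Z/p^k,n)$ is available.
\item The ``K\"unneth/Serre spectral sequence argument'' is unspecified. Corner arguments of the kind used later in the paper need bounded homology of both fibre and base, which you do not have here.
\item Universal-coefficient information about $H_n$ and $H_{n+1}$ says nothing about cohomology in high degrees. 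Also, $G$ has no natural map to ${}_pG$.
\end{enumerate}
Steps (b) and (c) depend on (a), since the retraction onto $K(\Q^s,n)$ presupposes that $G$ is already a $\Q$-vector space. As written, necessity in (ii) is therefore not established.

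The paper closes this uniformly with Miller's theorem. For a finite-dimensional CW complex $X$, the pointed mapping space $\operatorname{map}_*(B\Z/p,X)$ is weakly contractible. For $X=K(G,n)$ its homotopy groups are $\widetilde H^{n-i}(B\Z/p;G)$. This gives $\operatorname{Hom}(\Z/p,G)=\widetilde H^1(B\Z/p;G)=0$ and $G/pG=\widetilde H^2(B\Z/p;G)=0$ for every prime $p$.

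If you want to stay with the Serre--Cartan computations, you can close the gap in two cases.
\begin{enumerate}[label=\textup{(\alph*)}]
\item Suppose $G/pG\ne0$. Choose homomorphisms $\Z\to G\to\Z/p$ with surjective composite. The map $H^*(K(\Z/p,n);\mathbb F_p)\to H^*(K(\Z,n);\mathbb F_p)$ is surjective, since it hits Serre's generators $P^I\iota_n$. Its target is nonzero in infinitely many degrees for $n\ge2$. Because this map factors through $H^*(K(G,n);\mathbb F_p)$, the latter is nonzero in infinitely many degrees.
\item Suppose $G/pG=0$ but $G$ has $p$-torsion. Then the $p$-primary torsion subgroup is $p$-divisible, hence divisible. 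It is therefore a direct summand of $G$, and it contains $\Z/p^\infty$ as a summand. The Bockstein map $K(\Z/p^\infty,n)\to K(\Z,n+1)$ is a mod-$p$ homology isomorphism, because its fibre $K(\Z[1/p],n)$ is mod-$p$ acyclic. Since $K(\Z/p^\infty,n)$ is a retract of $K(G,n)$, case (a)'s conclusion for $K(\Z,n+1)$ transfers to $K(G,n)$.
\end{enumerate}
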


The proofs are given in Corollaries~\ref{cor:aspherical} and~\ref{cor:higher-em}. For every $d\ge1$ and every countable CW complex $T$, realizing the $d$-skeleton gives an Oka manifold $Y$ and a $d$-connected map $Y\to T$ (Corollary~\ref{cor:finite-skeleta-oka}). We use the convention that a $d$-connected map induces a bijection on $\pi_0$, isomorphisms on $\pi_i$ for $0<i<d$, and a surjection on $\pi_d$ at every base point. For Stein manifolds of bounded dimension, these maps give holomorphic representatives of cohomology classes.

\begin{theorem}[holomorphic representatives of cohomology classes]\label{intro:em}
Let $E$ be a spectrum with countable homotopy groups, $k\in\Z$, and $m\ge0$. There are an Oka open set $Y\subset\C^N$ and a class $u\in E^k(Y)$ such that, for every Stein manifold $X$ of complex dimension at most $m$, pullback gives a bijection
\[
\pi_0\mathcal O(X,Y)\longrightarrow E^k(X),\qquad [g]\longmapsto g^*u,
\]
natural with respect to holomorphic maps between Stein sources.
\end{theorem}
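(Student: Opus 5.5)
The plan is to combine Brown representability with the Oka principle and with the skeleton realization (Corollary~\ref{cor:finite-skeleta-oka}). The cohomology theory $E^k(-)$ on CW complexes is represented by the $k$th space $\Omega^\infty\Sigma^k E=:T$ of the $\Omega$-spectrum associated to $E$. Its homotopy groups are $\pi_i T\cong\pi_{i-k}E$, which are countable. So $T$ (each component, and countably many components since $\pi_0T\cong\pi_{-k}E$ is countable) has the homotopy type of a countable CW complex. A fundamental class $\iota\in E^k(T)$ corresponds to $\mathrm{id}_T$, and for any CW complex $X$ the map $[X,T]\to E^k(X)$, $[f]\mapsto f^*\iota$, is a bijection natural in $X$.

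Next I would realize a finite skeleton. Fix $d=2m+1$. By Corollary~\ref{cor:finite-skeleta-oka}, realized as in Theorem~\ref{thm:low-dimension} so that the realization is an Oka open set $Y\subset\C^N$, there is a $d$-connected map $\phi\colon Y\to T$. Put $u=\phi^*\iota\in E^k(Y)$.

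A Stein manifold $X$ with $\dim_\C X\le m$ has the homotopy type of a CW complex of dimension at most $m\le d-1$ (Andreotti--Frankel/Hamm). Whitehead's theorem for $d$-connected maps then says that $\phi_*\colon[X,Y]\to[X,T]$ is a bijection whenever $\dim X<d$. Surjectivity needs only $\dim X\le d$; injectivity uses $X\times[0,1]$, of dimension at most $m+1\le d$. The Oka property, in the form that $\mathcal O(X,Y)\hookrightarrow\mathcal C(X,Y)$ is a weak homotopy equivalence, gives $\pi_0\mathcal O(X,Y)\cong\pi_0\mathcal C(X,Y)=[X,Y]$. Composing these bijections gives $[g]\mapsto g^*\phi^*\iota=g^*u$. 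Naturality for a holomorphic map $h\colon X'\to X$ between Stein sources is immediate from $(g\circ h)^*u=h^*g^*u$.

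The main obstacle is the bookkeeping around the spaces involved. First, $T$ may be disconnected with infinitely many components, so the skeleton realization must handle disconnected countable complexes, which is allowed since our manifolds may be disconnected. Second, $X$ may be noncompact, so the bijection $[X,T]\cong E^k(X)$ must be read for unreduced cohomology on CW complexes defined via the representing space. Third, I must check that Corollary~\ref{cor:finite-skeleta-oka} really yields an open subset of $\C^N$ (via Theorem~\ref{thm:low-dimension} applied to the $d$-skeleton) rather than an abstract manifold.
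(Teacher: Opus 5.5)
Your proposal is correct and follows the paper's route. You represent $E^k$ by a countable CW model of $E_k=\Omega^\infty\Sigma^kE$ (a CW approximation suffices, since $X$ has CW type), realize a skeleton of dimension $d>m$ by an Oka open subset of $\C^N$, and combine the $d$-connectivity of the resulting map with the Oka principle; this is exactly Proposition~\ref{prop:hol-rep} with $r=0$ as used in Corollary~\ref{cor:theories}, and your third concern is moot because Theorem~\ref{intro:countable} already yields an open subset of $\C^N$ (Theorem~\ref{thm:low-dimension}, which you invoke, works equally well).
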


Here $\pi_0\mathcal O(X,Y)$ is the set of path components of $\mathcal O(X,Y)$: two maps represent the same class when they are joined by a continuous family of holomorphic maps $X\to Y$. For any countable CW complex $T$ and fixed $m,r\ge0$, Proposition~\ref{prop:hol-rep} gives an Oka open set $Y\subset\C^N$ and a continuous map $f:Y\to T$ such that, for every Stein manifold $X$ of complex dimension at most $m$, the map $\mathcal O(X,Y)\to\mathcal C(X,T)$, $g\mapsto f\circ g$, is bijective on $\pi_0$ and induces isomorphisms on $\pi_i$ for $1\le i\le r$ at every base point. Theorem~\ref{intro:em} follows in Corollary~\ref{cor:theories}; it applies to ordinary cohomology with countable coefficients, stable cohomotopy, complex cobordism, and real and complex $K$-theory. A single pair $(Y,u)$ gives the bijections for every Stein manifold if and only if the representing space $E_k=\Omega^\infty\Sigma^kE$ has an Oka realization. Related constructions of Oka complements enter Benoist's work on the \'etale cohomology of Stein algebras \cite[\S\S2--3]{Ben26}.

\subsection*{Fundamental groups}

Every countable group occurs as the fundamental group of an Oka domain in $\C^5$ (Corollary~\ref{cor:countable-groups}). The domain can moreover contain a properly embedded Stein surface whose inclusion is a homotopy equivalence. Additional geometric conditions lead to more restrictive classifications.

\begin{theorem}[algebraically elliptic manifolds]\label{intro:algebraic}
The fundamental groups of algebraically elliptic manifolds are exactly the finite groups. Every finite group is the fundamental group of a flexible quasi-affine manifold, and every finite cyclic group is the fundamental group of such a surface. For every finite group $G$ and every $d\ge1$, there is a $d$-connected map from an algebraically elliptic quasi-affine manifold to the classifying space $K(G,1)$.
\end{theorem}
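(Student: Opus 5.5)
The argument has four parts: finiteness of the fundamental group, realization of every finite group, realization of finite cyclic groups by surfaces, and the $d$-connected maps to $K(G,1)$.

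\emph{Finiteness.} Let $Y$ be a connected algebraically elliptic manifold. Such a $Y$ admits an algebraic dominating spray $s\colon E\to Y$ on an algebraic vector bundle. By Gromov's algebraic version of the spray argument, there are finitely many algebraic fibre-dominating sprays whose composed spray is surjective at each point. Iterating the spray from a base point $y_0$ then gives a morphism $\phi\colon\C^M\to Y$ that is dominant and submersive somewhere. I expect the key step to be showing that $\phi_*\pi_1(\C^M)=1$ has finite index in $\pi_1(Y)$. The standard fact I will use is that a dominant morphism of irreducible varieties $\phi\colon V\to Y$ has image of finite index in $\pi_1(Y)$. To prove it, restrict to a Zariski open $U\subset Y$ over which $\phi$ is a locally trivial fibration with finitely many components in its generic fibre, and use that $\pi_1(U)\to\pi_1(Y)$ is surjective for $Y$ smooth. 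The long exact sequence then shows that the image of $\pi_1(\phi^{-1}U)$ has finite index, since $\pi_0$ of the fibre is finite. Because $\phi^{-1}U$ is a Zariski open subset of $\C^M$, it has trivial image after composing with $\pi_1(\C^M)=1$. Thus $\pi_1(Y)$ is finite. This is the main obstacle: one must check the finite-index statement carefully, since the map factors through $\C^M$ and only a generic fibration is available.

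\emph{Realization of finite groups.} Take a faithful representation $G\subset\SL_n(\C)$ and let $G$ act on $\SL_n(\C)$ by left translation, or better on $\C^n\setminus\{0\}$ for a free action. A free linear action exists for some $G$, but not for all, so I would instead use a free action of $G$ on a complex linear algebraic group $H$, for instance $G\subset H=\SL_n$ acting by translation. The quotient $G\backslash\SL_n$ is affine because $G$ is finite. Since $\SL_n$ is simply connected, $\pi_1(G\backslash\SL_n)=G$. Homogeneous spaces of linear algebraic groups with finite stabilizers are flexible, because the $\SL_n$-action from the right is generated by unipotent one-parameter subgroups, which give complete algebraic vector fields spanning the tangent space. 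To obtain quasi-affine manifolds satisfying the $d$-connectedness claim, I would use $G$ acting freely on $Z=\C^{n}\setminus A$, where $A$ is a linear subspace arrangement of codimension larger than $d+1$. Here $Z$ is $d$-connected, the action is free if $A$ contains all fixed loci of the regular representation, and $Z$ is flexible by the known flexibility of such complements. Then $Z/G$ is quasi-affine, since $G$ is finite and quotients of quasi-affine varieties are quasi-affine. It is flexible because the $G$-invariant locally nilpotent derivations still span: they lift from those on $\C^n$ commuting with... If this lifting fails, I would instead average or use the Gizatullin-type transitivity result. Flexible implies algebraically elliptic, and the classifying map $Z/G\to K(G,1)$ is $d$-connected because its homotopy fibre is $Z$.

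\emph{Cyclic surfaces.} Take $\Z/m$ acting on $\C^2\setminus\{0\}$ by $\zeta\cdot(x,y)=(\zeta x,\zeta y)$. The quotient is the complement of the vertex in the cone over the rational normal curve. It is quasi-affine and flexible, being a smooth toric surface minus a fixed point, or equivalently $\mathcal O(-m)$ minus the zero section, which is flexible. Its fundamental group is $\Z/m$.
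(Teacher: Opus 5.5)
Your arguments for the first three assertions are correct, though they differ from the paper's. For finiteness you use generic topological triviality and the exact sequence $\pi_1(\phi^{-1}U)\to\pi_1(U)\to\pi_0(F)$. The paper instead restricts a dominant morphism $\C^m\to Y$ to a transverse affine $n$-plane, so that it becomes generically finite and hence a finite covering over a Zariski open set. In either version a single spray fibre $E_y\cong\C^m$ already maps dominantly, so iterated sprays are unnecessary.

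For realizing finite groups, $G\backslash\SL_n$ is a valid and simpler substitute for $\Conf_n(\C^2)/G$. The maps $Gx\mapsto Gx\exp(tN)$, $N$ nilpotent, are algebraic $\mathbb G_a$-actions whose velocity vectors span, because nilpotent matrices span $\mathfrak{sl}_n$. Your general claim about homogeneous spaces with finite stabilizers is false for tori, but you only need it for $\SL_n$. The scalar $\mu_m$-action also works for surfaces, by the same argument with $\SL_2$, which commutes with scalars and is transitive on $\C^2\setminus\{0\}$. Note that the quotient is the singular cone minus its vertex, not a smooth toric surface minus a point.

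The genuine gap is in the last assertion. Everything rests on $Z/G$ being flexible, or at least algebraically elliptic, and this is not proved.
\begin{itemize}
\item Your sentence on lifting locally nilpotent derivations breaks off.
\item Averaging over $G$ does not help, since a sum of locally nilpotent derivations is in general not locally nilpotent.
\item Flexibility of $Z$ says nothing about $Z/G$ unless the $\mathbb G_a$-actions commute with $G$.
\item Descent of algebraic ellipticity along the \'etale quotient $Z\to Z/G$ is neither proved nor cited. The result the paper uses, \cite[Prop.~6.4.10]{For17}, goes in the lifting direction.
\end{itemize}
The natural equivariant actions in fact fail to span on your $Z$. Take $V=(\C^r)^G$ with $G$ permuting the factors. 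The diagonal actions $\varphi\times\dots\times\varphi$ commute with $G$ and preserve $Z=V\setminus A$. But if $|G|\ge3$, then $Z$ contains points with $x_h=x_{h'}$ for a single pair $h\ne h'$. At such points every velocity vector of these actions has equal $h$- and $h'$-components, so they cannot span. Nor can $G\backslash\SL_n$ replace $Z$: since $\pi_3(\SL_n(\C))\cong\Z$ for $n\ge2$, its classifying map is only $3$-connected.

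The missing idea, which the paper uses, is to delete all diagonals rather than only the fixed loci. Take $G\le\mathfrak S_n$ and $Z=\Conf_n(\C^r)$. The diagonal actions preserve $Z$ and commute with $G$. Infinite transitivity of $\operatorname{SAut}(\C^r)$ for $r\ge2$ makes the group they generate transitive on $\Conf_n(\C^r)$, hence on $\Conf_n(\C^r)/G$. For quasi-affine varieties, transitivity of $\operatorname{SAut}$ is equivalent to flexibility. Since the diagonals have real codimension $2r$, $\Conf_n(\C^r)$ is $(2r-2)$-connected, and the classifying map $\Conf_n(\C^r)/G\to K(G,1)$ is $(2r-1)$-connected.
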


An algebraically elliptic manifold admits a dominating algebraic spray; its underlying complex manifold is Oka. The finite-group and cyclic-surface constructions originate in the second author's work \cite{Kus22}. Section~\ref{sec:algebraic-groups} also gives configuration-space realizations and, for every $d\ge1$, $d$-connected maps from algebraically elliptic manifolds to $K(G,1)$ for finite $G$. Algebraic ellipticity is equivalent to algebraic subellipticity by \cite{KZ23}. Theorem~\ref{thm:elliptic-finite-groups}, Proposition~\ref{grp:cyclic-surfaces}, and Corollary~\ref{cor:algebraic-bg} prove Theorem~\ref{intro:algebraic}.

A group is \emph{virtually abelian} if it has an abelian subgroup of finite index. A \emph{K\"ahler group} is the fundamental group of a compact K\"ahler manifold.

\begin{theorem}[compact K\"ahler Oka manifolds]\label{intro:projective}
A group is the fundamental group of a compact K\"ahler Oka manifold if and only if it is a virtually abelian K\"ahler group. Every such group is the fundamental group of a projective Oka manifold.
\end{theorem}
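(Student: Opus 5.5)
Both directions of Theorem~\ref{intro:projective} will be proved separately: first that compact K\"ahler Oka manifolds have virtually abelian fundamental group, then a construction realizing every virtually abelian K\"ahler group by a projective Oka manifold.

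\emph{Necessity.} Let $Y$ be a compact K\"ahler Oka manifold. Being Oka, $Y$ is dominable by $\C^n$ via a holomorphic map that is nondegenerate at some point. Hence $Y$ is \emph{special} in Campana's sense, since dominability by $\C^n$ excludes Bogomolov sheaves. Dominability should also force Kodaira dimension $-\infty$ (Kobayashi--Ochiai), and more precisely $Y$ is $\C$-connected. For compact K\"ahler manifolds that are special, or $\C^n$-dominable, Campana has conjectured, and in the K\"ahler setting partially proved, that $\pi_1$ is virtually abelian. To avoid relying on conjectures, I would argue more directly as follows. Take a finite étale cover, which is still Oka and compact K\"ahler, and pass to the Albanese/Shafarevich picture. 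Linear representations of $\pi_1(Y)$ are virtually abelian for special manifolds; this is Campana's theorem for linear quotients, via Zuo and Eyssidieux. The main obstacle is nonlinear fundamental groups. Here I would use that the universal cover of an Oka manifold is Oka, so every holomorphic map from a Stein manifold lifts. Combining this with Campana--Claudon--Eyssidieux results on $\C^n$-dominable or special compact K\"ahler manifolds of small dimension, and with the general result (Campana) that $\C$-connectedness, i.e.\ any two points being joined by chains of entire curves, implies $\pi_1$ virtually abelian \emph{if} the known Kähler case is available, should close the argument. I expect this step to be the crux, and I would cite the strongest available theorem of Campana et al.\ for special/Oka compact K\"ahler manifolds rather than reprove it. Since $Y$ is compact K\"ahler, $\pi_1(Y)$ is by definition a K\"ahler group, so it is a virtually abelian K\"ahler group.

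\emph{Sufficiency.} Let $G$ be a virtually abelian K\"ahler group. By the structure theory of virtually abelian K\"ahler groups (Campana, Bauer--Catanese), $G$ is realized as $\pi_1$ of a quotient $Z=(A\times F)/H$. Here $A$ is an abelian variety, $F$ is simply connected projective, and $H$ is a finite group acting freely and diagonally. In fact $G$ fits in $1\to\Lambda\to G\to H\to1$, and one can arrange $Z=A/H$-type hyperelliptic quotients, with $F$ absorbing a free action where needed. Choose $F$ simply connected, projective and Oka, for instance a product of projective spaces or a rational homogeneous manifold, carrying the required $H$-action, and make it free by a further product with a suitable factor. Then $A\times F$ is Oka because tori and rational homogeneous manifolds are Oka and products of Oka manifolds are Oka. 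The quotient $Z$ by a free finite action is Oka, since Oka is preserved by finite unramified covers in both directions. It is projective, and its fundamental group is $G$. The delicate point is arranging freeness while keeping $F$ simply connected and Oka; I would handle it by taking $F$ to contain a simply connected projective Oka factor with a free $H$-action when $H$ does not act freely on $A$, or by citing the explicit realization from the literature.
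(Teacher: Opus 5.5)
\textbf{Sufficiency has a genuine gap.} When $\Gamma$ has torsion, every finite-order affine map of $\C^k$ has a fixed point (average an orbit). So the torsion must act freely on the simply connected factor. What you actually need is a simply connected projective Oka manifold with a free holomorphic action of the finite group $F=\Gamma/\Lambda$. Your suggested candidates cannot supply this.

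Products of projective spaces and rational homogeneous manifolds are rationally connected. A free quotient $X/G$ of a rationally connected $X$ is again rationally connected, hence simply connected, so the covering $X\to X/G$ is trivial and $G=1$. The phrases ``make it free by a further product with a suitable factor'' and ``cite the explicit realization from the literature'' presuppose exactly the object whose existence is the real content of this direction.

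The paper constructs this object. In Theorem~\ref{thm:prime-actions} it takes $S_p=\operatorname{Res}(A/\langle R\rangle)$ with $A=A_0^p$, and obtains a free $\Z/p$ action from the twisted translation $\alpha$. Simple connectivity comes from Armstrong's theorem together with Takayama's result for klt singularities. The Oka property comes from an Oka subset with proper analytic complement, built from complements of lattices in $\C^2$, combined with the criterion that a connected manifold is Oka when such a complement is Oka. Proposition~\ref{grp:finite-actions} then coinduces these actions and takes a product over the prime-order subgroups to get $S_F$, and $X=(T\times S_F)/F$ finishes the argument.

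\textbf{Necessity is incomplete as written.} Your reduction Oka $\Rightarrow$ dominable $\Rightarrow$ special, via Campana's criterion, is the same as the paper's. The concluding step, however, needs the full theorem that special compact K\"ahler manifolds have virtually abelian fundamental group. The paper takes this from Theorem~1.1 of the recent preprint \cite{OAI26}. Your proposed ``more direct'' substitute does not close the gap:
\begin{itemize}
\item results on linear representations say nothing about non-linear quotients of $\pi_1$;
\item knowing that the universal cover is Oka gives no control over $\pi_1$;
\item ``$\C$-connected implies virtually abelian'' is precisely the conjecture you meant to avoid.
\end{itemize}
Separately, dominability does not force Kodaira dimension $-\infty$: complex tori are Oka with $\kappa=0$, and Kobayashi--Ochiai only excludes general type. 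If you explicitly invoke the abelianity theorem for special compact K\"ahler manifolds, the necessity direction is fine. Without it, that direction remains unproved.
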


The existence theorem in Section~\ref{sec:kahler} constructs simply connected projective Oka manifolds with free finite-group actions and combines them with the affine actions of Baues--Riesterer \cite{BR11}. The necessity in Theorem~\ref{intro:projective} uses Campana's criterion \cite[preprint, Cor.~8.11]{Cam04} and the abelianity theorem in the recent preprint \cite[Thm.~1.1]{OAI26}. The constructive realization theorem is stated separately as Theorem~\ref{thm:va-full}.

\subsection*{Holomorphic cofibrations and factorizations}

A connected manifold is \emph{aspherical} if its universal cover is contractible.

In L\'arusson's fine intermediate model structure, the presheaves associated with Stein manifolds are cofibrant, and those associated with Oka manifolds are fibrant \cite{Lar04,Lar05}. The question whether Stein inclusions and biholomorphisms exhaust the holomorphic cofibrations appears in \cite[p.~32]{FL11} and in L\'arusson's appendix to \cite[p.~802]{For13}. Theorem~\ref{cof:main} gives the following classification.
\begin{quote}
A holomorphic map $i:A\to X$ is an intermediate cofibration if and only if it is a biholomorphism or a closed holomorphic embedding with Stein target.
\end{quote}
The statement uses L\'arusson's Stein site, whose objects include disconnected Stein manifolds. Its proof uses pushouts of the generating cofibrations and shows that the target of every holomorphic cofibration that is not a biholomorphism is a holomorphic retract of a Stein manifold.

Every nonempty Stein manifold $S$ admits a proper holomorphic embedding $j:S\to D$, where $D\subset\C^N$ is Oka and $j$ is a homotopy equivalence (Theorem~\ref{thm:stein-open-embedding}). This answers the question about embeddings into Oka manifolds recorded in \cite[\S1.2]{DeV20}. Ritter established such embeddings for open Riemann surfaces with elliptic targets and studied the additional requirement that the target be Stein \cite[\S4]{Rit13}.

Theorem~\ref{thm:factorization} gives a relative version: every holomorphic map from a nonempty Stein manifold to an aspherical complex manifold factors as a proper holomorphic embedding that is a homotopy equivalence followed by a holomorphic fibre bundle with Oka fibre. For a nullhomotopic holomorphic map from a nonempty Stein manifold to a connected complex manifold, a factorization into such an embedding followed by a holomorphic Serre fibration exists if and only if the target is aspherical (Corollary~\ref{cof:aspherical-factorization}). Section~\ref{sec:factorizations} relates these geometric factorizations to intermediate cofibrations.

Sections~\ref{sec:finite-nerve} and~\ref{sec:prescribed-nerves} give the three realization constructions. Section~\ref{sec:applications} develops the homotopy and cohomology consequences. Fundamental groups are treated in Section~\ref{sec:groups}, and Section~\ref{sec:factorizations} proves the cofibration classification and the relative factorization theorem.

\section{Oka open subsets of complex Euclidean spaces}\label{sec:finite-nerve}

We first give Oka realizations with covers by Fatou--Bieberbach domains and then construct Oka open subsets of $\C^N$ with an explicit bound on $N$. We use two criteria for the Oka property from \cite{ADGWX26} and a realization theorem for countable CW complexes.

A complex manifold $Y$ of dimension $n$ is \emph{strongly dominable} if, for every $y\in Y$, there is a holomorphic map $f:\C^n\to Y$ with $f(0)=y$ and $df_0$ invertible.

\begin{theorem}[{\cite[Thm.~1.3]{ADGWX26}}]\label{thm:strong-dom}
A complex manifold is Oka if and only if it is strongly dominable.
\end{theorem}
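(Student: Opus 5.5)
The two implications have very different difficulty, so I would prove them separately.

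\emph{Oka implies strongly dominable.} Let $Y$ be Oka of dimension $n$ and fix $y\in Y$. Choose a holomorphic chart $\phi$ from a neighbourhood of $y$ onto a ball $B\subset\C^n$ centred at $0$ with $\phi(y)=0$, and set $f_0=\phi^{-1}:B\to Y$. Then $f_0(0)=y$ and $d(f_0)_0$ is invertible.

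Rather than pure convex approximation, I would invoke the equivalent characterization recalled in the introduction, namely the Oka property for maps from Stein manifolds with approximation and jet interpolation \cite[Thm.~5.4.4]{For17}. The Stein source is $\C^n$, the interpolation set is the point $0$, and the interpolation is to first order. Pick a closed ball $K\Subset B$. The function $f_0$ is holomorphic near $K$, and $K\cup\{0\}=K$ is polynomially convex. So there is a holomorphic map $f:\C^n\to Y$ that approximates $f_0$ on $K$ and has the same $1$-jet as $f_0$ at $0$. This gives $f(0)=y$ and $df_0=d(f_0)_0$ invertible.

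If only plain convex approximation were available, there is an alternative. Approximate $f_0$ on $K$ by entire maps $f$; by Cauchy estimates $df$ at $0$ stays close to $d(f_0)_0$. Then correct $f(0)$ to $y$ by precomposing with a small affine map of $\C^n$, which is possible because $f$ is a local biholomorphism near $0$ and, by the inverse function theorem, $y\in f(\text{small ball})$.

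\emph{Strongly dominable implies Oka.} This is the substantial direction, and I would verify Kusakabe's spray condition: $Y$ is Oka if, for every compact convex $K\subset\C^m$ and every holomorphic $f:U\to Y$ on a neighbourhood $U$ of $K$, there is a dominating holomorphic spray $F:U'\times\C^N\to Y$ over a smaller neighbourhood $U'$ of $K$, with $F(\cdot,0)=f$.

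The local input comes from strong dominability. For each $y$ take $g_y:\C^n\to Y$ with $g_y(0)=y$, which is a biholomorphism from a neighbourhood $W_y$ of $0$ onto a neighbourhood $V_y$ of $y$. On the open set $f^{-1}(V_y)$ this gives a dominating spray with values in $Y$:
\[
F_y(x,t)=g_y\bigl((g_y|_{W_y})^{-1}(f(x))+t\bigr),\qquad t\in\C^n .
\]

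The main obstacle is globalizing these local sprays over all of $K$. The difficulty is that $F_y$ is only defined for $x$ in a small set, and the sprays for different $y$ are unrelated. My first attempt would be to cover $K$ by finitely many small convex pieces and build the global spray inductively over a Cartan-string-type decomposition $K=\bigcup_j K_j$. At each step, the spray already built over $A=K_1\cup\dots\cup K_{j-1}$ and the local spray over $K_j$ have to be glued on $A\cap K_j$. For this I would use the standard Forstneri\v{c} gluing lemma for dominating sprays over Cartan pairs: find a fibre-preserving transition map close to the identity that intertwines the two sprays near the zero section, split it as $\alpha\circ\beta^{-1}$, and compose. This requires the local sprays to be close to each other on overlaps. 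I would arrange that by first shrinking the pieces so that $f$ is nearly constant on each, and then enlarging the fibre dimension by composition of sprays, which preserves domination.

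If this direct gluing does not close up, the fallback is the localization principle: show that every point of $Y$ has a neighbourhood that is Oka relative to $Y$, which is immediate from the local sprays $F_y$, and then apply Kusakabe's theorem that this local property implies the Oka property.
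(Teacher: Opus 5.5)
The paper does not prove this statement; it quotes it from \cite{ADGWX26}. Your direction ``Oka $\Rightarrow$ strongly dominable'' is fine. For the jet-interpolation version you should first extend $f_0$ continuously to $\C^n$, e.g.\ via a radial retraction onto a ball slightly larger than $K$. Your Cauchy-estimate variant needs only the convex approximation property and avoids this.

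The converse has a genuine gap: the Cartan-pair gluing cannot produce the spray that Kusakabe's criterion requires. The transition map intertwining two sprays is obtained near the zero section, i.e.\ on $C\times P$ for a \emph{bounded} parameter ball $P$. The splitting $\alpha\circ\beta^{-1}$ shrinks $P$ further, so the glued object is only defined on $(A\cup B)\times P'$. Local dominating sprays of this kind over maps from neighbourhoods of compact convex sets exist for \emph{every} complex manifold (compose flows of vertical vector fields on a Stein neighbourhood of the graph), so nothing is gained. The whole point of the criterion is that $F$ be defined on $U'\times\C^N$, and this is exactly what the gluing loses.

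Moreover, the hypothesis of the gluing lemma, that the two sprays be uniformly close on $C\times P$, cannot be arranged by making $f$ nearly constant on the pieces. Away from $t=0$, the values $F_y(x,t)$ and $F_{y'}(x,t)$ are governed by the unrelated entire maps $g_y$ and $g_{y'}$. In the standard theory that closeness comes from approximating $F_A|_{C\times P}$ using the convex approximation property of $Y$, which is what you are trying to prove. Composition of sprays does not help either, because $F_{y'}$ is only defined over points of $V_{y'}$, and $F_y(x,t)$ leaves $V_{y'}$ for large $t$. What is missing is a way to make the dominating entire maps $g_y$ depend holomorphically on the base point along $f$, and strong dominability provides no such family.

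The fallback does not repair this. Kusakabe's localization principle applies to covers by \emph{Zariski} open subsets that are themselves Oka. Your $V_y$ are small Euclidean neighbourhoods, neither Zariski open nor Oka. The Euclidean-topology localization you invoke would say that it suffices for each point to have a neighbourhood $V$ such that holomorphic maps from neighbourhoods of compact convex sets into $V$ can be approximated by entire maps into $Y$. That is not a theorem of Kusakabe; it is essentially the local approximation characterization of \cite{ADGWX26} mentioned in the introduction, i.e.\ the substance of the result being proved. Verifying that local condition from strong dominability is immediate: approximate $(g_y|_{W_y})^{-1}\circ f$ by a polynomial map and compose with $g_y$. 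So the fallback reduces the problem to an unproved step that carries all of its difficulty.
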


We also use the following criterion.
\begin{theorem}[{\cite[Cor.~4.9]{ADGWX26}}]\label{thm:entire-families}
Let $Y$ be a complex manifold. Suppose that every point of $Y$ has an open neighbourhood $V$ and finitely many holomorphic maps
\[
h_j:V\times\C\longrightarrow Y,\qquad h_j(y,0)=y,
\]
such that the tangent vectors $\partial_t h_j(y,0)$ span $T_yY$ at every $y\in V$. Then $Y$ is Oka.
\end{theorem}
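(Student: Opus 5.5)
The plan is to derive Oka-ness from a local, spray-type condition rather than to produce strongly dominating maps $\C^n\to Y$ directly. The direct route would compose the $h_j$: choose $n$ indices with $\partial_t h_{j_1}(y,0),\dots,\partial_t h_{j_n}(y,0)$ a basis of $T_yY$ and set $f(t)=h_{j_1}(h_{j_2}(\cdots h_{j_n}(y,t_n)\cdots),t_1)$, which would give $f(0)=y$ with $df_0$ invertible. This breaks down because the intermediate points $h_{j_n}(y,t_n)$ need not remain in $V$ once $t_n$ ranges over all of $\C$, so the composition is undefined.

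I will therefore use a Kusakabe-type characterization of Oka manifolds. In the form I want, it says $Y$ is Oka provided that for every compact convex $K\subset\C^m$ and every holomorphic $f$ from a neighbourhood of $K$ into $Y$ there is a dominating family $F:U\times\C^N\to Y$ with $F(\cdot,0)=f$, where domination means $\partial_tF(x,0)$ is surjective onto $T_{f(x)}Y$. I will combine this with the local approximation characterization of \cite{ADGWX26}, which lets me assume that $f$ maps a neighbourhood of $K$ into a single set $V$ from the hypothesis, after subdividing and gluing in the usual Cartan-pair manner.

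For such an $f$, the maps $F_j(x,t)=h_j(f(x),t)$ are defined on $U\times\C$, satisfy $F_j(x,0)=f(x)$, and their $t$-derivatives at $t=0$ jointly span $T_{f(x)}Y$. The key step is to assemble these into a single dominating family, i.e.\ to show that a finite collection of one-parameter entire deformations of $f$ whose velocities jointly span is as good as one dominating spray. I expect this to be the main obstacle, for the same reason as above: composing $F_k(\cdot,t_k)$ after $F_{k-1}(\cdot,t_{k-1})$ requires the intermediate map to stay in $V$.

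The first thing I would try is to make the compositions use only small parameters. For $t=(t_1,\dots,t_k)$ with $|t|$ small, the iterated map
\[
F(x,t)=h_k\bigl(h_{k-1}(\cdots h_1(f(x),t_1)\cdots,t_{k-1}),t_k\bigr)
\]
is defined for $x$ near $K$ and is dominating at $t=0$. Its only defect is that it is not entire in $t$. To remove this, I would replace $t_j$ by $\epsilon\,\lambda_j(x,s)$ with entire parameters $s$, where $\epsilon$ and the functions $\lambda_j$ are chosen so that all intermediate stages stay in $V$ over a slightly larger convex set. This should give the approximation on $K$ needed in the convex approximation property. Passing from local domination over convex sets to the full convex approximation property is then standard: one uses the homotopy-and-gluing scheme of Forstnerič's book \cite{For17}, inducting over an exhaustion of $\C^m$ by convex sets and controlling errors at each step. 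If making the compositions uniform proves too delicate, the fallback is to argue along single curves, using each entire curve $t\mapsto h_j(y,t)$ separately and invoking the criterion in \cite{ADGWX26} that identifies Oka-ness with the availability of such spanning entire curves locally.
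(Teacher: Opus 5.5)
The paper gives no proof of this statement; it quotes it from \cite[Cor.~4.9]{ADGWX26}. Your proposal does not supply an independent proof either. Its only complete route is the fallback of invoking the criterion in \cite{ADGWX26} on spanning entire curves, which is the statement itself, so as a proof it is circular. The main line breaks at exactly the step you flag.

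You need a single family $F:U\times\C^N\to Y$, entire in the fibre variables, with $F(\cdot,0)=f$ and $\partial_sF(x,0)$ surjective, assembled from $F_j(x,t)=h_j(f(x),t)$. Suppose you set $t_j=\epsilon\lambda_j(x,s)$ with $\lambda_j(x,\cdot)$ entire on $\C^N$, and require the stage after $h_1$ to lie in $V$, the domain of the next $h_j$, for every $s$. Then $s\mapsto\epsilon\lambda_1(x,s)$ must take values in the open set $\Omega_x=\{t\in\C:h_1(f(x),t)\in V\}$.
\begin{itemize}
\item Keeping it small makes it bounded, hence constant by Liouville's theorem.
\item Regardless of size, $\C\setminus\Omega_x$ in general contains at least two points, so Picard's theorem forces constancy anyway.
\end{itemize}
The same argument applies to every parameter except the outermost one, $t_k$, whose output need not stay in $V$. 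So at most the single direction $\partial_th_k(f(x),0)$ survives in $\partial_sF(x,0)$, and domination is lost.

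This is the same obstruction that blocks the naive composition. The usual remedy, composing sprays as in the passage from subellipticity to the Oka property, needs sprays defined over all of $Y$. Here the $h_j$ exist only on $V\times\C$.

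The localization step is also only asserted. Kusakabe's criterion requires entire dominating sprays over \emph{every} holomorphic map from a neighbourhood of a compact convex set into $Y$. In the exhaustion-and-gluing scheme you sketch, the intermediate maps on ever larger convex sets do not take values in a single $V$. Moreover, the standard Cartan-pair gluing of sprays shrinks the parameter domain, so it does not produce sprays that are entire in the fibre variable.

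You would therefore need two things:
\begin{itemize}
\item a precisely stated localized criterion that asks only about maps into the sets $V$;
\item a mechanism showing that finitely many \emph{non-composable} one-parameter entire deformations over such maps suffice for that criterion.
\end{itemize}
That mechanism is the actual content imported from \cite[Cor.~4.9]{ADGWX26}, and your proposal contains no substitute for it.
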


\begin{lemma}[{\cite[\S2, Fact~2 and its proof]{KS10}}]\label{lem:topological-open}
Every nonempty countable CW complex of dimension $d$ is homotopy equivalent to an open subset of $\R^{2d+1}$.
\end{lemma}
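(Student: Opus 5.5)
The plan is the standard three-step route: replace $T$ by a locally finite simplicial complex, embed that complex piecewise-linearly in $\R^{2d+1}$, and take a regular neighbourhood of the image.

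\emph{Step 1: reduce to a simplicial complex.} By the classical theorem of Whitehead, every countable CW complex $T$ of dimension $d$ is homotopy equivalent to a countable simplicial complex $K$ of the same dimension $d$. This comes from the cellular approximation argument: attach cells inductively along simplicial approximations of the attaching maps, replacing each cell attachment by a mapping cylinder built on a subdivision. Each component of $K$ is countable. We may take $K$ locally finite, because a countable complex is homotopy equivalent to a locally finite one of the same dimension. For this one uses telescope or mapping-cylinder constructions: replace a vertex of infinite valence by a ray with the incident simplices spread out along it, which preserves both the homotopy type and the dimension.

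\emph{Step 2: embed in $\R^{2d+1}$.} Choose a map sending the vertices of $K$ to points of $\R^{2d+1}$ in general position, tending to infinity along an enumeration, and extend it linearly on simplices. General position ensures that two simplices of dimension at most $d$ whose images meet must intersect only along their common face, since $d+d<2d+1$. The image is then a closed, locally finite subcomplex $|K|\subset\R^{2d+1}$ homeomorphic to $K$. Properness follows from local finiteness together with the vertices escaping to infinity.

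\emph{Step 3: take a regular neighbourhood.} Take the open star of $|K|$ in the second barycentric subdivision of a triangulation of $\R^{2d+1}$ containing $|K|$ as a full subcomplex. This is an open regular neighbourhood $U$, and it deformation retracts onto $|K|$ by the usual simplicial collapse along the join structure. Alternatively, a closed locally finite polyhedron is an ENR and is a neighbourhood deformation retract. Then $U\simeq K\simeq T$, which proves the lemma.

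The main obstacle is the locally finite replacement in Step 1 together with the properness of the embedding in Step 2. Without these, the image need not be closed and the regular-neighbourhood argument fails. The first thing to try is Whitehead's construction carried out with infinite telescopes, checking at each stage that the dimension does not increase.
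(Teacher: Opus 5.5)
Your three steps are the route the paper indicates; the paper gives only a one-sentence sketch citing KS10. However, the justification of properness in Step~2 is false as stated. Local finiteness together with vertices leaving every bounded set does not make the linear extension proper. Let $K$ be the disjoint union of edges $[a_n,b_n]$, and send $a_n\mapsto(n,0,\dots,0)$ and $b_n\mapsto(-n,1/n,0,\dots,0)$, perturbed slightly into general position. Every image segment passes within about $1/n$ of the origin, so the image is not closed and Step~3 does not apply. The point is that a simplex can have all its vertices far out while its image crosses a fixed ball. The fix is to control a linear coordinate instead of the norm. Enumerate the vertices $v_1,v_2,\dots$ and place $v_n$ at $\gamma(n)$, where $\gamma(t)=(t,t^2,\dots,t^{2d+1})$. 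Any $2d+2$ distinct points of this curve are affinely independent by the Vandermonde determinant, which is the general position you need. On the image of a simplex, the first coordinate is at least the least index of its vertices. Hence only the finitely many simplices containing one of $v_1,\dots,v_{\lfloor C\rfloor}$ meet $\{x_1\le C\}$, and the map is proper with closed image.

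The locally finite replacement in Step~1 also needs a different mechanism. Spreading the simplices at one vertex along a ray is not yet an argument, for three reasons. These simplices share faces through $v$, so separating them creates cycles that must be filled. The links can themselves fail to be locally finite, for instance when infinitely many triangles share one edge. And infinitely many such modifications need a limit argument. The full telescope $\bigcup_n K_n\times[n,n+1]$ of finite subcomplexes is locally finite, but it has dimension $d+1$ and would only give $\R^{2d+3}$. For $d\ge1$, telescope only the $(d-1)$-skeleton. Write $K^{(d-1)}=\bigcup_nK_n$ with $K_n$ finite, and let $\operatorname{Tel}=\bigcup_nK_n\times[n,n+1]$. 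Attach the $d$-simplices $\sigma_1,\sigma_2,\dots$ along $\partial\sigma_k\times\{m_k\}$, where $m_1<m_2<\cdots$ and $\partial\sigma_k\subset K_{m_k}$. The result is locally finite of dimension $d$ and triangulable by prisms. The projection $\operatorname{Tel}\to K^{(d-1)}$ is a homotopy equivalence. The gluing lemma for the cofibration $\coprod_k\partial\sigma_k\hookrightarrow\coprod_k\sigma_k$ then gives a homotopy equivalence with $K$. With these two repairs, your argument proves the lemma.
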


Indeed, one can replace the complex by a countable locally finite simplicial complex of the same dimension, properly embed it in $\R^{2d+1}$, and take an open regular neighbourhood. We use the nerve lemma for open covers of paracompact spaces \cite[Cor.~4G.3]{Hat02}. A \emph{good open cover} is an open cover all of whose nonempty finite intersections are contractible. The \emph{nerve} of an indexed open cover $(U_i)_{i\in I}$ is the simplicial complex whose simplices are the nonempty finite sets $J\subset I$ such that $\bigcap_{j\in J}U_j\ne\varnothing$.

\subsection{Fatou--Bieberbach covers}
A simplicial complex is \emph{flag} if every finite set of pairwise adjacent vertices spans a simplex. A \emph{nonedge} is a pair of distinct vertices not joined by an edge. There are infinitely many pairwise disjoint Fatou--Bieberbach domains $D_1,D_2,\ldots\subset\C^2$ \cite[Example~9.5]{RR88}. Put $D_0=\C^2$.

The proof of Theorem~\ref{intro:countable} assigns a finite tuple of labels to each vertex of a barycentric subdivision $L$ and associates a product of the domains $D_j$ to that tuple. Two products associated with distinct vertices intersect exactly when those vertices are adjacent in $L$.\footnote{The corresponding interval-graph representation assigns $\R$ to label $0$ and pairwise disjoint intervals to the positive labels in each coordinate. Conditions (A) and (B) recover the $1$-skeleton of $L$ as the intersection of these interval graphs; compare the definition of boxicity in \cite{Rob69}.}

\begin{proof}[Proof of Theorem~\ref{intro:countable}]
Every complex manifold has the homotopy type of a countable finite-dimensional CW complex \cite[Prop.~3]{KS10}. This proves necessity.

For the realization, let $T$ be a countable CW complex of dimension $n$. It is homotopy equivalent to a countable simplicial complex $P_0$ of dimension $n$ \cite[Thm.~2C.5]{Hat02}. Let $P$ be the barycentric subdivision of $P_0$, and colour each vertex of $P$, that is, each simplex of $P_0$, by its dimension in $\{0,\dots,n\}$. The vertices of a simplex $\sigma$ of $P$ have distinct colours; let $t(\sigma)$ be the set of these colours, and for $C\subseteq t(\sigma)$ let $\sigma|_C$ be the face of $\sigma$ spanned by its vertices with colours in $C$.

Let $L$ be the barycentric subdivision of $P$: its vertices are the nonempty simplices of $P$, and its simplices are the chains under inclusion. Then $L$ is flag, $|L|\cong|P|\simeq T$, and a nonedge of $L$ is a pair of simplices of $P$ neither of which contains the other. Fix an injection $\iota$ of the set of simplices of $P$ into $\{1,2,\dots\}$, and use the following labellings of the vertices of $L$ by nonnegative integers:
\begin{enumerate}[label=\textup{(\roman*)}]
\item for each nonempty $C\subseteq\{0,\dots,n\}$, let $\varphi_C(\sigma)=\iota(\sigma|_C)$ if $C\subseteq t(\sigma)$, and $\varphi_C(\sigma)=0$ otherwise;
\item for each pair $\{S,S'\}$ of subsets of $\{0,\dots,n\}$ neither of which contains the other, let $\psi_{S,S'}(\sigma)=\iota(\sigma)$ if $t(\sigma)\in\{S,S'\}$, and $\psi_{S,S'}(\sigma)=0$ otherwise.
\end{enumerate}
These labellings are indexed by a nonempty finite set $I$, since the colour set is finite; write $\lambda=(\lambda_k)_{k\in I}$ and $q=|I|$. They satisfy:
\begin{enumerate}[label=\textup{(\Alph*)}]
\item on every edge, the two labels in each coordinate are equal or at least one of them is zero;
\item every nonedge has distinct nonzero labels in some coordinate.
\end{enumerate}
For (A), let $\sigma\subsetneq\tau$ be an edge of $L$. If $\varphi_C(\sigma)$ and $\varphi_C(\tau)$ are nonzero, then $\sigma|_C$ and $\tau|_C$ are faces of $\tau$ with the same colours, hence equal. Since $t(\sigma)\subsetneq t(\tau)$, at most one of $t(\sigma),t(\tau)$ lies in $\{S,S'\}$, so one of $\psi_{S,S'}(\sigma),\psi_{S,S'}(\tau)$ is zero.

For (B), let $\{\sigma,\tau\}$ be a nonedge. If $t(\sigma)\subseteq t(\tau)$ then $\varphi_{t(\sigma)}$ gives $\sigma$ and $\tau$ the nonzero labels $\iota(\sigma)$ and $\iota(\tau|_{t(\sigma)})$, which are distinct since $\sigma\not\subseteq\tau$; the case $t(\tau)\subseteq t(\sigma)$ is symmetric. Otherwise $\psi_{t(\sigma),t(\tau)}$ gives them the distinct nonzero labels $\iota(\sigma)$ and $\iota(\tau)$.

Put $V_\sigma=\prod_{k\in I}D_{\lambda_k(\sigma)}$ and let $Y=\bigcup_\sigma V_\sigma$ be the union over the countably many vertices $\sigma$ of $L$, an open subset of $(\C^2)^I=\C^{2q}$. For a nonempty finite set $J$ of vertices,
\[
\bigcap_{\sigma\in J}V_\sigma
=\prod_{k\in I}\bigcap_{\sigma\in J}D_{\lambda_k(\sigma)}.
\]
If $J$ is not a simplex of $L$, then it contains a nonedge since $L$ is flag, and (B) makes some factor empty. If $J$ is a simplex, then (A) forces all nonzero labels in each coordinate to be equal, so every factor is $\C^2$ or a single $D_j$. For $\sigma\in J$, the factor in the coordinate of $\varphi_{t(\sigma)}$ is $D_{\iota(\sigma)}$. Hence the intersection is a proper subdomain of $\C^{2q}$ biholomorphic to $\C^{2q}$, that is, a Fatou--Bieberbach domain.

Thus the open cover $(V_\sigma)$ of $Y$ has nerve $L$ and contractible nonempty finite intersections, and the nerve lemma gives $Y\simeq|L|\simeq T$ \cite[Cor.~4G.3]{Hat02}. For each $y\in Y$, choose a cover member $V_\sigma$ containing $y$ and a biholomorphism $f:\C^{2q}\to V_\sigma$ with $f(0)=y$. Composing $f$ with the inclusion $V_\sigma\hookrightarrow Y$ gives a holomorphic map whose differential at $0$ is invertible. Thus $Y$ is strongly dominable and is Oka by Theorem~\ref{thm:strong-dom}.
\end{proof}

The dimension of this realization depends only on $n$: there are $a=2^{n+1}-1$ labellings $\varphi_C$ and at most $a(a-1)/2$ labellings $\psi_{S,S'}$, so $2q\le a(a+1)<4^{n+1}$.

If $T$ is a finite CW complex, $P_0$ can be chosen finite \cite[Thm.~2C.5]{Hat02}. Then the construction gives a finite good open cover of $Y$ all of whose nonempty finite intersections are Fatou--Bieberbach domains.

\begin{remark}[Stein manifolds]\label{rem:stein-types}
The homotopy types with Oka realizations are also exactly those of smooth manifolds \cite[Prop.~3]{KS10}, and of Stein manifolds. More precisely, every nonempty countable CW complex of dimension $d$ is homotopy equivalent to a Stein manifold of complex dimension $d$. This follows from the results of Eliashberg \cite[Thm.~1.3.1]{Eli90} for $d\ge3$ and Gompf \cite[Thm.~3.1]{Gom98} for $d=2$; the cases $d\le1$ are elementary. See \cite[Thm.~3.6]{BH26} for this formulation and its proof.

The dimension bound $d$ is sharp in general. A Stein manifold of complex dimension $n$ has the homotopy type of a CW complex of dimension at most $n$ \cite[Cor.~3.12.2]{For17}, so a Stein realization of $S^d$ must have complex dimension at least $d$. For the problem of Stein Oka realizations, see Problem~\ref{prob:stein-oka}.
\end{remark}

\subsection{Low-dimensional Oka open sets}\label{tw:domain-section}

For an open set $U\subset\C^{2q}$, we construct an Oka open set $D_U\subset\C^{2q+1}$ homotopy equivalent to $U$.

\begin{theorem}\label{thm:low-dimension}
Let $X$ be a nonempty countable CW complex of finite dimension $d$.
There is an Oka open set $D\subset\C^N$ homotopy equivalent to $X$, where
\[
 N=2\left\lfloor\frac d2\right\rfloor+3
 =\begin{cases}
 d+3,&d\text{ is even},\\
 d+2,&d\text{ is odd}.
 \end{cases}
\]
\end{theorem}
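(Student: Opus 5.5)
By Lemma~\ref{lem:topological-open}, $X$ is homotopy equivalent to an open set $U\subset\R^{2d+1}$. That is too many real dimensions, so I first reduce: $2q:=2\lfloor d/2\rfloor+2\ge d+1$ real dimensions suffice. Replace $X$ by a countable locally finite simplicial complex of dimension $d$. Since $2q\ge d+1$, it embeds properly and piecewise linearly in $\R^{2q}$; in codimension at least one I expect a proper immersion to be deformable into an embedding. I am not sure this works, and the fallback is to use an open thickening of a $d$-dimensional spine. Identify $\R^{2q}=\C^q$ and take an open regular neighbourhood $U\subset\C^q$. Then $N=2q+1$ in the statement must come from one extra complex dimension, and the paper's announced construction $D_U\subset\C^{2q+1}$ should be read with $U$ open in a real space of dimension $2q$. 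So I aim to construct, for an open $U\subset\R^{2q}$, an Oka open set $D_U\subset\C^{2q+1}$ with $D_U\simeq U$.

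\textbf{Construction.} Write points of $\C^{2q+1}$ as $(z,t)$ with $z\in\C^{2q}$ and $t\in\C$. For $t\in\C$ let $A_t:\C^{2q}\to\R^{2q}$ be a real-linear surjection depending holomorphically on $t$, for instance $A_t(z)=\mathrm{Re}(e^{-i\theta(t)}z)$, or better $\mathrm{Re}(z)+\mathrm{Im}(t)\,\mathrm{Im}(z)$ suitably adjusted, and set
\[
D=\{(z,t): A_t(z)\in U\}.
\]
For each fixed $t$, the fibre is $U\times\R^{2q}$ (after a real-linear change of coordinates), so $D\to\C_t$ is a locally trivial bundle with contractible base and fibres $\simeq U$. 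Hence $D\simeq U\simeq X$. This is the step that needs $A_t$ to vary continuously with constant rank.

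\textbf{Oka property.} Verify the hypothesis of Theorem~\ref{thm:entire-families}. Two kinds of entire curves are available.
\begin{itemize}
\item Lines $s\mapsto(z+sw,t)$ with $w\in\ker_{\C}$-directions, meaning complex directions $w$ such that $A_t(w')=0$ for all $w'\in\C w$. For a single $t$ these form only a totally real kernel, so the parameter dependence must be used: choose $A_t$ so that moving $z$ in a direction $v(t)$ while simultaneously moving $t$ keeps $A_t(z)$ constant, giving holomorphic curves $s\mapsto(z+g(s),t+s)$ that solve $A_{t+s}(z+g(s))=A_t(z)$ with $g$ entire.
\item The choice that works is to make $\ker A_t=\{\mathrm{Im}(e^{-it}\cdot)\}$-type rotating subspaces. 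Concretely, take $A_t(z)=\mathrm{Re}(e^{-i\bar{}}\dots)$; this needs refining so that it is holomorphic in $t$, which suggests using the real part of $e^{t}$-twisted coordinates.
\end{itemize}
The tangent vectors of these curves at $s=0$ should span $\C^{2q+1}$: the directions $(v,0)$ with $v\in\C^{2q}$ come from complexified kernel directions at nearby $t$, and the $t$-direction comes from the second family.

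\textbf{Main obstacle.} The hardest step is choosing the family $A_t$ so that it depends on one complex parameter while there are enough entire curves staying inside $D$ globally, not just infinitesimally. My first attempt is to define $D$ as the preimage of $U\times\R^{2q}$ under a holomorphic family of real-linear automorphisms $\Phi_t$ of $\C^{2q}$ whose real-span structure rotates, and to find the maps $h_j$ as flows that move $t$ and compensate in $z$ explicitly (affine in $s$). I would then check spanning by a determinant computation at each point of $D$.
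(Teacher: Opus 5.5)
The proposal has two genuine gaps.

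\textbf{The dimension reduction is unnecessary and false.} You reread the announced construction with $U$ open in $\R^{2q}$. In the paper, however, $U$ is open in $\C^{2q}=\R^{4q}$, and $q=\lfloor d/2\rfloor+1$ is the least integer with $4q\ge 2d+1$. So Lemma~\ref{lem:topological-open} gives $V\subset\R^{2d+1}$ with $V\simeq X$, and $U=V\times\R^{4q-2d-1}$ already has the right homotopy type.

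Your claim that every countable $d$-complex is homotopy equivalent to an open subset of $\R^{2\lfloor d/2\rfloor+2}$ fails. Proper PL embeddings into $\R^{d+1}$ or $\R^{d+2}$ do not exist in general: $K_5$ does not embed in $\R^2$, and the $2$-skeleton of the $6$-simplex does not embed in $\R^4$. The claim is false even up to homotopy. For $d=3$ the Moore space $S^2\cup_2e^3$ has $H_2\cong\Z/2$. But every open $U\subset\R^4$ has $H_2(U;\Z)\cong\check H^1(S^4\setminus U;\Z)$ by Alexander duality, and this group is torsion-free, being a direct limit of first cohomology groups of nerves.

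Moreover, a fibrewise surjection $A_t:\C^{2q}\to\R^{2q}$ with fibres $U\times\R^{2q}$ spends half of the real dimensions on a contractible factor. Starting from the $\R^{2d+1}$ of Lemma~\ref{lem:topological-open}, your scheme would need ambient dimension at least $2d+2$.

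\textbf{The construction itself is missing, and your condition cannot be met.} A real-valued family $t\mapsto A_t(z)$ that depends holomorphically on $t$ is constant, so $D$ would just be $A^{-1}(U)\times\C$. What must be holomorphic in the parameter is the inverse parametrization.

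The paper uses the antilinear map $J(a,b)=(\overline b,-\overline a)$ on $\C^q\oplus\C^q$, with $J^2=-I$ and $\langle u,Ju\rangle=0$. Such a $J$ exists only in even complex dimension, which is why $\C^{2q}$ appears. It sets $D_U=\{(w,\zeta):(I+\zeta J)^{-1}w\in U\}$. The map $(u,\zeta)\mapsto(u+\zeta Ju,\zeta)$ is a diffeomorphism $U\times\C\to D_U$ that is complex affine in $\zeta$, so each $\zeta\mapsto(u+\zeta Ju,\zeta)$ is an entire line in $D_U$.

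These exact lines give only the single direction $(Ju_0,1)$ at each point. Spanning therefore needs a quantitative step that your plan lacks. The identity $\|u+\zeta Ju\|^2=(1+|\zeta|^2)\|u\|^2$ and Cauchy--Schwarz give Lemma~\ref{tw:uniform}: the line $\eta\mapsto(A+\eta B,\eta)$ stays in $D_U$ for every $\eta\in\C$ whenever $B(u_0,r)\subset U$ and $\|A-u_0\|^2+\|B-Ju_0\|^2<r^2$. Tilting the slope to $B_j=Ju_0+\epsilon e_j$ then yields the $2q+1$ maps $h_j((w,\zeta),t)=(w+tB_j,\zeta+t)$ needed in Theorem~\ref{thm:entire-families}.

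Your idea of affine curves that move $t$ and compensate in $z$ has the right shape. Without an estimate that is uniform in the parameter, though, no pointwise determinant computation shows that the perturbed curves stay in $D$.
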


For $q\ge1$, equip $\C^{2q}=\C^q\oplus\C^q$ with the antilinear isometry
\[
 J(a,b)=(\overline b,-\overline a).
\]
It satisfies $J^2=-I$ and $\langle u,Ju\rangle=0$. Thus
\begin{equation}\label{tw:norm}
 \|u+\zeta Ju\|^2=(1+|\zeta|^2)\|u\|^2,
 \qquad u\in\C^{2q},\quad \zeta\in\C.
\end{equation}
The real-linear map $T_\zeta:u\mapsto u+\zeta Ju$ is invertible, with
\begin{equation}\label{tw:inverse}
 T_\zeta^{-1}(w)=\frac{w-\zeta Jw}{1+|\zeta|^2}.
\end{equation}
For a nonempty open set $U\subset\C^{2q}$, put
\begin{equation}\label{tw:domain}
 D_U=\left\{(w,\zeta)\in\C^{2q}\times\C:
       \frac{w-\zeta Jw}{1+|\zeta|^2}\in U\right\}.
\end{equation}
The smooth map
\begin{equation}\label{tw:diffeomorphism}
 \Phi:U\times\C\longrightarrow D_U,
 \qquad (u,\zeta)\longmapsto(u+\zeta Ju,\zeta)
\end{equation}
is a diffeomorphism by \eqref{tw:inverse}. Consequently,
$D_U\simeq U$.

\begin{lemma}\label{tw:uniform}
Suppose that $B(u_0,r)\subset U$. If $A,B\in\C^{2q}$ satisfy
\[
 \|A-u_0\|^2+\|B-Ju_0\|^2<r^2,
\]
then $(A+\zeta B,\zeta)\in D_U$ for every $\zeta\in\C$.
\end{lemma}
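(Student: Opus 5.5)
We need $\|T_\zeta^{-1}(A+\zeta B)-u_0\|<r$ for every $\zeta\in\C$; then $T_\zeta^{-1}(A+\zeta B)\in B(u_0,r)\subset U$, which by \eqref{tw:domain} means $(A+\zeta B,\zeta)\in D_U$. The plan is to compute $T_\zeta^{-1}(A+\zeta B)-u_0$ explicitly using \eqref{tw:inverse} and write it as a sum of two pieces, one controlled by $A-u_0$ and one by $B-Ju_0$, each damped by the factor $1+|\zeta|^2$. The bound will then come from Cauchy--Schwarz.

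First, since $T_\zeta$ is real-linear, $T_\zeta(u_0)=u_0+\zeta Ju_0$ and hence $T_\zeta^{-1}(u_0+\zeta Ju_0)=u_0$. Setting $a=A-u_0$ and $b=B-Ju_0$, we get $A+\zeta B=(u_0+\zeta Ju_0)+(a+\zeta b)$. Therefore
\[
T_\zeta^{-1}(A+\zeta B)-u_0=T_\zeta^{-1}(a+\zeta b)=\frac{a+\zeta b-\zeta J(a+\zeta b)}{1+|\zeta|^2}.
\]
Antilinearity gives $J(\zeta b)=\bar\zeta Jb$, so the numerator equals $(a-\bar\zeta\zeta Jb)+\zeta(b-Ja)$. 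Regrouping, this is $T_{-\zeta}$-type data: $a-\zeta Ja$ plus $\zeta(b-\bar\zeta Jb)$.

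Next, estimate the norm using \eqref{tw:norm} together with the fact that $J$ is an isometry. This gives $\|a-\zeta Ja\|=\sqrt{1+|\zeta|^2}\,\|a\|$. For the second piece, $\|\zeta b-|\zeta|^2Jb\|=|\zeta|\,\|b-\bar\zeta Jb\|=|\zeta|\sqrt{1+|\zeta|^2}\,\|b\|$. Hence
\[
\|T_\zeta^{-1}(A+\zeta B)-u_0\|\le\frac{\|a\|+|\zeta|\,\|b\|}{\sqrt{1+|\zeta|^2}}\le\sqrt{\|a\|^2+\|b\|^2}<r
\]
by Cauchy--Schwarz applied to the vectors $(1,|\zeta|)$ and $(\|a\|,\|b\|)$. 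This proves the lemma.

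The only delicate step is bookkeeping the antilinearity of $J$: conjugates appear when $J$ passes a complex scalar, so one must check that \eqref{tw:norm} is applied with the correct parameter. Since \eqref{tw:norm} holds for every parameter, the sign or conjugate of $\zeta$ does not affect the norm identity. No other obstacle is expected.
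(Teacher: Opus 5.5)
Your proof is correct and follows the paper's argument. Both routes reach the bound $(\|a\|+|\zeta|\,\|b\|)/\sqrt{1+|\zeta|^2}$ and finish with Cauchy--Schwarz; the paper gets there faster by reading off from \eqref{tw:norm} that $\|T_\zeta^{-1}w\|=\|w\|/\sqrt{1+|\zeta|^2}$ for every $w$ and then applying the triangle inequality to $\|a+\zeta b\|$, which makes your explicit expansion and the antilinearity bookkeeping unnecessary.
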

\begin{proof}
Write $\delta A=A-u_0$ and $\delta B=B-Ju_0$. By
\eqref{tw:norm}, \eqref{tw:inverse} and the Cauchy--Schwarz inequality,
\[
 \left\|T_\zeta^{-1}(A+\zeta B)-u_0\right\|
 =\frac{\|\delta A+\zeta\delta B\|}{\sqrt{1+|\zeta|^2}}
 \le\sqrt{\|\delta A\|^2+\|\delta B\|^2}<r
\]
for every $\zeta\in\C$.
\end{proof}

\begin{proposition}\label{tw:oka}
For every nonempty open set $U\subset\C^{2q}$, the open set $D_U$ is Oka.
\end{proposition}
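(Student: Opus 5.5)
We will verify the hypothesis of Theorem~\ref{thm:entire-families} for $D_U$, using entire curves that are affine in the $w$-direction and translate $\zeta$. Lemma~\ref{tw:uniform} is designed for this: for fixed $\zeta$, the fibre of $D_U$ is the image of $U$ under $T_\zeta$. The lemma says that if $A$ is near $u_0$ and $B$ is near $Ju_0$, then the whole complex line $\zeta\mapsto(A+\zeta B,\zeta)$ stays in $D_U$. Such lines are holomorphic in $\zeta$ although $J$ is antilinear, because $A$ and $B$ are independent vectors.

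First fix a point $p=(w_0,\zeta_0)\in D_U$ and set $u_0=T_{\zeta_0}^{-1}w_0\in U$. Choose $r>0$ with $B(u_0,r)\subset U$. The natural line through $p$ is obtained by taking $B=Ju_0$ and $A=w_0-\zeta_0 Ju_0=u_0+\zeta_0Ju_0-\zeta_0Ju_0=u_0$. Then $A+\zeta_0B=w_0$ and $\|A-u_0\|^2+\|B-Ju_0\|^2=0<r^2$, so Lemma~\ref{tw:uniform} applies with slack.

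Next define the families. For $y=(w,\zeta)$ in a small neighbourhood $V$ of $p$, put $u(y)=T_\zeta^{-1}w$, which depends continuously on $y$. For each of the $2q+1$ directions we want a map $h_j(y,t)$ holomorphic in $(y,t)$; this holomorphic dependence on $y$ is the main obstacle, since $u(y)$ is only real-analytic in $y$. To get around it we freeze the base data at $p$ and let the point $y$ enter only affinely-holomorphically. Concretely, take
\[
h_0(y,t)=\bigl(w+t\,Ju_0,\ \zeta+t\bigr),
\]
with $u_0$ fixed. For this curve we have $A=w-\zeta Ju_0$ and $B=Ju_0$. As $y\to p$, the vector $A$ tends to $w_0-\zeta_0Ju_0=u_0$, so for $V$ small the hypothesis of Lemma~\ref{tw:uniform} holds uniformly on $V$. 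Hence $h_0$ maps $V\times\C$ into $D_U$, is holomorphic, and satisfies $h_0(y,0)=y$.

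For the remaining directions, let $e_1,\dots,e_{2q}$ be a complex basis of $\C^{2q}$ and $\varepsilon>0$ small, and take
\[
h_j(y,t)=\bigl(w+t(Ju_0+\varepsilon e_j),\ \zeta+t\bigr).
\]
Here $B=Ju_0+\varepsilon e_j$ and $A=w-\zeta B$. Then $\|B-Ju_0\|=\varepsilon$ and $A\to u_0-\zeta_0\varepsilon e_j$ as $y\to p$. Choosing $\varepsilon$ small relative to $r$ and $|\zeta_0|$, and then $V$ small, keeps $\|A-u_0\|^2+\|B-Ju_0\|^2<r^2$ on $V$, so again $h_j(V\times\C)\subset D_U$.

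The derivatives are $\partial_th_0(y,0)=(Ju_0,1)$ and $\partial_th_j(y,0)=(Ju_0+\varepsilon e_j,1)$. The differences $\partial_th_j-\partial_th_0=(\varepsilon e_j,0)$ span $\C^{2q}\times\{0\}$, and $\partial_th_0$ has $\zeta$-component $1$. So the $2q+1$ vectors span $T_yD_U=\C^{2q+1}$ at every $y\in V$. Theorem~\ref{thm:entire-families} then shows that $D_U$ is Oka. The only delicate point is the uniform choice of $\varepsilon$ and $V$, which is a routine continuity estimate.
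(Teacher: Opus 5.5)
Your proposal is correct and is essentially the paper's proof: the same frozen vectors $B_0=Ju_0$ and $B_j=Ju_0+\varepsilon e_j$, the same maps $h_j(y,t)=(w+tB_j,\zeta+t)$ checked against Lemma~\ref{tw:uniform} on a small neighbourhood of the base point, and the same spanning argument before applying Theorem~\ref{thm:entire-families}. The choice you call routine is made explicit in the paper as $\varepsilon\sqrt{1+|\zeta_0|^2}<r/2$. This is exactly what your limit $\|A-u_0\|^2+\|B-Ju_0\|^2\to\varepsilon^2(1+|\zeta_0|^2)$ needs.
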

\begin{proof}
Fix $y_0=(w_0,\zeta_0)\in D_U$, put
$u_0=T_{\zeta_0}^{-1}(w_0)$, and choose $r>0$ such that
$B(u_0,r)\subset U$. For the standard basis
$e_1,\ldots,e_{2q}$ of $\C^{2q}$, choose $\epsilon>0$ such that
\[
 \epsilon\sqrt{1+|\zeta_0|^2}<r/2,
\]
and put
\[
 B_0=Ju_0,\qquad B_j=Ju_0+\epsilon e_j\quad(1\le j\le2q).
\]
For each $j$, the pair $(w_0-\zeta_0B_j,B_j)$ satisfies the strict
inequality in Lemma~\ref{tw:uniform}. Since there are finitely many $j$, there is an open neighbourhood
$V\subset D_U$ of $y_0$ such that
\[
 \|w-\zeta B_j-u_0\|^2+\|B_j-Ju_0\|^2<r^2
 \qquad ((w,\zeta)\in V,\ 0\le j\le2q).
\]
Consider
\[
 h_j:V\times\C\longrightarrow\C^{2q+1},
 \qquad h_j((w,\zeta),t)=(w+tB_j,\zeta+t),
 \qquad 0\le j\le2q.
\]
For each $(w,\zeta)\in V$, the image of
$t\mapsto h_j((w,\zeta),t)$ is the complex affine line
$\{(w-\zeta B_j+\eta B_j,\eta):\eta\in\C\}$.
Lemma~\ref{tw:uniform} therefore gives $h_j(V\times\C)\subset D_U$.
Each $h_j$ is holomorphic on $V\times\C$ and satisfies $h_j(y,0)=y$. For every $y\in V$, $\partial_t h_j(y,0)=(B_j,1)$; subtracting $(B_0,1)$ from
the other vectors gives $(\epsilon e_j,0)$. Hence these $2q+1$ vectors
form a basis of the tangent space at every point of $V$.
Theorem~\ref{thm:entire-families}, applied componentwise, proves the
Oka property.
\end{proof}

\begin{proof}[Proof of Theorem~\ref{thm:low-dimension}]
By Lemma~\ref{lem:topological-open}, there is a nonempty open set
$V\subset\R^{2d+1}$ with $V\simeq X$. Set
\[
 q=\left\lfloor\frac d2\right\rfloor+1,
 \qquad
 U=V\times\R^{4q-(2d+1)}\subset\R^{4q}=\C^{2q}.
\]
Here $q\ge1$ and $4q-(2d+1)\ge0$.
The open set $U$ is homotopy equivalent to $V$.
Proposition~\ref{tw:oka} gives an Oka open set
$D_U\subset\C^{2q+1}$, and \eqref{tw:diffeomorphism} gives
$D_U\simeq U\simeq X$. The dimension is the displayed value of $N$.
\end{proof}

\begin{problem}[Dimension of Oka realizations]\label{prob:oka-dimension}
For each $d\ge1$, what is the least integer $n$ such that every
nonempty countable CW complex of dimension at most $d$ is homotopy
equivalent to an Oka open subset of $\C^n$?
\end{problem}

Theorem~\ref{thm:low-dimension}, applied in dimension $e\le d$ and
followed by taking a product with $\C^{N(d)-N(e)}$, gives the upper
bound $N(d)$ for this least integer.

\subsection{Extension of holomorphic functions}

\begin{proposition}\label{tw:envelope}
If $U\subset\C^{2q}$ is nonempty and connected, every holomorphic
function on $D_U$ extends uniquely to an entire function on
$\C^{2q+1}$. If $U$ is a proper open subset, then $D_U$ is not Stein.
\end{proposition}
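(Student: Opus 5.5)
The plan is to deduce both assertions from the fact that $D_U$ contains, through each of its points, entire complex lines in an open cone of directions. We then transfer this property to the envelope of holomorphy using plurisubharmonicity of directional boundary distances.

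\emph{Step 1: lines in $D_U$.} Since $\Phi$ is a diffeomorphism and $U$ is connected, $D_U\cong U\times\C$ is connected. So uniqueness of an entire extension follows from the identity theorem, and only existence needs proof. Fix $y_0=(w_0,\zeta_0)\in D_U$. The argument in the proof of Proposition~\ref{tw:oka}, based on Lemma~\ref{tw:uniform}, gives a neighbourhood $V$ of $y_0$ and an open set $\Gamma\subset\C^{2q}$ (a small ball around $Ju_0$) with the following property: for every $y\in V$ and every $B\in\Gamma$, the whole affine line $y+\C\,(B,1)$ lies in $D_U$. The set of directions $\mathcal V=\{(B,1):B\in\Gamma\}$ is open in $\C^{2q+1}$. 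Every vector of $\C^{2q+1}$ is a finite sum of vectors of $\mathcal V$, because $\mathcal V$ contains a basis, as in the proof of Proposition~\ref{tw:oka}.

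\emph{Step 2: the envelope.} Let $\pi:\widehat D\to\C^{2q+1}$ be the envelope of holomorphy of $D_U$. It is a connected pseudoconvex Riemann domain, hence Stein by Oka's theorem. For a fixed vector $X$, let $\delta(p,X)\in(0,\infty]$ be the supremum of the radii $\rho$ such that the disc $\{\pi(p)+tX:|t|<\rho\}$ lifts into $\widehat D$ through $p$. Pseudoconvexity means that $-\log\delta(\cdot,X)$ is plurisubharmonic on $\widehat D$. For each $X\in\mathcal V$, Step 1 gives $\delta(\cdot,X)=\infty$ on a nonempty open subset of the copy of $D_U$ in $\widehat D$. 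A plurisubharmonic function equal to $-\infty$ on a nonempty open subset of a connected manifold is identically $-\infty$, so $\delta(\cdot,X)\equiv\infty$ on $\widehat D$. Hence every line through every point of $\widehat D$, in any direction of $\mathcal V$, lifts to $\widehat D$. Walking along finitely many such lines, using the spanning property from Step 1, shows that $\pi$ is surjective and has the path lifting property along such polygonal paths. I would then argue that $\pi$ is a covering map: the lifts of the straight segments depend continuously on the starting point, which gives the local lifting. Since $\C^{2q+1}$ is simply connected, it follows that $\pi$ is biholomorphic. Therefore every $f\in\mathcal O(D_U)$ extends to an entire function. I expect this step to be the main obstacle, in particular making the directional-distance plurisubharmonicity precise for Riemann domains and deducing the covering property. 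As a fallback, I would use the Kontinuitätssatz for the continuous families of lines $y+\C(B,1)$ directly.

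\emph{Step 3: non-Steinness.} Suppose $D_U$ were Stein. A Stein domain is its own envelope of holomorphy, so by Step 2 we would have $D_U=\C^{2q+1}$. Then every slice $\{w:(w,\zeta)\in D_U\}=T_\zeta(U)$ would equal $\C^{2q}$. Taking $\zeta=0$ gives $U=\C^{2q}$, which contradicts the assumption that $U$ is a proper open subset. Hence $D_U$ is not Stein when $U\ne\C^{2q}$.
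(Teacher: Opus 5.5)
Your proof is correct in outline, but it takes a genuinely different and much heavier route than the paper.

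\textbf{The paper's route.} The paper never leaves $\C^{2q+1}$. It fixes a single ball $B(u_0,r)\subset U$ and uses the complex affine change $(w,\zeta)\mapsto(w-u_0-\zeta Ju_0,\zeta)$ to identify $D_{B(u_0,r)}$ with the explicit Hartogs-type domain $H_r=\{\|v\|<r\sqrt{1+|\zeta|^2}\}$. It then extends $f\in\mathcal O(H_r)$ by the one-variable Cauchy integral over $|\eta|=R$. This integral is holomorphic on $\{\|v\|<r\sqrt{1+R^2}\}\times\{|\zeta|<R\}$ and equals $f$ where $\|v\|<r$, because those vertical lines lie in $H_r$. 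Letting $R\to\infty$ gives the entire extension, and the identity theorem on the connected set $D_U$ finishes. The paper thus uses only one direction of complete lines, together with the growth of the $v$-fibres of $H_r$ in $|\zeta|$. It is elementary and self-contained.

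\textbf{Your route.} You pass through the envelope of holomorphy $\widehat D$, use plurisubharmonicity of $-\log\delta(\cdot,X)$ on a pseudoconvex Riemann domain, propagate $\delta\equiv\infty$ by connectedness, and then show that $\pi$ is biholomorphic. This uses a spanning family of directions and cites real Riemann-domain theory. In return it gives a more general criterion: any domain in $\C^n$ with complete lines, in a spanning open cone of directions, through every point of some nonempty open set has envelope $\C^n$. Your Step~3 is essentially the paper's non-Stein argument.

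\textbf{Closing the step you flag.} Choose a basis $X_1,\dots,X_{2q+1}$ of constant fields from $\mathcal V$ and pull them back by the local biholomorphism $\pi$. This gives commuting holomorphic vector fields on $\widehat D$. They are $\C$-complete precisely because $\delta(\cdot,X_j)\equiv\infty$. Their flows define a holomorphic $\C^{2q+1}$-action on $\widehat D$ that covers translations under $\pi$. Each orbit map is therefore injective and open. By connectedness there is a single orbit, so each orbit map is a biholomorphism onto $\widehat D$ and $\pi$ is biholomorphic. No covering-space argument is needed.

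\textbf{Two small slips.} First, $\mathcal V=\{(B,1):B\in\Gamma\}$ lies in the hyperplane where the last coordinate is $1$, so it is not open in $\C^{2q+1}$; the cone $\C^*\mathcal V$ is open. Second, ``finite sum'' should read ``finite complex linear combination.'' Both are harmless, since your lines are complex.
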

\begin{proof}
Choose $B(u_0,r)\subset U$. The complex affine coordinate change
\[
 (w,\zeta)\longmapsto(w-u_0-\zeta Ju_0,\zeta)
\]
identifies $D_{B(u_0,r)}$ with
\[
 H_r=\{(v,\zeta)\in\C^{2q}\times\C:
             \|v\|<r\sqrt{1+|\zeta|^2}\}.
\]
Let $f$ be holomorphic on $H_r$. For $R>0$, Cauchy integration over
$|\eta|=R$ gives a holomorphic function
\[
 F_R(v,\zeta)=\frac{1}{2\pi i}\int_{|\eta|=R}
                 \frac{f(v,\eta)}{\eta-\zeta}\,d\eta
\]
on
\[
 \{\|v\|<r\sqrt{1+R^2}\}\times\{|\zeta|<R\}.
\]
For $\|v\|<r$, the entire vertical line lies in $H_r$, so the Cauchy
formula gives $F_R=f$ when also $|\zeta|<R$.
The identity theorem makes these extensions agree for different $R$.
Their domains exhaust $\C^{2q+1}$, giving an entire extension of $f$.

Apply this argument to the restriction of a holomorphic function on
$D_U$ to $D_{B(u_0,r)}$, using the affine coordinate change above.
The resulting entire function agrees with the original function on all
of $D_U$ by connectedness. The identity theorem also gives uniqueness.
If $U$ is proper, \eqref{tw:domain} shows that $D_U$ is a proper domain.
Its extension property prevents it from being a domain of holomorphy,
so it is not Stein.
\end{proof}

\section{Good open covers with prescribed nerves}\label{sec:prescribed-nerves}

\begin{theorem}\label{thm:prescribed-nerve}
Let $L$ be a finite simplicial complex with $v\ge1$ vertices and
$s$ maximal simplices. There is an Oka manifold $M_L$ of complex
dimension $D=5s+2$ with a good open cover $W_1,\ldots,W_v$ such that
each $W_i$ is biholomorphic to $\C^D$ and the nerve of the cover is
exactly $L$. In particular, $M_L\simeq|L|$.
The manifold $M_L$ has holomorphically trivial canonical bundle and
topologically trivial complex tangent bundle.
\end{theorem}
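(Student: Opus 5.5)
The manifold $M_L$ will be built by gluing $v$ copies $W_1,\dots,W_v$ of $\C^D$, following the description in the introduction. Write $\C^D=\C^2\times(\C^5)^s$, with one base factor $\C^2$ and one fibre block $\C^5$ for each maximal simplex $\tau_1,\dots,\tau_s$ of $L$. The idea is to glue the base coordinates by the identity and to let the fibre coordinates differ by translations that depend holomorphically on the base point. For each $k$ and each vertex $i$, I choose a holomorphic map $g_{i,k}:\C^2\to\C^5$. If $i\in\tau_k$, I take $g_{i,k}=0$. Then, for vertices $i$ and $j$, the region of $W_i$ glued to $W_j$ is
\[
W_{ij}=\{(z,w)\in W_i:\ w_k\in\Omega_{ijk}(z)\ \text{for all }k\},
\]
and the transition map is $(z,w)\mapsto(z,(w_k+g_{i,k}(z)-g_{j,k}(z))_k)$. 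The fibre sets $\Omega_{ijk}(z)$ must be chosen so that the following hold. (a) $W_{ij}$ is nonempty exactly when $\{i,j\}$ lies in some maximal simplex. (b) Intersections over a set $J$ are nonempty exactly when $J$ is a simplex, and they are then biholomorphic to $\C^D$, or at least contractible.

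A cleaner formulation is as a quotient. Take the disjoint union of the $W_i$ with their identity charts, identify points of $W_i$ and $W_j$ over the same base point whose translated fibre coordinates agree, and allow this identification only in the blocks $k$ with $i,j\in\tau_k$. In blocks where $i$ or $j$ is missing from $\tau_k$, the translation difference $g_{i,k}-g_{j,k}$ is chosen to be a proper holomorphic map $\C^2\to\C^5$. For example, I can take generic proper holomorphic embeddings with pairwise differences proper, which is possible since $5\ge 2\cdot2+1$.

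For Hausdorffness, a sequence converging to two distinct points $p\in W_i$ and $q\in W_j$ must have its translation differences bounded. Properness then confines the base coordinates to a compact set, and a standard argument shows $p=q$ on the glued locus. This Hausdorff check is the step I expect to be the main obstacle. The difficulty is arranging the gluing loci so that the quotient is Hausdorff, so that the cocycle condition holds on triple overlaps, and so that the nerve is exactly $L$ (no extra overlaps for non-simplices). My first attempt would fix the gluing as "$W_i\cap W_j$ equals the image of $W_i$ minus the closed set where the translations differ", and then check the cocycle condition blockwise. The cocycle condition is automatic for translations, since $(g_i-g_j)+(g_j-g_l)=g_i-g_l$.

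Next I verify the nerve and the good-cover property. An intersection $\bigcap_{i\in J}W_i$ is nonempty if and only if some point is glued across all of $J$. By construction this happens if and only if $J$ is contained in some $\tau_k$, that is, $J$ is a simplex. In that case the intersection is described in one chart $W_{i_0}$ by conditions that are trivial in the blocks containing $J$ and defined by translated graphs in the remaining blocks. I then exhibit a biholomorphism with $\C^D$ that straightens the translations by the shear $(z,w)\mapsto(z,w-g(z))$; failing that, I would show the intersection is at least contractible.

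With the nerve and good-cover property established, the nerve lemma \cite[Cor.~4G.3]{Hat02} gives $M_L\simeq|L|$. Each point lies in some $W_i\cong\C^D$, so $M_L$ is strongly dominable and hence Oka by Theorem~\ref{thm:strong-dom}. The transition maps are translations in the fibre that depend holomorphically on the base, so their Jacobian determinant is identically $1$. Hence the forms $dz_1\wedge dz_2\wedge\bigwedge dw$ glue to a nowhere-vanishing holomorphic volume form, and the canonical bundle is holomorphically trivial. For the tangent bundle, the Jacobians of the transition maps are unipotent, block lower triangular with identity diagonal. They can therefore be deformed to the identity continuously, by scaling the off-diagonal part by $s\in[0,1]$, compatibly with the cocycle condition, since the cocycle is linear in the $dg$ terms. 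This gives a topological trivialization of the complex tangent bundle. Alternatively, the base projection to $\C^2$ is a global map, and the fibre directions are framed by the translation-invariant vector fields $\partial/\partial w$.
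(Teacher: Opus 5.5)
\textbf{There is a genuine gap: the construction cannot be Hausdorff, and the gluing loci are never actually specified.}

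Every transition map you propose, $(z,w)\mapsto(z,(w_k+g_{i,k}(z)-g_{j,k}(z))_k)$, is the restriction of a biholomorphism $\Phi_{ij}$ of all of $\C^D$, because the $g_{i,k}$ are entire on the whole base $\C^2$. Suppose the region $W_{ij}\subset W_i$ being glued is a nonempty proper open subset. Pick $x_0\in\partial W_{ij}$. The classes of $x_0\in W_i$ and $\Phi_{ij}(x_0)\in W_j$ are distinct, since $x_0\notin W_{ij}$. But points $x\in W_{ij}$ near $x_0$ are identified with $\Phi_{ij}(x)$, which lie near $\Phi_{ij}(x_0)$, so these two classes have no disjoint neighbourhoods. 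Equivalently, the graph of $\Phi_{ij}|_{W_{ij}}$ is closed in $W_i\times W_j$ only if $W_{ij}$ is clopen.

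Forcing every nonempty overlap to be a whole chart gives $W_i=W_j$ along edges. The nerve is then a disjoint union of full simplices, so your scheme already fails for a path on three vertices. Properness of $g_{i,k}-g_{j,k}$ on all of $\C^2$ does not help. A convergent sequence already has bounded base coordinates; what is needed is that the translation blow up at the \emph{boundary of the gluing region}, not at infinity in the base. (Also, intersections described by ``translated graphs'' would be closed analytic sets, not open sets, so the good-cover step is not coherent either.)

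\textbf{The missing idea is to encode $L$ in the base rather than in the fibre.} The paper proceeds as follows.
\begin{itemize}
\item Lemma~\ref{nerve:convex} gives bounded convex open sets $U_1,\dots,U_v\subset\C^s$ with nerve exactly $L$. These are the open $\epsilon$-neighbourhoods of the faces $C_i=\operatorname{conv}\{e_a:i\in\sigma_a\}$ of the standard simplex.
\item The base is $\C^s$ and the fibre is $\C^{2q}$, with $q=2s+1$.
\item $V_i=\C^s\times\C^{2q}$ is glued to $V_j$ only over $(U_i\cap U_j)\times\C^{2q}$, by $w\mapsto w+H_i(z)-H_j(z)$. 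Here $H_i=(h_i,\lambda_ih_i)$, the maps $h_i:U_i\hookrightarrow\C^q$ are proper embeddings, and the constants $\lambda_i$ are distinct.
\item The linear recovery formula \eqref{nerve:recovery} makes $H_i-H_j$ proper on $U_i\cap U_j$. So the translation escapes to infinity at the boundary of the overlap, and its graph is closed in $\C^s\times\C^{2q}$.
\item Hence the equivalence relation is closed and the quotient is Hausdorff (Proposition~\ref{nerve:gluing}).
\item The intersections are $U_I\times\C^{4s+2}$, which are convex. This gives a good cover with nerve $L$ in dimension $s+2q=5s+2$.
\end{itemize}
Your arguments for the Oka property (strong dominability), the nerve lemma, the canonical bundle and the tangent bundle are fine once such a manifold exists, and they agree with the paper's.
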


Lemma~\ref{nerve:convex} gives convex open sets $U_i\subset\C^s$
with nerve $L$. Choose proper holomorphic embeddings
$h_i:U_i\hookrightarrow\C^q$, where $q=2s+1$, and distinct constants
$\lambda_i\in\C$, and put $H_i=(h_i,\lambda_i h_i)$.
The transition maps on $(U_i\cap U_j)\times\C^{2q}$ are $(z,w)\mapsto(z,w+H_i(z)-H_j(z))$.
The formulas in \eqref{nerve:recovery} show that $H_i-H_j$ is proper
on $U_i\cap U_j$, which implies that the quotient is Hausdorff.

\subsection{A Hausdorff gluing construction}

\begin{proposition}\label{nerve:gluing}
Let $n\ge1$ and let $\{U_i\}_{i\in J}$ be a nonempty finite or countable
family of nonempty convex open subsets of $\C^n$. Put $D=5n+2$.
There is an Oka manifold $M$ with a good open cover
$\{W_i\}_{i\in J}$ having the same nerve as $\{U_i\}_{i\in J}$.
With $\C^D=\C^n\times\C^{4n+2}$, there are biholomorphisms
$\theta_i:\C^D\to W_i$ such that, for every finite $I\subseteq J$
with $|I|\ge2$ and every $i\in I$,
\[
 \theta_i^{-1}\left(\bigcap_{j\in I}W_j\right)
 =U_I\times\C^{4n+2},\qquad U_I=\bigcap_{j\in I}U_j.
\]
\end{proposition}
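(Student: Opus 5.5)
The manifold $M$ will be obtained by gluing copies of $\C^D=\C^n\times\C^{2q}$, with $q=2n+1$, along fibre translations, as in the sketch after Theorem~\ref{thm:prescribed-nerve}. Each convex open set $U_i\subset\C^n$ is Stein, so I choose a proper holomorphic embedding $h_i:U_i\hookrightarrow\C^q$; dimension $q=2n+1$ suffices. I also fix distinct constants $\lambda_i\in\C$ and put $H_i=(h_i,\lambda_ih_i):U_i\to\C^{2q}$.

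Take the disjoint union of the spaces $\C^n\times\C^{2q}\times\{i\}$. Identify $(z,w,j)$ with $(z,w+H_i(z)-H_j(z),i)$ for $z\in U_i\cap U_j$. The cocycle condition holds on triple overlaps because the transition maps are translations by differences $H_i-H_j$. Note that only the points with $z\in U_i\cap U_j$ are glued, so $z$ ranges over all of $\C^n$ in each chart but gluing happens only over the overlaps. The images $W_i$ of the charts are open, and $\theta_i$ is the induced map, which is injective. Then $\theta_i^{-1}(\bigcap_{j\in I}W_j)=U_I\times\C^{2q}$ by construction, so finite intersections of the $W_j$ are biholomorphic to $U_I\times\C^{2q}$. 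These are convex products, hence contractible, and they are nonempty exactly when $U_I\neq\varnothing$. This makes the cover good and gives it the same nerve as $\{U_i\}$. Once Hausdorffness is known, $M$ is second countable because $J$ is countable.

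The Oka property comes from Theorem~\ref{thm:strong-dom}. Each point lies in some $W_i\cong\C^D$, so $M$ is strongly dominable.

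The main obstacle is showing that $M$ is Hausdorff. For points in distinct charts $i\neq j$, the only failure could come from a sequence $z_k\in U_i\cap U_j$ with $z_k\to z\in\partial(U_i\cap U_j)$ and $w_k+H_i(z_k)-H_j(z_k)$ convergent while $w_k$ also converges. Ruling this out requires $H_i-H_j=(h_i-h_j,\lambda_ih_i-\lambda_jh_j)$ to be proper on $U_i\cap U_j$. The key formulas recover $h_i$ and $h_j$ linearly from the two components:
\[
h_i=\frac{(\lambda_ih_i-\lambda_jh_j)-\lambda_j(h_i-h_j)}{\lambda_i-\lambda_j},\qquad h_j=\frac{(\lambda_ih_i-\lambda_jh_j)-\lambda_i(h_i-h_j)}{\lambda_i-\lambda_j}.
\]
So if $H_i-H_j$ stays bounded along $z_k$, then $h_i(z_k)$ and $h_j(z_k)$ stay bounded. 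Properness of $h_i$ and $h_j$ then keeps $z_k$ in compact subsets of $U_i$ and of $U_j$. Hence the limit $z$ lies in $U_i\cap U_j$, and the two limit points are identified, as required.

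The same argument shows that the graph of the equivalence relation is closed. Since the relation is induced by an open gluing, a closed graph gives Hausdorffness. Points in a single chart are separated trivially.
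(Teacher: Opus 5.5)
Your proposal is correct and follows the paper's proof essentially step for step. It uses the same maps $H_i=(h_i,\lambda_i h_i)$ into $\C^{2q}$ with $q=2n+1$ and the same fibre-translation gluing of copies of $\C^n\times\C^{2q}$. It recovers $h_i$ and $h_j$ linearly from $H_i-H_j$ in the same way, which gives properness on $U_i\cap U_j$, hence a closed relation and a Hausdorff quotient, and it gets the Oka property from strong dominability. The only step you assert rather than check is that the quotient map is open; the paper verifies this by writing the saturation of an open set as a union of its images under the transition maps $\Phi_{ij}$.
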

\begin{proof}
Each $U_i$ is Stein. By Narasimhan's embedding theorem
(see \cite[\S3.1, p.~781]{For13}), choose a proper holomorphic embedding
\[
 h_i:U_i\hookrightarrow\C^q,\qquad q=2n+1.
\]
Choose pairwise distinct constants $\lambda_i\in\C$, and put
\[
 H_i(z)=(h_i(z),\lambda_i h_i(z))\in\C^{2q}.
\]
For $i\ne j$, let $F_{ij}=H_i-H_j$ on $U_i\cap U_j$.
If $F_{ij}(z)=(a,b)$, then
\begin{equation}\label{nerve:recovery}
 h_i(z)=\frac{b-\lambda_j a}{\lambda_i-\lambda_j},
 \qquad
 h_j(z)=\frac{b-\lambda_i a}{\lambda_i-\lambda_j}.
\end{equation}
For a compact set $K\subset\C^{2q}$, let $K_i,K_j\subset\C^q$ be its
images under the two linear maps in \eqref{nerve:recovery}.
Then $F_{ij}^{-1}(K)$ is a closed subset of
$h_i^{-1}(K_i)\cap h_j^{-1}(K_j)$, which is a compact subset of
$U_i\cap U_j$. Hence $F_{ij}$ is proper.
In particular, the graph of $F_{ij}$ is closed in
$\C^n\times\C^{2q}$. Indeed, the second coordinates of a convergent sequence in the graph,
together with their limit, form a compact set. Properness places
the first coordinates in its compact inverse image in $U_i\cap U_j$,
so the limit remains in the graph.

Take the disjoint union
\[
 V=\coprod_{i\in J}V_i,
 \qquad V_i=\C^n\times\C^{2q}\cong\C^D.
\]
Points in the same $V_i$ are equivalent only when equal. For $i\ne j$,
impose the relation
\begin{equation}\label{nerve:relation}
 (z,w)_i\sim(z',w')_j
 \quad\Longleftrightarrow\quad
 z=z'\in U_i\cap U_j,
 \quad w'=w+H_i(z)-H_j(z).
\end{equation}
Symmetry follows by reversing the difference. For transitivity,
consider points in $V_i,V_j,V_k$ with successive pairs equivalent.
If $i=j$ or $j=k$, transitivity is immediate. Otherwise their common
first coordinate belongs to $U_i\cap U_k$, and
$(H_i-H_j)+(H_j-H_k)=H_i-H_k$. If $k=i$, the differences cancel and the two points in
$V_i$ are equal. Thus \eqref{nerve:relation} is an equivalence relation.

Let $M=V/{\sim}$ and let $Q:V\to M$ be the quotient map. The transition
on $(U_i\cap U_j)\times\C^{2q}$ is the biholomorphism
\begin{equation}\label{nerve:transition}
 \Phi_{ij}(z,w)=(z,w+H_i(z)-H_j(z)).
\end{equation}
It has inverse $\Phi_{ji}$, and
$\Phi_{jk}\circ\Phi_{ij}=\Phi_{ik}$ on triple overlaps.
For an open set $O\subset V$, put $O_i=O\cap V_i$. The intersection of
its saturation with $V_j$ is
\[
 Q^{-1}(Q(O))\cap V_j
 =O_j\cup\bigcup_{i\ne j}
 \Phi_{ij}\bigl(O_i\cap((U_i\cap U_j)\times\C^{2q})\bigr),
\]
which is open. Hence $Q$ is open.

The equivalence relation is closed in $V\times V$. On $V_i\times V_i$
it is the diagonal. On $V_i\times V_j$ with $i\ne j$, it is given by
$z=z'$ and $(z,w'-w)\in\operatorname{Graph}(F_{ij})$, so it is closed
by the graph property above. The sets $V_i\times V_j$ form a disjoint open cover of $V\times V$, so the equivalence relation is closed in $V\times V$.
To separate two distinct classes, choose representatives $x,y\in V$.
Closedness of the relation gives neighborhoods $O\ni x$ and $O'\ni y$
such that no point of $O$ is equivalent to a point of $O'$. Then $Q(O)$ and $Q(O')$ are
disjoint open neighborhoods of the two classes. Thus $M$ is Hausdorff.

Each restriction $Q|_{V_i}$ is an open injection, hence a homeomorphism
onto the open set $W_i=Q(V_i)$. The transitions
\eqref{nerve:transition} give $M$ a complex manifold structure.
Images of the countable bases of the $V_i$ form a countable base for
$M$. Thus $M$ is second countable and paracompact.
Put $\theta_i=Q|_{V_i}$, identifying $V_i$ with $\C^D$.
For $|I|\ge2$ and $i\in I$, relation \eqref{nerve:relation} gives
$\theta_i^{-1}(\bigcap_{j\in I}W_j)=U_I\times\C^{2q}$.
Every nonempty $U_I$ is convex, so $(W_i)_{i\in J}$ is a good open
cover with the same nerve as $(U_i)_{i\in J}$.

For $y=Q((a)_i)\in W_i$, the map $f:\C^D\to M$,
$f(\zeta)=Q((a+\zeta)_i)$, satisfies $f(0)=y$ and has invertible
differential at $0$. Thus $M$ is strongly dominable and is Oka by
Theorem~\ref{thm:strong-dom}.
\end{proof}

\begin{proposition}\label{nerve:bundles}
Every manifold constructed in Proposition~\ref{nerve:gluing} has
holomorphically trivial canonical bundle and topologically trivial
complex tangent bundle.
\end{proposition}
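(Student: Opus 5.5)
The plan is to read off both bundles from the explicit transition maps \eqref{nerve:transition}. On each chart $W_i$ we have the holomorphic coordinates $\theta_i^{-1}=(z,w)$. On an overlap the transition $\Phi_{ij}(z,w)=(z,w+H_i(z)-H_j(z))$ has Jacobian
\[
d\Phi_{ij}=\begin{pmatrix} I_n & 0\\ dF_{ij}(z) & I_{2q}\end{pmatrix},
\]
which is block unipotent, so $\det d\Phi_{ij}\equiv1$. Consequently the local holomorphic volume forms $\omega_i=(\theta_i^{-1})^*(dz_1\wedge\dots\wedge dz_n\wedge dw_1\wedge\dots\wedge dw_{2q})$ agree on overlaps, because $\Phi_{ij}^*\omega_j=\omega_i$. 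They therefore glue to a nowhere-vanishing global holomorphic section of $K_M$, and $K_M$ is holomorphically trivial.

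For the tangent bundle, the $z$-coordinates are preserved by every transition, so the $z$-coordinate vector fields are not intrinsic but the quotient structure is. More precisely, consider the holomorphic subbundle $E\subset TM$ spanned by the vector fields $\partial/\partial w_k$. Since $\Phi_{ij}$ is a translation in $w$ depending only on $z$, we get $d\Phi_{ij}(\partial_{w_k})=\partial_{w_k}$. Hence the fields $\partial_{w_1},\dots,\partial_{w_{2q}}$ are globally defined holomorphic vector fields, and $E\cong\mathcal O^{2q}$ is holomorphically trivial. The quotient $TM/E$ has transition matrices equal to the upper-left block $I_n$, so it is also holomorphically trivial: the classes of $\partial_{z_1},\dots,\partial_{z_n}$ are global frames. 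This gives a short exact sequence $0\to\mathcal O^{2q}\to TM\to\mathcal O^n\to0$ of holomorphic vector bundles.

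Topologically, every short exact sequence of complex vector bundles over a paracompact space splits, for example using a Hermitian metric and taking the orthogonal complement of $E$. The splitting yields $TM\cong\mathcal O^{2q}\oplus\mathcal O^n$ as complex vector bundles, which is trivial of rank $D$. Alternatively one can write down a smooth splitting explicitly. Choose a smooth partition of unity $\{\rho_i\}$ subordinate to $\{W_i\}$ and define the global smooth lift $\tilde\partial_{z_l}=\sum_i\rho_i\,\partial_{z_l}^{(i)}$. Together with the $\partial_{w_k}$ this gives a global smooth frame, since each $\tilde\partial_{z_l}$ differs from a chart $\partial_{z_l}$ by a section of $E$.

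There is no real obstacle. The only points needing care are, first, that the transitions preserve the projection to $z$ without $M$ globally fibring over a single $\C^n$; the argument uses only the local cocycle form, so this does not matter. Second, paracompactness of $M$ is needed for the splitting, and it was established in Proposition~\ref{nerve:gluing}.
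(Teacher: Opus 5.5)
Your proof is correct and follows essentially the same route as the paper. The unipotent Jacobians of $\Phi_{ij}$ glue the standard volume forms, the globally defined fields $\partial/\partial w_k$ span a trivial subbundle with trivial quotient, and a Hermitian metric splits the resulting sequence topologically. The only difference is cosmetic: the paper identifies the quotient with $M\times\C^n$ through $dp$, where $p:M\to\C^n$, $p(Q((z,w)_i))=z$, is a global holomorphic submersion (well defined because the transitions preserve $z$, although $p$ is not a fibre bundle), whereas you note directly that the induced transitions on $TM/E$ are the identity.
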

\begin{proof}
Retain the notation of the construction in
Proposition~\ref{nerve:gluing}, with $q=2n+1$.
The differentials of the transitions \eqref{nerve:transition} are
\[
 D\Phi_{ij}=
 \begin{pmatrix}
 I_n&0\\
 dH_i-dH_j&I_{2q}
 \end{pmatrix}.
\]
Their determinants equal $1$, so the standard holomorphic volume forms
$dz_1\wedge\cdots\wedge dz_n\wedge dw_1\wedge\cdots\wedge dw_{2q}$
agree on overlaps and trivialize the canonical bundle.
Since the transition maps preserve $z$, the formula
$p(Q((z,w)_i))=z$ gives a holomorphic submersion $p:M\to\C^n$.
The vertical vector fields
$\partial/\partial w_1,\ldots,\partial/\partial w_{2q}$ also agree on overlaps.
There is therefore an exact sequence of holomorphic vector bundles
\[
 0\longrightarrow M\times\C^{2q}\longrightarrow TM
 \xrightarrow{\,dp\,}M\times\C^n\longrightarrow0.
\]
Choose a continuous Hermitian metric on $TM$.
The orthogonal complement of $\ker dp$ is a continuous complex
subbundle, and $dp$ restricts to an isomorphism from this complement
onto $M\times\C^n$. Thus the sequence splits as a sequence of
topological complex vector bundles.
Both outer bundles are trivial, hence
$TM\cong M\times\C^{n+2q}$ as a topological complex vector bundle.
\end{proof}

\subsection{Finite nerves}

\begin{lemma}\label{nerve:convex}
Let $L$ be a finite simplicial complex with $v\ge1$ vertices and
$s$ maximal simplices. There are bounded convex open sets
$U_1,\ldots,U_v\subset\C^s$ whose nerve is $L$.
\end{lemma}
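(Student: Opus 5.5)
Label the maximal simplices $\sigma_1,\dots,\sigma_s$ of $L$ and use one real coordinate $x_k=\operatorname{Re} z_k$ of $\C^s$ for each $\sigma_k$. For a vertex $i$, define
\[
U_i=\bigl\{z\in\C^s:\ \|z\|<R,\ x_k>0 \text{ for all } k \text{ with } i\notin\sigma_k\bigr\}\cap\bigl\{z:\ \textstyle\sum_{k:\,i\notin\sigma_k}\ \cdots\bigr\},
\]
but the simplest version I would try first is the intersection of half-spaces: put
\[
U_i=\Bigl\{z\in\C^s:\ \|z\|<R,\ \ x_k<1 \text{ if } i\in\sigma_k,\ \ x_k>1 \text{ if } i\notin\sigma_k \text{ for a designated subset}\Bigr\}.
\]
The goal is to choose the half-spaces so that the following holds. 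A set $I$ of vertices has $\bigcap_{i\in I}U_i\neq\varnothing$ exactly when $I\subseteq\sigma_k$ for some $k$, that is, exactly when $I\in L$. Each $U_i$ is an intersection of a ball with open half-spaces, so it is bounded and convex.

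The concrete choice I would commit to uses, for each vertex $i$, the linear functional
\[
\ell_i(z)=\sum_{k:\,i\in\sigma_k}x_k ,
\]
and sets
\[
U_i=\{z:\ \|z\|<R,\ \ell_i(z)>1/2\}\cap\{z:\ x_k>0\ \forall k\}\cap\{z:\ \textstyle\sum_k x_k<1\}.
\]
For the easy direction, suppose $I\subseteq\sigma_k$. Any point with $x_k$ close to $1$, the other $x_j$ small positive, and imaginary parts zero satisfies $\ell_i>1/2$ for all $i\in I$, so $\bigcap_{i\in I}U_i\ne\varnothing$.

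For the converse, take a point in $\bigcap_{i\in I}U_i$. Its coordinates $x_k$ lie in the open simplex region and sum to less than $1$. Each $i\in I$ then requires more than half of this mass on the simplices containing $i$. The main obstacle is exactly this step: threshold $1/2$ does not by itself force a single common $\sigma_k$ containing all of $I$. Two vertices could share mass only through different simplices, for instance if the mass is split among $\sigma_1\ni i,j$ and others, or three vertices are pairwise in maximal simplices but not jointly (a hollow triangle).

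To fix this, I would instead place the maximal simplices at the vertices $e_1,\dots,e_s$ of a simplex in $\R^s\subset\C^s$. I then define $U_i$ as a thin open convex neighbourhood of the convex hull
\[
P_i=\operatorname{conv}\{e_k:\ i\in\sigma_k\}.
\]
For a subset $I$, the sets $P_i$ ($i\in I$) are faces of the standard simplex. Their intersection is the face spanned by $\{e_k: I\subseteq\sigma_k\}$, which is nonempty iff $I\in L$. If $I\notin L$, the compact faces $P_i$ have empty common intersection. Taking $U_i$ to be the $\varepsilon$-neighbourhood of $P_i$ intersected with a ball, this emptiness persists for small $\varepsilon$ by compactness: the function $\max_{i\in I}\operatorname{dist}(\cdot,P_i)$ has a positive minimum. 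Finitely many $I$ need checking, so one $\varepsilon>0$ works for all of them. If $I\in L$, the common face lies in every $U_i$, $i\in I$. An $\varepsilon$-neighbourhood of a convex set is convex and open, and it is bounded because $P_i$ is compact. Every vertex lies in some maximal simplex, so $P_i\ne\varnothing$. This gives the lemma, with the compactness choice of $\varepsilon$ being the only nontrivial point.
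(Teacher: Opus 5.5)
Your final construction, the $\varepsilon$-neighbourhoods of the faces $\operatorname{conv}\{e_k: i\in\sigma_k\}$ of the standard simplex with $\varepsilon$ below the positive minima of $\max_{i\in I}\operatorname{dist}(\cdot,P_i)$ over the finitely many non-simplices $I$, is correct and is essentially the paper's proof. In a write-up, drop the preliminary half-space and threshold-$1/2$ attempts: as you yourself observe, they do not work, and the final argument does not use them.
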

\begin{proof}
We represent the vertices of $L$ by faces of a simplex and then replace each face by its open $\epsilon$-neighbourhood; compare \cite[Definition~3.1 and proofs of Theorems~3.3
and~3.14]{White23}. Label the vertices by $1,\ldots,v$ and the maximal
simplices by $\sigma_1,\ldots,\sigma_s$. With $e_1,\ldots,e_s$ the
standard basis of $\R^s$, put
\[
 C_i=\operatorname{conv}\{e_a:i\in\sigma_a\}.
\]
These are nonempty compact faces of the standard simplex. Uniqueness
of barycentric coordinates gives, for each nonempty vertex set $I$,
\[
 \bigcap_{i\in I}C_i
 =\operatorname{conv}\{e_a:I\subset\sigma_a\},
\]
where the convex hull of the empty set is empty. This intersection is
nonempty exactly when $I$ is a simplex of $L$.

Regard $\R^s$ as the standard real subspace of $\C^s$. For a nonempty vertex set $I$ that is not a simplex of $L$, the continuous function
\[
 z\longmapsto\max_{i\in I}\operatorname{dist}(z,C_i)
\]
tends to infinity as $|z|\to\infty$ and has a strictly positive minimum
$\delta_I$. Choose $\epsilon>0$ smaller than all these finitely many
minima; if there are no nonfaces, choose any $\epsilon>0$.
The sets
\[
 U_i=\{z\in\C^s:\operatorname{dist}(z,C_i)<\epsilon\}
\]
are bounded, convex and open. If $I$ is a simplex of $L$, then
$\bigcap_{i\in I}C_i\subset\bigcap_{i\in I}U_i$ is nonempty.
If $I$ is not a simplex, then $\bigcap_{i\in I}U_i=\varnothing$ by the
choice of $\epsilon$. Thus the nerve is $L$.
\end{proof}

\begin{proof}[Proof of Theorem~\ref{thm:prescribed-nerve}]
Apply Lemma~\ref{nerve:convex} and Proposition~\ref{nerve:gluing} with
$n=s$. The resulting good open cover has $v$ members and nerve exactly
$L$, and the manifold has complex dimension $5s+2$.
The nerve lemma~\cite[Corollary~4G.3]{Hat02} gives $M_L\simeq|L|$.
Proposition~\ref{nerve:bundles} gives the two bundle assertions.
\end{proof}

\subsection{Countable covers with a linear dimension bound}

\begin{proposition}\label{nerve:open-model}
Let $U\subset\C^n$ be a nonempty open set, where $n\ge1$.
There is an Oka manifold $M_U\simeq U$ of complex dimension $5n+2$
with a countable good open cover by open sets biholomorphic to $\C^{5n+2}$.
It has holomorphically trivial canonical bundle and topologically
trivial complex tangent bundle.
\end{proposition}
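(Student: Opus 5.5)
The plan is to reduce to Proposition~\ref{nerve:gluing} by covering $U$ with convex open sets whose nonempty finite intersections are again convex. Such intersections are contractible, so the cover is good. I choose a countable, locally finite cover $\{U_i\}_{i\in J}$ of $U$ by open Euclidean balls (or open cubes) contained in $U$. Every finite intersection of these sets is convex, and hence either empty or contractible. Since $U$ is second countable, such a countable cover exists. For instance, I can take balls centred at points of a countable dense set, with radii chosen along a compact exhaustion so that the cover is locally finite. Local finiteness is not strictly needed for the nerve lemma, because $U$ is paracompact, but it does no harm.

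Next I apply Proposition~\ref{nerve:gluing} to this family with $n$ as given. This produces an Oka manifold $M=M_U$ of dimension $D=5n+2$ with a good open cover $\{W_i\}_{i\in J}$. Each $W_i$ is biholomorphic to $\C^D$ via $\theta_i$, and the nerve of $\{W_i\}$ agrees with the nerve of $\{U_i\}$. Since the cover is indexed by $J$, it is countable. I apply the nerve lemma \cite[Cor.~4G.3]{Hat02} twice. The space $M$ is paracompact, as shown in the proof of Proposition~\ref{nerve:gluing}, so $M\simeq|N|$, where $N$ is the common nerve. The space $U$ is an open subset of $\C^n$, hence paracompact, and its cover is good, so $U\simeq|N|$. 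Together these give $M_U\simeq U$. The bundle assertions then follow directly from Proposition~\ref{nerve:bundles}.

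There is one subtle point, and I expect it to be the only real obstacle: the goodness condition supplied by Proposition~\ref{nerve:gluing} must be exactly what the nerve lemma needs. Pairwise and higher intersections are of the form $\theta_i(U_I\times\C^{4n+2})$ with $U_I$ convex, so they are contractible. Single members are copies of $\C^D$, also contractible. Empty intersections are excluded by definition. For the nerve lemma in Hatcher's form I also need a paracompact space with an open cover whose finite intersections are contractible or empty, and this holds in both cases. So no further work is needed beyond choosing the convex cover.
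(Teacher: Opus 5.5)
Your proposal is correct and follows the paper's proof: cover $U$ by countably many Euclidean balls contained in $U$, apply Proposition~\ref{nerve:gluing}, use the nerve lemma on the two paracompact spaces $U$ and $M_U$, and take the bundle assertions from Proposition~\ref{nerve:bundles}. As you yourself note, local finiteness of the ball cover is not needed.
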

\begin{proof}
Choose a countable cover of $U$ by Euclidean open balls contained in
$U$. Every nonempty finite intersection is convex.
Proposition~\ref{nerve:gluing} gives a manifold with a good cover having
the same nerve as the ball cover. Both $U$ and $M_U$ are paracompact,
so the nerve lemma~\cite[Corollary~4G.3]{Hat02} gives $M_U\simeq U$.
Proposition~\ref{nerve:bundles} gives the bundle assertions.
\end{proof}

\begin{corollary}\label{nerve:countable}
Every nonempty countable CW complex $X$ of finite dimension $d$ has an
Oka realization $M$ of complex dimension $5d+7$ with a countable good
open cover by sets biholomorphic to $\C^{5d+7}$. The manifold $M$ has
holomorphically trivial canonical bundle and topologically trivial
complex tangent bundle.
\end{corollary}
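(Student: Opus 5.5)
The plan is to reduce to Proposition~\ref{nerve:open-model} with $n=d+1$, since $5(d+1)+2=5d+7$. This requires a nonempty open subset $U\subset\C^{d+1}$ with $U\simeq X$. By Lemma~\ref{lem:topological-open}, there is a nonempty open set $V\subset\R^{2d+1}$ homotopy equivalent to $X$. We then put
\[
 U=V\times\R\subset\R^{2d+2}=\C^{d+1},
\]
which is open, nonempty, and homotopy equivalent to $V$ because the factor $\R$ is contractible.

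Applying Proposition~\ref{nerve:open-model} to this $U$ with $n=d+1\ge1$ gives an Oka manifold $M=M_U$ of complex dimension $5(d+1)+2=5d+7$. It has a countable good open cover by sets biholomorphic to $\C^{5d+7}$, and $M\simeq U\simeq V\simeq X$. The same proposition already provides the statements about the canonical bundle and the tangent bundle, so nothing further is needed for those.

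The only point needing care is the edge case $d=0$. Here $X$ is a countable discrete set and $V\subset\R$ is a countable disjoint union of open intervals. The construction above still applies with $n=1$, giving dimension $7=5\cdot0+7$. The condition $n\ge1$ in Proposition~\ref{nerve:open-model} is then satisfied, and the ball cover in its proof remains countable. I expect no real obstacle: the substantive work, namely the Hausdorff gluing and the nerve lemma, is contained in Proposition~\ref{nerve:gluing} and Proposition~\ref{nerve:open-model}. The step to check is only that the parity padding from $\R^{2d+1}$ to $\C^{d+1}$ is homotopically harmless, and it is.
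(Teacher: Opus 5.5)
Your proposal is correct and follows the paper's proof: pad the open set $V\subset\R^{2d+1}$ from Lemma~\ref{lem:topological-open} to $U=V\times\R\subset\R^{2d+2}=\C^{d+1}$, then apply Proposition~\ref{nerve:open-model} with $n=d+1$ to get dimension $5(d+1)+2=5d+7$ together with the cover and bundle properties. Your extra check of the case $d=0$ is consistent with this and needs nothing further.
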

\begin{proof}
Take $V\subset\R^{2d+1}$ with $V\simeq X$ from
Lemma~\ref{lem:topological-open}. Apply
Proposition~\ref{nerve:open-model} to
$U=V\times\R\subset\R^{2d+2}=\C^{d+1}$.
The resulting manifold has dimension $5(d+1)+2=5d+7$ and the stated cover and
bundle properties.
\end{proof}

\section{Homotopy types and holomorphic representatives}\label{sec:applications}

We write $[X,T]$ for homotopy classes of continuous maps $X\to T$. For $d\ge1$, a map is \emph{$d$-connected} if it induces a bijection on $\pi_0$ and, at every base point, isomorphisms on $\pi_i$ for $0<i<d$ and a surjection on $\pi_d$.

\subsection{Moore and Eilenberg--MacLane spaces}

By Theorem~\ref{intro:countable}, realizing a homotopy type reduces to finding a countable finite-dimensional CW model.

A Moore space of type $(G,n)$ is a connected CW complex with reduced
integral homology $G$ in degree $n$ and zero in all other degrees;
for $n\ge2$ it is also required to be simply connected. For $n\ge2$,
its homotopy type is determined by $G$ and $n$
\cite[Example~4.34]{Hat02}. For $n=1$, $M(G,1)$
denotes the CW complex constructed in the following proof, with its
chosen attaching maps.

\begin{corollary}[Moore spaces]\label{cor:all-moore}
For every countable abelian group $G$ and $n\ge1$, the Moore space $M(G,n)$ obtained from a countable free resolution of $G$ has an Oka realization.
\end{corollary}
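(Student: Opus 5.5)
The plan is to build $M(G,n)$ as an explicit countable CW complex of dimension at most $n+1$ and then apply Theorem~\ref{intro:countable}. Since $G$ is a countable abelian group, there is a free resolution $0\to F_1\xrightarrow{\ \partial\ }F_0\to G\to0$ in which $F_0$ and $F_1$ are free abelian groups of countable rank. To get it, take $F_0$ free on a countable generating set of $G$; then the kernel $F_1$ is a subgroup of a free abelian group, so it is free, and it has countable rank.

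For $n\ge2$ the model is the standard one \cite[Example~2.40]{Hat02}. Let $X^n=\bigvee_{\alpha}S^n$ be a wedge of spheres indexed by a basis of $F_0$, so $\pi_n(X^n)\cong H_n(X^n)\cong F_0$ by Hurewicz. For each basis element $\beta$ of $F_1$, attach an $(n+1)$-cell by a map $S^n\to X^n$ representing $\partial(\beta)\in\pi_n(X^n)$. The cellular chain complex is then $F_1\xrightarrow{\partial}F_0$ in degrees $n+1,n$. Since $\partial$ is injective, $\tilde H_*$ is $G$ in degree $n$ and vanishes elsewhere. The complex is simply connected because its $2$-skeleton is a point or a wedge of $2$-spheres (for $n=2$), with $3$-cells added. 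Thus it is a Moore space of type $(G,n)$.

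For $n=1$, we follow the convention in the text. Take $X^1=\bigvee_\alpha S^1$, with $\pi_1$ free on the basis of $F_0$. For each basis element $\beta$ of $F_1$, write $\partial\beta$ as a word that is a product of commutator-free powers of generators in some fixed order, and attach a $2$-cell along that loop. The cellular boundary map is then $\partial$ after abelianization. Injectivity of $\partial$ gives $H_1\cong G$ and $H_2=0$. These chosen attaching maps define $M(G,1)$.

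In every case the complex has countably many cells and dimension $n+1$. Theorem~\ref{intro:countable} therefore gives an Oka realization, for instance an open subset of some $\C^N$ covered by Fatou--Bieberbach domains; alternatively, Theorem~\ref{thm:low-dimension} gives one in $\C^{N(n+1)}$. The only point needing care is the countability of $F_0$ and $F_1$, together with the homology computation, which follows directly from cellular homology. I expect no real obstacle.
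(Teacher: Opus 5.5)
Your proposal is correct and follows essentially the same route as the paper: a wedge of $n$-spheres indexed by a basis of $F_0$, one $(n+1)$-cell for each basis element of $F_1$ attached so as to induce $\partial$, cellular homology giving $\widetilde H_*=G$ concentrated in degree $n$, simple connectivity for $n\ge2$, and then Theorem~\ref{intro:countable}. The only slip is that the complex has dimension \emph{at most} $n+1$ (it is smaller when $F_1=0$), which does not affect the argument.
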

\begin{proof}
Choose a free resolution
\[
0\longrightarrow F_1\xrightarrow{\partial}F_0\longrightarrow G\longrightarrow0
\]
by countable free abelian groups. Start with a wedge of $n$-spheres
indexed by a basis of $F_0$, and attach one $(n+1)$-cell for each basis
element of $F_1$, with attaching maps inducing $\partial$ on $H_n$
\cite[Example~2.40]{Hat02}. Cellular homology gives
\[
\widetilde H_n(M(G,n);\Z)=\operatorname{coker}\partial=G,
\qquad H_{n+1}(M(G,n);\Z)=\ker\partial=0,
\]
and all other reduced homology groups vanish. For $n\ge2$, the
construction is simply connected. The complex is countable and has
dimension at most $n+1$, so Theorem~\ref{intro:countable} applies.
\end{proof}

\begin{corollary}[aspherical Oka manifolds]\label{cor:aspherical}
For a group $G$, $K(G,1)$ has an Oka realization if and only if $G$ is countable with $\operatorname{cd}_\Z G<\infty$.
\end{corollary}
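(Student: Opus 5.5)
By Theorem~\ref{intro:countable}, $K(G,1)$ has an Oka realization exactly when it is homotopy equivalent to a countable finite-dimensional CW complex. The plan is therefore purely topological. We show that $K(G,1)$ admits such a model if and only if $G$ is countable and $\operatorname{cd}_\Z G<\infty$.

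For necessity, suppose $K(G,1)\simeq T$ with $T$ a countable CW complex of dimension $d$. The fundamental group of a countable CW complex is countable, since it is a quotient of the fundamental group of the $2$-skeleton, which has countably many cells. Hence $G=\pi_1(T)$ is countable. The universal cover $\widetilde T$ is contractible, and its cellular chain complex is a free $\Z[G]$-resolution of $\Z$ of length $d$. Here $G$ acts freely and permutes the cells, so each chain group is free on orbit representatives. This gives $\operatorname{cd}_\Z G\le d<\infty$.

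For sufficiency, let $G$ be countable with $\operatorname{cd}_\Z G=n<\infty$. We invoke the Eilenberg--Ganea theorem: if $n\ge3$, there is a $K(G,1)$ CW complex of dimension $n$. Starting from the standard countable model, namely the bar construction or the geometric realization of the nerve of $G$, which has countably many cells in each dimension, the Eilenberg--Ganea procedure modifies the $n$-skeleton using a projective resolution. It only adds or replaces countably many cells, because $\Z[G]$ is countable and the relevant modules are countably generated. So one obtains a countable model of dimension $\max(n,3)$. In the low cases $n\le2$, the Eilenberg--Ganea argument still produces a countable $3$-dimensional $K(G,1)$. The Eilenberg--Ganea problem does not matter here, since we only need finite dimension. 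The case $n=0$ forces $G$ trivial, where a point works.

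The main obstacle is the careful bookkeeping that the Eilenberg--Ganea construction preserves countability. A cleaner route avoids this. First take a countable $K(G,1)$, denoted $K$. Its $(N+1)$-skeleton $K^{(N+1)}$, for $N=\max(n,3)$, has universal cover with chain complex exact except possibly at degree $N$ and above. Then the Eilenberg--Ganea argument shows that $H_N(\widetilde{K^{(N)}})$ is a projective, countably generated $\Z[G]$-module $P$. By the Eilenberg swindle, $P\oplus F$ is free for some countable free $F$. Wedge on countably many $N$-spheres and attach countably many $(N+1)$-cells to kill $\pi_N$. Then apply the standard trick of realizing a free module by cells, which yields an $N$-dimensional countable aspherical complex. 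Theorem~\ref{intro:countable} then gives the Oka realization.
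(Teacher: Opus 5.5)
Your proof is correct. The necessity half matches the paper's, with the free resolution coming from the universal cover written out. The sufficiency half takes a different route.

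\textbf{How the two routes differ.} The paper uses Eilenberg--Ganea and Stallings--Swan purely as black boxes. This produces some finite-dimensional $K(G,1)$, called $K$, with no control over its size. The paper then shrinks it: it builds an increasing union $A=\bigcup_j A_j$ of countable subcomplexes, starting with loops representing every element of $G$ and then adding null-homotopies for the countable kernels of $\pi_i(A_j)\to\pi_i(K)$. The inclusion $A\hookrightarrow K$ is then a weak equivalence, and Whitehead gives $A\simeq K$.

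You instead start from a countable model, the nerve of $G$, and rerun the Eilenberg--Ganea construction with countability built in. Your key observations are that the relevant syzygy is a submodule of a countable free $\Z[G]$-module, hence countable and countably generated, and that the swindle only needs a countably generated free complement.

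Your route is self-contained and gives an explicit dimension bound, at the cost of reopening the proof of Eilenberg--Ganea. The paper's closure argument is modular. It applies to any finite-dimensional model however it was obtained, and it preserves the dimension automatically because $A$ is a subcomplex of $K$.

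\textbf{A bookkeeping slip.} Wedging $N$-spheres onto $K^{(N)}$ and attaching $(N+1)$-cells along a $\Z[G]$-basis of the free module $\pi_N$ gives an aspherical complex of dimension $N+1$, not $N$. To get dimension $N$, run the argument on $K^{(N-1)}$ instead. There $H_{N-1}$ of the universal cover is projective because $\operatorname{cd}_\Z G\le N$, and wedging $(N-1)$-spheres leaves $\pi_1$ unchanged because $N-1\ge2$. Your sentence about exactness for $\widetilde{K^{(N+1)}}$ is also muddled, but it is not used. Since only finite dimension is needed, neither point affects the conclusion.
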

\begin{proof}
An Oka manifold has a countable finite-dimensional CW model by Theorem~\ref{intro:countable}, so an Oka realization of $K(G,1)$ forces $G$ to be countable with finite integral cohomological dimension. Conversely, if $\operatorname{cd}_\Z G<\infty$, then $K(G,1)$ has a finite-dimensional CW model by the Eilenberg--Ganea and Stallings--Swan theorems \cite[Thms.~F.3.2--F.3.3]{Dav08}. Let $K$ be such a finite-dimensional model. Choose a connected countable
subcomplex $A_0\subset K$ containing a base point and loops representing
all elements of $G$. Inductively, for every $i\ge1$, choose
null-homotopies in $K$ for representatives of the kernel of
$\pi_i(A_j)\to\pi_i(K)$, and choose a connected countable subcomplex $A_{j+1}\subset K$ containing
$A_j$ and the images of these null-homotopies. These choices are countable
because the homotopy groups of a countable CW complex are countable.
Then $A=\bigcup_j A_j$ is a countable subcomplex of $K$.
Every map from a sphere into $A$ has image in a finite subcomplex,
hence in some $A_j$, and every element of the kernel is killed in
$A_{j+1}$. The map $\pi_1(A)\to G$ is surjective by the choice of
$A_0$, and the higher homotopy groups of $K$ vanish. Thus
$A\hookrightarrow K$ induces isomorphisms on all homotopy groups.
Whitehead's theorem gives $A\simeq K$. Apply Theorem~\ref{intro:countable}.
\end{proof}

\begin{corollary}[higher Eilenberg--MacLane spaces]\label{cor:higher-em}
Let $G\ne0$ be an abelian group and $n\ge2$. The following are equivalent: $K(G,n)$ has an Oka realization; $K(G,n)$ is homotopy equivalent to a finite-dimensional CW complex; $G$ is a finite-dimensional rational vector space and $n$ is odd.
\end{corollary}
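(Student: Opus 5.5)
The plan is to prove the three implications in a cycle: Oka realization $\Rightarrow$ finite-dimensional CW model $\Rightarrow$ ($G$ a finite-dimensional rational vector space, $n$ odd) $\Rightarrow$ Oka realization. The first implication is immediate from the necessity part of Theorem~\ref{intro:countable}: an Oka manifold has the homotopy type of a countable finite-dimensional CW complex.

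For the second implication, let $K=K(G,n)$ with $n\ge2$ be homotopy equivalent to a CW complex of dimension $d$. Then $H^k(K;A)=0$ for all $k>d$ and all coefficient groups $A$. We then use known cohomology computations for Eilenberg--MacLane spaces.

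First, suppose $G$ is not uniquely divisible, i.e.\ $G\otimes\Z/p\ne0$ or $G$ has $p$-torsion for some prime $p$. Serre's computation of $H^*(K(\Z/p,n);\Z/p)$, together with its extensions to $K(\Z,n)$ and to general $G$, shows that $H^*(K(G,n);\Z/p)$ is nonzero in infinitely many degrees. Concretely, take the fundamental class $\iota$ in mod $p$ cohomology, or its reduction when $G$ has a $\Z$-summand mapping onto $\Z/p$. Iterated Steenrod operations then give nonzero classes in unbounded degrees, because the Serre--Cartan basis consists of admissible monomials of excess less than $n$. Rather than a case analysis, I would cite the classical theorem (Serre) that $K(G,n)$ for $n\ge2$ has infinite mod $p$ cohomological dimension whenever $G\otimes\Z_{(p)}\ne0$ as a group not uniquely $p$-divisible. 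Reducing to this statement is the main obstacle. The cleanest route is to find a nonzero homomorphism $G\to\Z/p$ or an injection $\Z/p\to G$, which realizes $K(\Z/p,n)$ as a retract-like factor on mod $p$ cohomology.

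If no homomorphism $G\to\Z/p$ or injection $\Z/p\to G$ is available, then $G$ is uniquely $p$-divisible for every $p$, i.e.\ $G$ is a rational vector space. In that case $H^*(K(G,n);\Q)$ is the free graded-commutative algebra on $\operatorname{Hom}(G,\Q)$ in degree $n$, and all mod $p$ cohomology vanishes. If $n$ is even, any nonzero $x\in H^n$ has $x^k\ne0$ for all $k$, which contradicts finite dimension. If $n$ is odd but $\dim_\Q G$ is infinite, the exterior algebra on the infinite-dimensional space $\operatorname{Hom}(G,\Q)$ is nonzero in every degree $kn$; here the rational cohomology is computed as the dual of $H_*(K(G,n);\Q)=\Lambda(G)$. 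This forces $G\cong\Q^r$ with $r<\infty$ and $n$ odd.

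For the third implication, let $G=\Q^r$ with $n$ odd. Then $K(\Q,n)$ is the rationalization of $S^n$, modelled by the mapping telescope of the degree maps $S^n\xrightarrow{2}S^n\xrightarrow{3}S^n\to\cdots$. This telescope is a countable CW complex of dimension $n+1$. For odd $n$, $\pi_*(S^n)\otimes\Q$ is concentrated in degree $n$, so the telescope has homotopy groups $\Q$ in degree $n$ and $0$ otherwise. The product of $r$ copies has dimension $r(n+1)$ and is a model for $K(\Q^r,n)$, so Theorem~\ref{intro:countable} gives an Oka realization.
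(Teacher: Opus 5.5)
Your first and third implications match the paper's. Your rational analysis is a valid substitute for the paper's retraction onto finite-dimensional subspaces $W\subset G$: powers of a degree-$n$ class for even $n$, and $H_*(K(G,n);\Q)=\Lambda(G)$ for odd $n$ with $\dim_\Q G=\infty$. Do drop the claim that $H^*(K(G,n);\Q)$ is free on $\operatorname{Hom}(G,\Q)$, which fails for infinite-dimensional $G$.

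The gap is the step forcing $G$ to be uniquely $p$-divisible for every prime $p$, which you flag as the main obstacle and then leave open. There is no classical theorem of Serre for arbitrary, non-finitely generated $G$ that you can simply cite. Your condition $G\otimes\Z_{(p)}\ne0$ is also the wrong one, since $G=\Q$ satisfies it. The proposed reduction fails as stated: neither a surjection $G\to\Z/p$ nor an injection $\Z/p\to G$ makes $K(\Z/p,n)$ a cohomological retract of $K(G,n)$. For $G=\Z$, pullback along $K(\Z,n)\to K(\Z/p,n)$ kills $\beta\iota$. For $G=\Z/p^2\supset p\Z/p^2\cong\Z/p$, restriction to $K(\Z/p,n)$ kills all of $H^n(K(G,n);\mathbb F_p)=\operatorname{Hom}(G,\mathbb F_p)$. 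So you must identify which classes survive.

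The paper avoids this with Miller's theorem. The pointed mapping space $\operatorname{map}_*(B\Z/p,K(G,n))$ is weakly contractible, so $H^1(B\Z/p;G)=\{g\in G:pg=0\}$ and $H^2(B\Z/p;G)=G/pG$ vanish. This gives unique divisibility in two lines for arbitrary $G$.

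If you want to stay with Serre--Cartan, you have to treat the two cases separately.
\begin{enumerate}[label=\textup{(\roman*)}]
\item If $G/pG\ne0$, pick $g$ mapping to a generator of $\Z/p$. Consider the composite $H^*(K(\Z/p,n);\mathbb F_p)\to H^*(K(G,n);\mathbb F_p)\to H^*(K(\langle g\rangle,n);\mathbb F_p)$. Its image is a subalgebra containing every $P^I\bar\iota$ with $I$ not ending in $\beta$, and it is unbounded by Serre--Cartan. Since the composite factors through $H^*(K(G,n);\mathbb F_p)$, that ring is unbounded too.
\item If $\Z/p\subset G$, use that $\operatorname{Ext}(G,\Z/p)\to\operatorname{Ext}(\Z/p,\Z/p)$ is onto, because $\operatorname{Ext}^2_\Z=0$. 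Some class $y\in H^{n+1}(K(G,n);\mathbb F_p)$ therefore restricts to $\beta\iota$. Suitable Steenrod operations and powers of $\beta\iota$ are nonzero in unbounded degrees, so the same operations on $y$ are nonzero.
\end{enumerate}
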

\begin{proof}
By Theorem~\ref{intro:countable}, an Oka realization gives a
countable finite-dimensional CW model. Suppose that $K(G,n)$ has a
finite-dimensional CW model. Miller's theorem \cite[Thm.~A]{Mil84}
implies that the space of pointed maps $B\Z/p\to K(G,n)$ is weakly
contractible for every prime $p$. Its homotopy groups at the constant
map are $\widetilde H^{n-i}(B\Z/p;G)$ for $1\le i\le n$, and its
set of path components is $\widetilde H^n(B\Z/p;G)$. In particular,
$H^1(B\Z/p;G)=H^2(B\Z/p;G)=0$. The universal coefficient theorem
gives
\[
\begin{aligned}
H^1(B\Z/p;G)&=\operatorname{Hom}_{\Z}(\Z/p,G)
 =\{g\in G:pg=0\},\\
H^2(B\Z/p;G)&=\operatorname{Ext}^1_{\Z}(\Z/p,G)=G/pG.
\end{aligned}
\]
Thus multiplication by every prime is an isomorphism on $G$, so $G$
is a rational vector space.

For a finite-dimensional rational subspace $W\subset G$, choose a
rational linear projection $G\to W$. The induced maps
\[
K(W,n)\longrightarrow K(G,n)\longrightarrow K(W,n)
\]
have composite homotopic to the identity, so $H^*(K(W,n);\Q)$ is a
direct summand of $H^*(K(G,n);\Q)$. If $\dim_{\Q}W=s$, the
calculation in \cite[Prop.~6.7.2]{MP12} and the K\"unneth formula
identify $H^*(K(W,n);\Q)$ with the polynomial algebra on $s$
degree-$n$ generators when $n$ is even, and with the exterior algebra
on $s$ degree-$n$ generators when $n$ is odd. In the even case, any
nonzero $W$ therefore gives nonzero cohomology in arbitrarily high
degrees. In the odd case, $H^{sn}(K(W,n);\Q)\ne0$, so an
infinite-dimensional $G$ has the same consequence by taking $s$
arbitrarily large. Finite-dimensional CW complexes have no cohomology
above their dimension. Hence $G$ is finite-dimensional and $n$ is odd.

Conversely, let $G=\Q^r$ and let $n\ge3$ be odd. The rationalization
of $S^n$ is $K(\Q,n)$ and is the mapping telescope of self-maps of
$S^n$ of degrees $2,3,4,\dots$ \cite[Prop.~6.6.4 and Cor.~6.7.3]{MP12}.
This telescope is a countable CW complex of dimension $n+1$.
Taking the product of $r$ such models gives a countable CW model of
$K(\Q^r,n)$ of dimension $r(n+1)$. Theorem~\ref{intro:countable}
gives an Oka realization.
\end{proof}

For example, $K(\Q,3)$ has a countable four-dimensional CW model given
by a mapping telescope and hence an Oka realization. Its third integral
homology group is $\Q$, so it has no finite CW model.

Realizing finite-dimensional skeleta gives highly connected maps from Oka manifolds.

\begin{corollary}[$d$-connected maps from Oka manifolds]\label{cor:finite-skeleta-oka}
For every $d\ge1$, every countable CW complex $T$ admits $d$-connected maps from Oka manifolds. In particular, $K(G,n)$ admits such maps for every countable abelian group $G$ and $n\ge1$, and so does $K(G,1)$ for every countable group $G$.
\end{corollary}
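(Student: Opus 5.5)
The plan is to realize the $d$-skeleton of $T$ and compose with the skeletal inclusion. After replacing $T$ by a homotopy equivalent countable CW complex if needed, let $T^{(d)}$ be its $d$-skeleton. It is a countable CW complex of dimension at most $d$. If $T$ is empty there is nothing to prove: the empty manifold is Oka, and the empty map is $d$-connected. Otherwise Theorem~\ref{intro:countable} supplies an Oka manifold $Y$ (indeed an open subset of some $\C^N$) and a homotopy equivalence $h:Y\to T^{(d)}$; alternatively, Theorem~\ref{thm:low-dimension} gives $Y\subset\C^{N(d)}$ when $T^{(d)}$ is nonempty. The required map is $f=\iota\circ h:Y\to T$, where $\iota:T^{(d)}\hookrightarrow T$ is the inclusion.

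The key step is that $\iota$ is $d$-connected in the paper's sense. By cellular approximation, every map $S^i\to T$ with $i\le d$ is homotopic into $T^{(d)}$. Every homotopy between maps $S^i\to T^{(d)}$ with $i<d$ is a map from a complex of dimension $i+1\le d$, so it deforms rel endpoints into $T^{(d)}$. Hence $\iota$ is a bijection on $\pi_0$ (the case $d\ge1$ uses the $1$-skeleton), an isomorphism on $\pi_i$ for $0<i<d$, and a surjection on $\pi_d$, at every base point in $T^{(d)}$. Since $h$ is a homotopy equivalence, it preserves all these properties at every base point of $Y$. So $f$ is $d$-connected.

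For the special cases, I take countable CW models of the targets. When $G$ is a countable abelian group and $n\ge1$, $K(G,n)$ has a countable CW model; one way is to start from the countable Moore space $M(G,n)$ of Corollary~\ref{cor:all-moore}, or from a countable presentation when $n=1$, and kill the higher homotopy groups by attaching cells. These groups are countable at each stage, so countably many cells suffice. Similarly, a countable group $G$ has a countable presentation 2-complex, and killing higher homotopy gives a countable $K(G,1)$. The only point needing care is this countability of homotopy groups of countable CW complexes, which holds because a countable CW complex has countably many finite subcomplexes and each finite complex has countable homotopy groups. I expect no real obstacle; the skeleton argument is standard.
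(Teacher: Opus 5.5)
Your proposal is correct and follows the paper's proof: you realize $T^{(d)}$ by Theorem~\ref{intro:countable} and compose with the skeletal inclusion, which is $d$-connected by cellular approximation. For the Eilenberg--MacLane cases the paper simply cites Hatcher~\S4.2 for countable CW models, while you sketch the cell-attaching construction yourself, correctly starting from a presentation complex rather than $M(G,1)$ when $n=1$.
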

\begin{proof}
The inclusion $T^{(d)}\hookrightarrow T$ is $d$-connected by cellular approximation; realize $T^{(d)}$ by Theorem~\ref{intro:countable}. Eilenberg--MacLane spaces of countable groups have countable CW models \cite[\S4.2]{Hat02}.
\end{proof}

\subsection{Holomorphic representatives of cohomology classes}

The Oka principle converts the $d$-connected maps of Corollary~\ref{cor:finite-skeleta-oka} into statements about spaces of holomorphic maps from Stein manifolds.

\begin{proposition}[homotopy groups of mapping spaces]\label{prop:hol-rep}
Let $T$ be a countable CW complex and $m,r\ge0$. There are an Oka open set $Y\subset\C^N$ and a map $f:Y\to T$ such that, for every Stein manifold $X$ of complex dimension at most $m$, the natural map
\[
\mathcal O(X,Y)\longrightarrow\mathcal C(X,T),\qquad g\longmapsto f\circ g,
\]
is bijective on $\pi_0$ and induces isomorphisms on $\pi_i$ for $1\le i\le r$ at every base point $g\in\mathcal O(X,Y)$. If $T$ is finite-dimensional, $Y$ and $f$ can be chosen so that the displayed map is a weak homotopy equivalence for every Stein manifold $X$.
\end{proposition}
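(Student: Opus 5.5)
Take the skeleton $T^{(d)}$ with $d=m+r+1$. The inclusion $T^{(d)}\hookrightarrow T$ is $d$-connected by cellular approximation, and $T^{(d)}$ is a countable CW complex of dimension at most $d$. Theorem~\ref{thm:low-dimension} then gives an Oka open set $Y\subset\C^N$ and a homotopy equivalence $Y\to T^{(d)}$. Let $f:Y\to T$ be its composite with the inclusion, so $f$ is $d$-connected. If $T$ is finite-dimensional, take $d=\dim T$. Then $T^{(d)}=T$ and $f$ is a homotopy equivalence. (If $T$ is empty the statement is trivial, since $Y=\varnothing$ works with the appropriate conventions; otherwise Theorem~\ref{thm:low-dimension} applies.)

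The map in question factors as
\[
\mathcal O(X,Y)\hookrightarrow\mathcal C(X,Y)\xrightarrow{\ f_*\ }\mathcal C(X,T).
\]
Since $Y$ is Oka and $X$ is Stein, the first inclusion is a weak homotopy equivalence, as recalled in the introduction. It therefore suffices to show that $f_*$ is bijective on $\pi_0$ and an isomorphism on $\pi_i$ for $1\le i\le r$ at every base point. In the finite-dimensional case $f$ is a homotopy equivalence, so $f_*$ is a homotopy equivalence for any $X$, and this gives the last assertion.

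For the general case, use that a Stein manifold of complex dimension at most $m$ is homotopy equivalent to a countable CW complex $K$ of dimension at most $m$ \cite[Cor.~3.12.2]{For17}. Replacing $X$ by $K$ changes the mapping spaces only up to homotopy equivalence, since $X$ is a manifold and so has the homotopy type of a CW complex. The homotopy groups of $\mathcal C(K,Y)$ at $g$ are computed by maps $S^i\times K\to Y$ extending $g$ on $\{\ast\}\times K$, modulo homotopy rel that subset. These are extension problems for relative CW pairs of dimension at most $m+i+1$. Equivalently, by cellular obstruction theory, if $f$ is $d$-connected and $\dim P\le d-1$, then $[P,Y]\to[P,T]$ is bijective, also relative to a subcomplex where the map is prescribed. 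Apply this to the pairs $(S^i\times K,\{\ast\}\times K)$ for surjectivity and to $(S^i\times K\times I,\ \cdots)$ for injectivity. This needs $m+i+1\le d-1$, which holds for $i\le r$ given $d=m+r+2$. So I will enlarge $d$ by one to be safe; the extra skeleton costs nothing. The same argument for $\pi_0$ needs $\dim K+1<d$. Alternatively, one can cite Whitehead's theorem in the form ``if $f$ is $d$-connected then $f_*:[P,Y]\to[P,T]$ is bijective for $\dim P<d$ and surjective for $\dim P=d$'' \cite[Prop.~4.22]{Hat02}, applied fibrewise via the evaluation fibrations.

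The main obstacle is the bookkeeping at non-basepoint components. One must check that isomorphisms on $\pi_i$ hold at every base point $g$. This means working with pointed or relative mapping problems over $\{\ast\}\times K$, rather than only free homotopy classes. The relative obstruction-theoretic version of Whitehead's lemma handles this directly, because the obstructions to extending or deforming lie in cohomology of the relevant pair with coefficients in $\pi_j$ of the homotopy fibre of $f$. That fibre is $(d-1)$-connected, so the groups vanish in the needed range.
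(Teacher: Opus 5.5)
Your proposal is correct and follows essentially the paper's route: realize a skeleton $T^{(d)}$ by an Oka open set, factor through $\mathcal C(X,Y)$ via the Oka principle, and apply relative obstruction theory over $(S^i\times K,\{\ast\}\times K)$ with a CW model $K$ of $X$ of dimension at most $m$. The differences are cosmetic: you invoke Theorem~\ref{thm:low-dimension} instead of Theorem~\ref{intro:countable}, and the paper's choice $d>m+r$ already suffices, since lifting over $S^i\times K\times I$ only needs relative dimension $m+i+1\le d$, so your enlargement to $d=m+r+2$ is harmless but unnecessary.
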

\begin{proof}
Choose an integer $d>m+r$, realize $T^{(d)}$ by
Theorem~\ref{intro:countable}, and let $f:Y\to T^{(d)}\subset T$
be the resulting map. The inclusion of the $d$-skeleton is
$d$-connected. Every Stein manifold $X$ of complex dimension at most
$m$ has a CW model of dimension at most $m$
\cite[Cor.~3.12.2]{For17}. A $d$-connected map induces a
$(d-m)$-connected map on mapping spaces from such a CW complex.
After replacing the map by its mapping-path fibration,
obstruction theory gives lifts and relative lifts for the parameter
pairs $(D^{i+1},S^i)$ whenever the corresponding relative CW dimension
$m+i+1$ is at most $d$. Since $d-m>r$, the map
$\mathcal C(X,Y)\to\mathcal C(X,T)$ is bijective on $\pi_0$ and
induces isomorphisms on $\pi_i$ for $1\le i\le r$, at every base
point. The Oka principle makes
$\mathcal O(X,Y)\hookrightarrow\mathcal C(X,Y)$ a weak homotopy
equivalence \cite[Cor.~5.5.6]{For17}. If $T$ is finite-dimensional,
realize all of $T$ and take $f$ to be a homotopy equivalence; then
$\mathcal C(X,Y)\to\mathcal C(X,T)$ is a homotopy equivalence for
every $X$.
\end{proof}

Apply Proposition~\ref{prop:hol-rep} to the infinite loop space representing $E^k$.

\begin{corollary}[holomorphic representatives for cohomology theories]\label{cor:theories}
Let $E$ be a spectrum with countable homotopy groups, and let $k\in\Z$ and $m\ge0$. There are an Oka open set $Y\subset\C^N$ and a class $u\in E^k(Y)$ such that, for every Stein manifold $X$ of complex dimension at most $m$, pullback gives a bijection
\[
\pi_0\mathcal O(X,Y)\longrightarrow E^k(X),\qquad [g]\longmapsto g^*u.
\]
The bijections are natural with respect to holomorphic maps between Stein sources. There exist an Oka manifold $Y$ and a class $u\in E^k(Y)$ giving these bijections for every Stein manifold $X$ if and only if $E_k=\Omega^\infty\Sigma^kE$ has an Oka realization.
\end{corollary}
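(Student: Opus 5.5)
The plan is to apply Proposition~\ref{prop:hol-rep} to the representing space $T=E_k=\Omega^\infty\Sigma^kE$ and read off $\pi_0$. Since $E$ has countable homotopy groups, $E_k$ has countable homotopy groups $\pi_i(E_k)=\pi_{i-k}(E)$ for $i\ge0$, so each component has a countable CW model. There are countably many components, namely $\pi_0E_k=\pi_{-k}E$, and each is an $H$-space translate of the base component, so $E_k$ has a countable CW model $T$. Taking $r=0$ in Proposition~\ref{prop:hol-rep}, I obtain an Oka open set $Y\subset\C^N$ and a map $f:Y\to T$ such that $\pi_0\mathcal O(X,Y)\to\pi_0\mathcal C(X,T)=[X,E_k]$ is bijective for all Stein $X$ with $\dim X\le m$.

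Next I set $u=[f]\in[Y,E_k]=E^k(Y)$, using that $E^k(Z)=[Z,E_k]$ for spaces of the homotopy type of CW complexes; Stein manifolds and $Y$ qualify. Under this identification the composite $[g]\mapsto[f\circ g]$ is exactly $[g]\mapsto g^*u$. Naturality is formal: for a holomorphic $\phi:X'\to X$ between Stein sources, $(g\circ\phi)^*u=\phi^*g^*u$. The only point to check is that unbased maps into the infinite loop space represent the unreduced theory. This is standard and requires no basepoint choices, since $E^k(X)=[X_+,E_k]_*=[X,E_k]$.

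For the final equivalence, suppose first that $E_k$ has an Oka realization $Y\simeq E_k$. The Oka principle \cite[Cor.~5.5.6]{For17} gives $\pi_0\mathcal O(X,Y)=[X,Y]=[X,E_k]$ for every Stein $X$, so $u$ is the class of the homotopy equivalence. Conversely, suppose an Oka manifold $Y$ and a class $u:Y\to E_k$ give bijections for every Stein $X$. Then $u_*:[X,Y]\to[X,E_k]$ is bijective for all Stein $X$.

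This converse is the main obstacle: I must deduce that $u$ is a homotopy equivalence. I intend to use Remark~\ref{rem:stein-types} to take as $X$ Stein manifolds homotopy equivalent to $S^i$, to $S^i\sqcup\mathrm{pt}$, and to finite wedges and products with spheres. Bijectivity on $[S^i,-]$ for unbased maps alone only detects $\pi_i$ modulo the $\pi_1$-action. However, $E_k$ is an infinite loop space, so its $\pi_1$ acts trivially and free and based classes agree on the target. To upgrade to $\pi_i$-isomorphisms at every basepoint of $Y$, I plan to compare $[S^i\vee S^1,-]$, or to use Stein models of $S^i\times S^1$, to see that the $\pi_1(Y)$-action on $\pi_i(Y)$ must also be trivial and that the free classes coincide with based ones. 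Taking $X=\mathrm{pt}$ gives bijectivity on $\pi_0$. Whitehead's theorem then makes $u$ a homotopy equivalence, since $Y$ is a CW-type space, so $Y$ is an Oka realization of $E_k$.
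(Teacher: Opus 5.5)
Your countable CW model of $E_k$, the use of Proposition~\ref{prop:hol-rep}, the choice $u=[f]$, naturality, and the forward direction of the final equivalence all agree with the paper. The gap is at the step you flag as the main obstacle. You never show that $f_*:\pi_i(Y_0,y)\to\pi_i(T_0,f(y))$ is an isomorphism, where $Y_0$ and $T_0$ are corresponding components. Instead you announce a plan to first prove that $\pi_1(Y)$ acts trivially on $\pi_i(Y)$, by testing against $S^i\vee S^1$ or $S^i\times S^1$.

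The first test gives nothing new. Free classes $S^i\vee S^1\to Y_0$ are orbits of $\delta\cdot(\alpha,\gamma)=(\delta\alpha,\delta\gamma\delta^{-1})$ on $\pi_i\times\pi_1$. Since $\pi_1(Y_0)$ is abelian (the $S^1$ test yields this, see below), the orbit set is $(\pi_i/\pi_1)\times\pi_1$. The comparison map is then the product of the maps you already get from $S^i$ and $S^1$ separately. For $S^i\times S^1$ you give no mechanism. Surjectivity there only produces some $\alpha'$ with $\gamma\alpha'=\alpha'$ and $f_*\alpha'=f_*\alpha$, which says nothing about $\alpha$ without the injectivity you are trying to prove.

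The missing idea is that you need no prior information about the action on $Y$. Since $T_0$ is a component of a grouplike $H$-space, it is simple. So the bijection $[S^i,Y_0]\to[S^i,T_0]$ reads $\pi_i(Y_0,y)/\pi_1(Y_0,y)\to\pi_i(T_0,f(y))$, and it is induced by the homomorphism $f_*$.

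\begin{itemize}
\item Surjectivity on $\pi_i$ is immediate.
\item For injectivity, suppose $f_*\alpha=0$. Then the orbit of $\alpha$ and the orbit of $0$ have the same image, so they coincide. The orbit of $0$ is $\{0\}$ because $\pi_1$ acts by automorphisms, hence $\alpha=0$.
\item For $i=1$, free loops correspond to conjugacy classes. A homomorphism inducing a bijection from the conjugacy classes of $\pi_1(Y_0,y)$ onto the abelian group $\pi_1(T_0,f(y))$ is an isomorphism by the same kernel argument.
\end{itemize}

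Triviality of the action on $\pi_i(Y_0)$ then follows a posteriori, and Whitehead's theorem finishes the proof. The only test spaces needed are a point and Stein manifolds homotopy equivalent to $S^i$, such as the affine quadrics $\{z_0^2+\cdots+z_i^2=1\}$.
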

\begin{proof}
Take $E_k=\Omega^\infty\Sigma^k E$, so that
$E^k(X)=[X_+,E_k]_*=[X,E_k]$, where $X_+$ is $X$ with a disjoint
base point and the subscript $*$ denotes based homotopy classes.
The space $E_k$ has countably many components and countable homotopy
groups at every base point, since $\pi_i(E_k)=\pi_{i-k}(E)$ and all
components are homotopy equivalent. The CW approximation construction
\cite[Prop.~4.13]{Hat02} gives a countable CW model: in each dimension
one attaches countably many cells to represent the target homotopy
group and then kill the kernel, using countability of the homotopy
groups of the CW complex already constructed. Carry out this
construction on each component and take their countable disjoint
union, which we again denote by $E_k$. Apply
Proposition~\ref{prop:hol-rep} to this model, and let $u\in E^k(Y)$
be the class of $f$.
If $E_k$ has an Oka realization,
realize it by Theorem~\ref{intro:countable}, take $f$ to be a homotopy
equivalence, and apply the Oka principle for every Stein source.

Conversely, suppose that an Oka manifold $Y$ and a class
$u\in E^k(Y)$ give the displayed bijections for every Stein manifold,
and let $f:Y\to E_k$ represent $u$. The Oka principle makes
$f_*:[X,Y]\to[X,E_k]$ bijective for every Stein $X$. Taking $X$ to
be a point gives a bijection on components. For each $j\ge1$, the
affine quadric
\[
Q_j=\{z_0^2+\cdots+z_j^2=1\}\subset\C^{j+1}
\]
is Stein. Writing $z=x+iy$ identifies $Q_j$ diffeomorphically with
$TS^j$, so $Q_j\simeq S^j$. Thus $f_*:[S^j,Y]\to[S^j,E_k]$ is
bijective. Since $f$ is bijective on components, these bijections
restrict to each pair of corresponding components.

Fix $y\in Y$, and write $Y_0$ and $T_0$ for the components containing
$y$ and $f(y)$. Every component of the grouplike $H$-space $E_k$ is
homotopy equivalent by translation to its identity component. Thus
$T_0$ is simple: its fundamental group is abelian and acts trivially
on $\pi_j(T_0,f(y))$ for every $j\ge2$
\cite[Example~4A.3]{Hat02}. For a connected space, free homotopy
classes of maps from spheres are the orbits of the fundamental group
action on the based homotopy group, with conjugation in degree one
\cite[Prop.~4A.2]{Hat02}.
Hence $f_*:\pi_1(Y_0,y)\to\pi_1(T_0,f(y))$ gives a bijection on
conjugacy classes. Since the target is abelian, this homomorphism is
surjective. An element in its kernel has the same image conjugacy
class as the identity, so injectivity on conjugacy classes forces the
element to be the identity.

For $j\ge2$, the induced bijection is
\[
\pi_j(Y_0,y)/\pi_1(Y_0,y)\longrightarrow\pi_j(T_0,f(y)).
\]
It implies surjectivity of $f_*$ on $\pi_j$. If an element lies in
the kernel, its orbit has the same image as the orbit of $0$.
Injectivity of the displayed map places it in that orbit, which is
$\{0\}$ because the fundamental group acts by automorphisms. Thus
$f_*$ is also injective. Since $y$ was arbitrary, $f$ is a weak
homotopy equivalence. The manifold $Y$ has CW type, so Whitehead's
theorem gives $Y\simeq E_k$.
\end{proof}

For a countable abelian group $G$ and ordinary cohomology $H^n(\,\cdot\,;G)$ with $n\ge1$, the target in
Corollary~\ref{cor:theories} can be chosen to be a domain. For complex
$K$-theory in degree zero, the target has $\pi_0(Y)\cong\Z$.

\begin{remark}[representability for all Stein manifolds]\label{rem:bound-needed}
Let $G\ne0$ be a countable abelian group and $n\ge1$. By
Corollary~\ref{cor:theories}, there exist an Oka manifold $Y$ and a
class $u\in H^n(Y;G)$ inducing bijections
$\pi_0\mathcal O(X,Y)\to H^n(X;G)$ for every Stein manifold $X$ if
and only if $K(G,n)$ has an Oka realization. By
Corollaries~\ref{cor:aspherical} and~\ref{cor:higher-em}, this holds if
and only if $\operatorname{cd}_\Z G<\infty$ when $n=1$, or $G$ is a
finite-dimensional rational vector space and $n$ is odd when $n\ge2$.
\end{remark}

\section{Fundamental groups and algebraic realizations}\label{sec:groups}

We determine the fundamental groups of Oka manifolds, algebraically elliptic manifolds, and compact K\"ahler Oka manifolds.

\subsection{Countable fundamental groups in dimension five}

\begin{corollary}\label{cor:countable-groups}
The fundamental groups of connected Oka manifolds are exactly the
countable groups. Every countable group occurs as the fundamental
group of an Oka domain $D\subset\C^5$ containing a properly
holomorphically embedded Stein surface whose inclusion is a homotopy
equivalence.
\end{corollary}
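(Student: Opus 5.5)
Necessity comes first. A connected complex manifold is second countable, so its fundamental group is countable: it has the homotopy type of a countable CW complex \cite[Prop.~3]{KS10}. For sufficiency, fix a countable group $G$ and take a presentation $\langle x_1,x_2,\ldots\mid r_1,r_2,\ldots\rangle$ with countably many generators and relators. Let $K$ be the associated presentation complex: one $0$-cell, one $1$-cell for each generator, and one $2$-cell for each relator. Then $K$ is a connected countable $2$-dimensional CW complex with $\pi_1(K)\cong G$. Applying Theorem~\ref{thm:low-dimension} with $d=2$ directly would give $N(2)=5$ and an Oka domain $D\subset\C^5$ with $D\simeq K$, so $\pi_1(D)\cong G$. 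The harder part is to get the embedded Stein surface as well, so I would organize the construction around it.

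Step one is a Stein surface $S$ homotopy equivalent to $K$. By Remark~\ref{rem:stein-types}, which relies on Gompf's theorem in dimension $2$ as formulated in \cite[Thm.~3.6]{BH26}, there is a Stein manifold $S$ of complex dimension $2$ with $S\simeq K$. In particular $S$ is connected and $\pi_1(S)\cong G$.

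Step two is to embed $S$ properly into an Oka domain of $\C^5$ by a homotopy equivalence. This is where I would use the later result Theorem~\ref{thm:stein-open-embedding}, but with a dimension count. The natural plan is the following. Take a proper holomorphic embedding $\iota:S\hookrightarrow\C^5$, which exists by the Eliashberg--Gromov--Sch\"urmann embedding theorem since $5\ge\lfloor 3\cdot2/2\rfloor+1=4$. Then choose an Oka open set $D\subset\C^5$ that contains $\iota(S)$ as a closed subset and deformation retracts onto it, or at least has inclusion a homotopy equivalence. One concrete route is to adapt the twisted domain $D_U$ of Section~\ref{tw:domain-section}. Take $U\subset\C^4$ to be a tubular neighbourhood of a suitable real $4$-dimensional model, arranged so that a copy of $S$ sits properly inside $D_U=\Phi(U\times\C)$ with homotopy-equivalent inclusion. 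For instance, one could realize $S$ inside $U\times\C$ as a graph over a totally real thickening. An alternative route is to embed $S$ properly in some $\C^4\supset$ and take a tubular Stein neighbourhood $W$ of $S$ in $\C^4$ with $W\simeq S$, then set $D=D_{W}$, which is Oka by Proposition~\ref{tw:oka}. What remains is to find a properly embedded holomorphic copy of $S$ in $D_W$. Since $\Phi$ is only smooth, I would look for a holomorphic section: a map $S\to D_W$, $s\mapsto(w(s),\zeta(s))$, whose image is closed and which is homotopic to $s\mapsto\Phi(s,0)$. The Oka property of $D_W$, through the Oka principle for maps from the Stein surface $S$, lets me deform the continuous map $s\mapsto\Phi(s,0)$ to a holomorphic map. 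Making that map also a proper embedding is a general-position argument in the spirit of the embedding theorems into Oka manifolds, and that argument is exactly the content of Theorem~\ref{thm:stein-open-embedding}.

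The main obstacle is this dimension control in step two: obtaining the target in $\C^5$ rather than in an unspecified $\C^N$ while also keeping a proper embedding that is a homotopy equivalence. If the general theorem yields only some $\C^N$, I would fall back on $D_W$ with $W\subset\C^4$ and verify the properness of a holomorphic embedding $S\to D_W$ directly. Once $j:S\hookrightarrow D$ is a proper holomorphic embedding that is a homotopy equivalence, we get $\pi_1(D)\cong\pi_1(S)\cong G$, and $D$ is connected because $S$ is. This completes both assertions.
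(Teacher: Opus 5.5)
Your overall route is the paper's. You get a Stein surface $S$ with $\pi_1(S)\cong G$ from Gompf's theorem: you cite Remark~\ref{rem:stein-types}, while the paper cites Taylor's construction, which rests on the same input. You then take a proper holomorphic embedding $e:S\hookrightarrow\C^4$ by Eliashberg--Gromov and the twisted domain $D_W\subset\C^5$. But the decisive step is left open. You think a proper holomorphic embedding $S\hookrightarrow D_W$ that is a homotopy equivalence requires two further steps: an Oka-principle deformation of $s\mapsto\Phi(e(s),0)$, and an unspecified general-position argument. You also hedge on whether Theorem~\ref{thm:stein-open-embedding} reaches $\C^5$. As written this is a gap. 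The Oka principle only produces a holomorphic map in the given homotopy class, with no control on properness or injectivity, and the general-position argument is never supplied. It is also not what Theorem~\ref{thm:stein-open-embedding} does.

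The missing observation is one line. Since $T_0=\mathrm{id}$, the slice of $D_W$ at $\zeta=0$ is exactly $W\times\{0\}$, and $\Phi(u,0)=(u,0)$. So the map you want to deform is already holomorphic, and $j(s)=(e(s),0)$ does everything:
\begin{itemize}
\item it is a holomorphic injective immersion into $D_W$;
\item it is proper, because for compact $K\subset D_W$ the closed set $j^{-1}(K)$ lies in the compact set $e^{-1}(\operatorname{pr}_w K)$;
\item it is a homotopy equivalence, because $\Phi^{-1}\circ j=(e,0):S\to W\times\C$ and $e(S)$ is a deformation retract of the tubular neighbourhood $W$.
\end{itemize}
The tubular neighbourhood $W$ need not be Stein. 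This is exactly the proof of Theorem~\ref{thm:stein-open-embedding}, whose second sentence says that an embedding into $\C^{2q}$ yields $D\subset\C^{2q+1}$. With $q=2$, the ``dimension control'' you flag as the main obstacle is automatic.

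Your first route, via an embedding into $\C^5$, does not fit this construction, which needs an even-dimensional base $\C^{2q}$. A smaller point: the presentation complex is $2$-dimensional only if some relator is present; for free $G$ it is a graph. To obtain a surface, add a trivial relator, or take the product of the resulting Stein curve with $\C$.
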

\begin{proof}
A second-countable manifold has the homotopy type of a countable CW complex, hence
its fundamental group is generated by countably many elements and is
countable.
Conversely, let $G$ be a countable group. Taylor's construction gives
a connected Stein surface $S$ with $\pi_1(S)\cong G$
\cite[Theorem~3.5 and Example~3.6]{Taylor97}. It first replaces the
countable presentation of $G$ by a locally finite two-dimensional
presentation complex and realizes its homotopy type by a Stein surface,
using Gompf's theorem~\cite{Gom98}.
The embedding theorem of Eliashberg and Gromov~\cite{EG92} gives a
proper holomorphic embedding $S\hookrightarrow\C^4$: for a Stein
manifold of dimension $n$, their bound is the least integer strictly
larger than $(3n+1)/2$, which is four for $n=2$.
Theorem~\ref{thm:stein-open-embedding} with $q=2$ gives an Oka domain
$D\subset\C^5$ and a proper holomorphic embedding $S\hookrightarrow D$
that is a homotopy equivalence.
\end{proof}

For a finitely presented group, replace its finite presentation
complex by a homotopy-equivalent finite simplicial complex
\cite[Thm.~2C.5]{Hat02}. Theorem~\ref{thm:prescribed-nerve} then
gives an Oka realization with a finite good open cover whose members
are biholomorphic to $\C^N$.

\subsection{Algebraically elliptic manifolds}\label{sec:algebraic-groups}

All varieties in this section are over $\C$, and fundamental groups are those of the underlying complex spaces. An algebraic manifold is a smooth connected algebraic variety. It is \emph{algebraically elliptic} if there is an algebraic vector bundle $E\to Y$ and a morphism $s:E\to Y$ such that $s(0_y)=y$ and $(ds)_{0_y}(E_y)=T_{Y,y}$ for every $y\in Y$. Such a map is a \emph{dominating algebraic spray}. Algebraic ellipticity is equivalent to algebraic subellipticity by \cite[Thm.~0.1]{KZ23}. The analytification of an algebraically elliptic manifold is Oka \cite[Cor.~5.6.14]{For17}.

The finiteness theorem and configuration-space constructions below originate in \cite{Kus22}, where they are stated in terms of algebraic subellipticity.

\begin{proposition}[{\cite[Thm.~3.1]{Kus22}}]\label{prop:serre-affine-finite}
If an algebraic manifold $Y$ admits a dominant morphism $\C^m\to Y$, then $\pi_1(Y)$ is finite. In particular, every algebraically elliptic variety has finite fundamental group.
\end{proposition}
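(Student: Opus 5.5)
The proof has two parts: the dominant-map statement, and its specialization to algebraically elliptic varieties.

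For the first part, let $\varphi:\C^m\to Y$ be a dominant morphism. The plan is to compare $\pi_1$ of a Zariski open subset of $Y$ with the image of $\pi_1$ of an open subset of $\C^m$.

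\begin{enumerate}
\item By generic smoothness and constructibility, there is a nonempty Zariski open $Y_0\subset Y$ such that $\varphi$ restricts to a surjective smooth morphism
\[
\varphi_0:X_0=\varphi^{-1}(Y_0)\to Y_0.
\]
This is possible because $\varphi$ is dominant in characteristic zero.
\item After shrinking $Y_0$ further, $\varphi_0$ is a locally trivial $C^\infty$ fibration; this is generic topological triviality of morphisms, due to Verdier.
\item The fibre $F$ of $\varphi_0$ may be disconnected. Take the Stein factorization of $\varphi_0$ (over a smaller $Y_0$): $\varphi_0$ factors through a finite étale cover $Y_1\to Y_0$ of some degree $k$, and the map $X_0\to Y_1$ has connected fibres.
\item The long exact sequence of the fibration $X_0\to Y_1$ shows that $\pi_1(X_0)\to\pi_1(Y_1)$ is surjective. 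Hence the image of $\pi_1(X_0)\to\pi_1(Y_0)$ has index at most $k$.
\item $X_0$ is $\C^m$ minus a proper Zariski closed subset. Its fundamental group is generated by meridians around the divisorial components of the complement, and these meridians are trivial in $\pi_1(\C^m)=1$.
\item The map $\varphi$ extends over the removed locus. A meridian around a component $E$ of $\C^m\setminus X_0$ maps into a small loop near $\varphi(E)$. Its image in $\pi_1(Y)$ is therefore trivial, since a small disc transverse to $E$ in $\C^m$ maps into $Y$.
\item $\pi_1(Y_0)\to\pi_1(Y)$ is surjective, because $Y$ is smooth and connected and the complement of $Y_0$ is a proper closed subset. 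Combining this with steps 4–6, the image of $\pi_1(X_0)$ in $\pi_1(Y)$ is trivial and has index at most $k$ in $\pi_1(Y)$. Hence $|\pi_1(Y)|\le k$.
\end{enumerate}

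For the ``in particular'', let $s:E\to Y$ be a dominating algebraic spray. Its differential at the zero section is surjective, so $s$ is dominant near the zero section. The vector bundle $E$ is not $\C^m$, but it is Zariski-locally trivial. Choose a Zariski open $U\subset Y$ over which $E|_U\cong U\times\C^r$. Two routes then give a dominant map from affine space.
\begin{itemize}
\item Compose iterated sprays, as in \cite{Kus22}: a rational parametrization of the composite sprays from a point gives a dominant morphism $\C^m\to Y$. This uses the standard fact that an algebraically elliptic manifold is dominated by an affine space via iterated spray maps $s(\cdots s(s(y_0,v_1),v_2)\cdots)$, once the bundle is trivialized over a neighbourhood.
\item Alternatively, apply the argument of the first part directly to $s|_{E|_U}$. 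Its source $E|_U$ is $U\times\C^r$, so it is enough that the meridian step works with source $\pi_1(U\times\C^r)$ mapping to $\pi_1(Y)$. Then an induction is needed.
\end{itemize}
I would follow Kusakabe's approach and construct a genuinely dominant morphism from some $\C^m$.

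The expected main obstacle is that dominance step. The bundle $E$ need not be trivial globally, and iterating sprays requires trivializations along the image. Building a morphism from an affine space (rather than a rational map) is the delicate point. Following \cite[Thm.~3.1]{Kus22}, I would use that algebraic ellipticity yields finitely many complete algebraic vector fields or composed sprays whose flows/compositions from a base point give a dominant morphism $\C^m\to Y$.
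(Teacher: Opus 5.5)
\textbf{The gap is in the ``in particular'' part.} You call the construction of a dominant morphism from an affine space the main obstacle and leave it to two unfinished routes, but it follows directly from the definition. A dominating spray satisfies $(ds)_{0_y}(E_y)=T_{Y,y}$: the differential restricted to the vertical directions of a \emph{single} fibre is already onto. Hence $s|_{E_y}\colon E_y\cong\C^r\to Y$ is a morphism from affine space that is a submersion at $0_y$. Its image therefore contains a Euclidean neighbourhood of $y$, so the Zariski closure of the image equals the irreducible $Y$, and $s|_{E_y}$ is dominant. You need no trivialization of $E$, no iteration, and no complete algebraic vector fields (which the hypothesis does not provide in any case).

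Your second route is a dead end. The composite of $s$ with the zero section $U\to E|_U$ is the inclusion $U\hookrightarrow Y$, and $\pi_1(U)\to\pi_1(Y)$ is onto. So the image of $\pi_1(E|_U)$ is all of $\pi_1(Y)$, and the argument of your first part gives no bound. The iterated-spray route can be repaired, since pullbacks of $E$ to affine space are trivial by Quillen--Suslin. It is superfluous, however, because its first step already dominates.

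\textbf{Your first part is correct but heavier than the paper's.} The paper restricts the dominant morphism to an affine $n$-plane, $n=\dim Y$, transverse to the kernel of the differential at a point of rank $n$. This gives a dominant, generically finite $f\colon\C^n\to Y$, which is finite \'etale of some degree $D$ over a Zariski open $V\subset Y$, hence a $D$-sheeted covering there. That replaces your appeal to Verdier's generic triviality and to a Stein factorization of a non-proper morphism. The latter needs a function-field construction and further shrinking to keep $X_0\to Y_1$ a locally trivial fibration with connected fibres.

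Your Steps 5--6 are also a detour. Since $\varphi$ is defined on all of $\C^m$, the map $\pi_1(X_0)\to\pi_1(Y)$ factors through $\pi_1(\C^m)=1$ directly, and meridians play no role. What your version buys is that it handles arbitrary $m$ without the transversal-slice reduction, at the cost of considerably more machinery.
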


\begin{proof}
Following Serre's argument \cite{Ser59}, let $n=\dim Y$.
Choose a point where the differential has rank $n$, and restrict the
morphism to an affine $n$-plane transverse to the kernel there. This
gives a dominant, generically finite morphism $f:\C^n\to Y$.
There are nonempty Zariski open sets $U\subset\C^n$ and $V\subset Y$
such that $f:U\to V$ is a finite \'etale morphism of some degree $D$.
Its analytification is a $D$-sheeted covering, so
$H=f_*\pi_1(U)$ has index $D$ in $\Gamma=\pi_1(V)$. Smoothness of
$Y$ makes $\Gamma\to\pi_1(Y)$ surjective. This surjection kills $H$,
since $\pi_1(U)\to\pi_1(Y)$ factors through $\pi_1(\C^n)=1$.
Hence $|\pi_1(Y)|\le[\Gamma:H]=D$. For a dominating spray
$s:E\to Y$, the restriction $s|_{E_y}:E_y\cong\C^m\to Y$ is a
submersion at $0_y$, hence dominant.
\end{proof}

\begin{corollary}[universal covers]\label{grp:universal-cover}
The universal cover of an algebraically elliptic manifold is a finite \'etale algebraic cover and is algebraically elliptic \cite[Cor.~1.5]{Kus22}, \cite[Cor.~0.3]{KZ23}.
\end{corollary}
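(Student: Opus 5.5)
Let $Y$ be an algebraically elliptic manifold with dominating algebraic spray $s:E\to Y$. The plan has two parts: show that the universal cover $\widetilde Y$ carries a natural algebraic structure making $\widetilde Y\to Y$ finite étale, and then lift the spray.

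\emph{Algebraicity of the cover.} By Proposition~\ref{prop:serre-affine-finite}, $\pi_1(Y)$ is finite, so the universal covering $p:\widetilde Y\to Y$ is a finite topological covering of the analytification. By the Riemann existence theorem (Grauert--Remmert, SGA1 XII), a finite analytic covering of a smooth algebraic variety is uniquely the analytification of a finite étale morphism of varieties. Hence $\widetilde Y$ is a smooth connected algebraic variety and $p$ is finite étale.

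\emph{Lifting the spray.} Form the pullback bundle $\widetilde E=p^*E\to\widetilde Y$; it is an algebraic vector bundle. We want a morphism $\tilde s:\widetilde E\to\widetilde Y$ with $p\circ\tilde s=s\circ(p_E)$ and $\tilde s(0_{\tilde y})=\tilde y$. Consider the morphism $\sigma:\widetilde E\to Y$, $(\tilde y,e)\mapsto s(e)$ with $e\in E_{p(\tilde y)}$. Form the fibre product $\widetilde E\times_Y\widetilde Y$ along $\sigma$ and $p$; this is a finite étale cover of $\widetilde E$. The zero section gives a section $0_{\tilde y}\mapsto(0_{\tilde y},\tilde y)$ over $Z=\{0\}\subset\widetilde E$. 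Since $\widetilde E$ deformation retracts to $Z\cong\widetilde Y$, which is simply connected, every connected component of this finite étale cover maps isomorphically onto $\widetilde E$. So the component containing the image of the zero section is the graph of a morphism $\tilde s$. Alternatively, one can lift topologically using simple connectivity of $\widetilde E$, and then argue that the lift is algebraic because its graph is a component of an algebraic fibre product.

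\emph{Domination.} Since $p$ is étale, $d p$ is an isomorphism at $\tilde y$, and $p\circ\tilde s|_{\widetilde E_{\tilde y}}=s|_{E_{p(\tilde y)}}$. Hence $(d\tilde s)_{0_{\tilde y}}$ maps $\widetilde E_{\tilde y}$ onto $T_{\widetilde Y,\tilde y}$, so $\tilde s$ is a dominating algebraic spray and $\widetilde Y$ is algebraically elliptic.

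The main obstacle is the algebraicity of the lifted spray. Making precise that a connected component of the algebraic fibre product is an algebraic section requires the facts that components of a variety are algebraic and that a finite étale degree-one cover is an isomorphism. If using subellipticity is more convenient, one can instead lift each spray of a dominating family in the same way and then invoke \cite{KZ23}.
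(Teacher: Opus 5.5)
Your proof is correct. Its first half, finiteness of $\pi_1(Y)$ from Proposition~\ref{prop:serre-affine-finite} followed by the Riemann existence theorem to make $p$ finite \'etale, is exactly the paper's argument.

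The second half takes a different route. The paper never lifts a spray. It observes that algebraic ellipticity implies algebraic subellipticity, cites that algebraic subellipticity lifts through finite \'etale covers \cite[Prop.~6.4.10]{For17}, and returns to ellipticity via the Kaliman--Zaidenberg equivalence \cite[Thm.~0.1]{KZ23}.

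You instead lift a single dominating spray directly, in four steps:
\begin{enumerate}
\item $\widetilde E=p^*E$ is simply connected, so $\sigma$ lifts through $p$.
\item Uniqueness of lifts on the connected zero section forces $\tilde s(0_{\tilde y})=\tilde y$.
\item The graph of the lift is an open and closed subset of the smooth variety $\widetilde E\times_Y\widetilde Y$, hence a Zariski component. It is finite \'etale of degree one over $\widetilde E$, so $\tilde s$ is a morphism.
\item Because $p$ is \'etale, domination transfers from $s$ to $\tilde s$.
\end{enumerate}

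This route is more elementary and self-contained. It shows that a single dominating algebraic spray pulls back along a finite \'etale cover, so the deep theorem of \cite{KZ23} is not needed here. The paper's route is a two-line citation and is natural because \cite{Kus22} was phrased for subellipticity. Your fallback of lifting each spray of a dominating family and then invoking \cite{KZ23} is essentially the paper's proof. The equivalence is genuinely needed only when the input is a dominating family rather than a single spray.
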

\begin{proof}
Proposition~\ref{prop:serre-affine-finite} makes the universal cover finite, and the Riemann existence theorem gives it an algebraic structure for which the covering is \'etale. Algebraic subellipticity lifts through finite \'etale covers \cite[Prop.~6.4.10]{For17}. Apply the equivalence with algebraic ellipticity \cite[Thm.~0.1]{KZ23}.
\end{proof}

A reduced irreducible variety $Y$ is \emph{flexible} if, for every
$y\in Y_{\mathrm{sm}}$, the vectors
\[
 \left.\frac{d}{dt}\right|_{t=0}\varphi(t,y),
 \qquad \varphi:\mathbb G_a\times Y\to Y,
\]
span $T_{Y,y}$, where $\mathbb G_a=(\C,+)$ and $\varphi$ ranges over
algebraic $\mathbb G_a$-actions. The \emph{special automorphism group}
$\operatorname{SAut}(Y)$ is the subgroup of $\operatorname{Aut}(Y)$
generated by these one-parameter subgroups.
By quasi-compactness of $Y_{\mathrm{sm}}$, finitely many such actions
$\varphi_1,\dots,\varphi_k$ span the tangent space at every point of
$Y_{\mathrm{sm}}$. Their composition
\[
(y,t)\longmapsto\varphi_1(t_1,\varphi_2(t_2,\dots,\varphi_k(t_k,y)\dots))
\]
defines a dominating algebraic spray on $Y_{\mathrm{sm}}$
with underlying vector bundle
$Y_{\mathrm{sm}}\times\C^k\to Y_{\mathrm{sm}}$. Thus $Y_{\mathrm{sm}}$ is algebraically elliptic. The \emph{ordered configuration space} is $\Conf_n(Y)=\{(y_1,\dots,y_n)\in Y^n:y_i\ne y_j\text{ for }i\ne j\}$, for $n\ge1$.

\begin{proposition}[flexible configuration spaces]\label{prop:flexible-config}
If $Y$ is a flexible quasi-affine algebraic manifold of dimension at least two, then $\Conf_n(Y)$ and its quotients $\Conf_n(Y)/H$, $H\le\mathfrak S_n$, are flexible quasi-affine algebraic manifolds.
\end{proposition}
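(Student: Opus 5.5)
I will argue in four steps: quasi-affineness and smoothness, flexibility of $\Conf_n(Y)$ via point-separating $\mathbb G_a$-actions, descent to the quotients $\Conf_n(Y)/H$, and a final check that the quotients remain quasi-affine.

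\emph{Quasi-affineness and smoothness.} Since $Y$ is quasi-affine, so is $Y^n$. The space $\Conf_n(Y)$ is the complement of the closed big diagonal in $Y^n$, so it is an open subvariety of a quasi-affine variety and hence quasi-affine. It is smooth and irreducible. For connectedness, note that the diagonal has complex codimension $\dim Y\ge2$ in the connected manifold $Y^n$, so removing it does not disconnect $Y^n$.

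\emph{Flexibility of $\Conf_n(Y)$.} Fix $(y_1,\dots,y_n)\in\Conf_n(Y)$. I will use the standard consequence of flexibility in dimension at least two, due to Arzhantsev--Flenner--Kaliman--Kutzschebauch--Zaidenberg: $\operatorname{SAut}(Y)$ acts infinitely transitively on $Y_{\mathrm{sm}}=Y$. The sharper tool is the following statement: for any finite set of points $y_1,\dots,y_n$ and any tangent vector $v\in T_{y_1}Y$, there is a $\mathbb G_a$-action, or a finite collection of conjugated actions, whose orbit vector at $y_1$ realizes $v$ modulo a spanning set and which fixes $y_2,\dots,y_n$. To construct such an action, I take a locally nilpotent derivation $\partial$ with $\partial(y_1)\ne0$ in a prescribed direction. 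Its replicas $f\partial$, with $f\in\ker\partial$, are again locally nilpotent. I choose $f$ vanishing at $y_2,\dots,y_n$ but not at $y_1$. This is possible after first moving the points by $\operatorname{SAut}(Y)$ so that they lie in distinct orbits of $\partial$, or at least are separated by $\ker\partial$; the AFKKZ arguments give such a generic position. Each replica action $\varphi$ then induces the diagonal action $\varphi^{\times n}$ on $Y^n$, which preserves $\Conf_n(Y)$. Its orbit vector at $(y_1,\dots,y_n)$ is $(f(y_1)\partial(y_1),0,\dots,0)$. Conjugating by elements of $\operatorname{SAut}(Y)$ then yields vectors spanning each factor $T_{y_i}Y$ separately. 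Together these span $T\Conf_n(Y)$, so $\Conf_n(Y)$ is flexible.

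\emph{Quotients.} The group $\mathfrak S_n$ acts freely on $\Conf_n(Y)$, so $\Conf_n(Y)/H$ is a smooth algebraic variety, namely the geometric quotient by a finite group acting freely. It is connected and quasi-affine. Quasi-affineness holds because $Y^n$ embeds equivariantly as an open subset of an affine $\mathfrak S_n$-variety: take the affine hull, or $\operatorname{Spec}$ of a finitely generated invariant-stable subalgebra. The quotient of an invariant open set by a finite group is open in the affine quotient. For flexibility, the diagonal actions $\varphi^{\times n}$ commute with $\mathfrak S_n$, so they descend to $\mathbb G_a$-actions on $\Conf_n(Y)/H$. However, the replica actions used in the previous step are not symmetric. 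I will handle this by working at the quotient directly. The covering map $\Conf_n(Y)\to\Conf_n(Y)/H$ is étale, so it suffices to find $\mathfrak S_n$-equivariant $\mathbb G_a$-actions on $\Conf_n(Y)$ whose velocities span at each point. I will use symmetric replicas: the derivation $\sum_i F\,\partial^{(i)}$ with $F$ an $\mathfrak S_n$-invariant function on $\Conf_n(Y)$. More precisely, take $F(y)=g(y_1)\cdots g(y_n)$-type products, or $F$ invariant in the kernel of the diagonal derivation $\sum_i\partial^{(i)}$. Choosing $F$ to vanish to suitable order is replaced by using invariant functions built from $h\in\ker\partial$. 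An example is the derivation $\sum_i p(h(y_i))\,\partial^{(i)}$, which commutes with permutations and is locally nilpotent because the summands commute and each is a replica in its own factor. Choosing polynomials $p$ with $p(h(y_j))=\delta_{1j}$ gives the isolated vectors at the point, provided the values $h(y_j)$ are distinct. That condition is arranged generically.

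\emph{Main obstacle.} I expect the main difficulty to be the separation requirement, namely finding $\partial$ and $h\in\ker\partial$ with $h(y_i)$ distinct and $\partial(y_1)$ in a prescribed direction. I would obtain this from infinite transitivity of $\operatorname{SAut}(Y)$ combined with the fact that $\ker\partial$ separates general orbits; this uses $\dim Y\ge2$. An alternative route is to cite \cite{Kus22} for this step.
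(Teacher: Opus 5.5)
Your smoothness, quasi-affineness and \'etale-descent steps are fine. The core step, however, is only announced, and the mechanism you sketch for it does not work as written.

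You fix $\partial$ with a prescribed direction at $y_1$ and then move the configuration into general position by some $g\in\operatorname{SAut}(Y)$. That is the same as replacing $\partial$ by its $g$-conjugate, whose value at $y_1$ is $dg^{-1}(\partial(g(y_1)))$. So the prescribed direction is lost, and nothing in the proposal keeps $y_1$ fixed while separating it from the other points. You also do not need a prescribed direction. The missing idea is to conjugate a whole spanning family by one automorphism fixing $y_1$:
\begin{itemize}
\item Choose $\mathbb G_a$-actions $\varphi_1,\dots,\varphi_m$ on $Y$ whose velocities at $y_1$ span $T_{y_1}Y$.
\item Since $\dim Y\ge2$, each $\mathcal O(Y)^{\varphi_k}$ has transcendence degree $\dim Y-1\ge1$. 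Hence the points not separated from $y_1$ by $\varphi_k$-invariants form a proper closed subset.
\item Choose distinct $z_2,\dots,z_n$ off these finitely many subsets. By $n$-transitivity, pick $g\in\operatorname{SAut}(Y)$ with $g(y_1)=y_1$ and $g(z_j)=y_j$.
\item The conjugates $g\varphi_kg^{-1}$ have velocities $dg_{y_1}(\partial_k(y_1))$ at $y_1$. These still span because $dg_{y_1}$ is invertible.
\item The conjugates admit invariants $f_k=\prod_{j\ge2}\bigl(h_{kj}-h_{kj}(z_j)\bigr)\circ g^{-1}$, where $h_{kj}\in\mathcal O(Y)^{\varphi_k}$ satisfies $h_{kj}(y_1)\ne h_{kj}(z_j)$. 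Then $f_k(y_1)\ne0$ and $f_k(y_j)=0$ for $j\ge2$.
\item The diagonal actions of the replicas $f_k\cdot g\varphi_kg^{-1}$ have velocities spanning $T_{y_1}Y\oplus0$ at the configuration. Relabelling the points gives the other factors.
\end{itemize}

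Your quotient step raises no separate issue. The diagonal action of any $\mathbb G_a$-action on $Y$, replicas included, already commutes with $\mathfrak S_n$. Your ``symmetric replica'' $\sum_ip(h(y_i))\partial^{(i)}$ is exactly the diagonal action of the replica $(p\circ h)\partial$.

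The paper avoids the whole construction. Given infinite transitivity, which you already invoke, the diagonal $\mathbb G_a$-actions generate a group acting transitively on $\Conf_n(Y)$. Because they commute with permutations, they descend and act transitively on $\Conf_n(Y)/H$ as well. For quasi-affine varieties, transitivity of $\operatorname{SAut}$ is equivalent to flexibility \cite[Thm.~2]{APS14}, \cite[Thm.~2.12]{FKZ16}. Your route, once completed as above, amounts to reproving the ``transitive implies flexible'' direction of that criterion in this special case.
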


\begin{proof}
For quasi-affine varieties, flexibility is equivalent to transitivity
of $\operatorname{SAut}$. In dimension at least two,
$\operatorname{SAut}(Y)$ acts infinitely transitively on $Y$
\cite[Thm.~2]{APS14}, \cite[Thm.~2.12]{FKZ16}. Hence its diagonal
action is transitive on $\Conf_n(Y)$. The diagonal
$\mathbb G_a$-actions commute with permutations, so they descend to
$\Conf_n(Y)/H$ and generate a group acting transitively there.

The permutation action is free, so its quotients are smooth. Choose
an affine variety $\overline Y$ containing $Y$ as an open subset.
There is an open embedding
\[
\Conf_n(Y)/H\hookrightarrow\overline Y^{\,n}/H,
\]
whose target is affine. Thus both configuration spaces are quasi-affine,
and the transitivity criterion gives flexibility of each.
\end{proof}

\begin{remark}\label{grp:singular-configurations}
The same argument applies to a flexible quasi-affine variety $Y$ with singularities. The smooth locus of $\Conf_n(Y)$ is $\Conf_n(Y_{\mathrm{sm}})$; the diagonal additive-group actions are transitive there. The permutation quotient is \'etale, so its smooth locus is the quotient of this smooth locus. Hence $\Conf_n(Y)$ and $\Conf_n(Y)/H$ are flexible varieties. The ordered and unordered configuration-space conclusions of \cite[Props.~2.6 and~3.4]{Kus22} follow by taking $H=1$ and $H=\mathfrak S_n$.
\end{remark}

For $r\ge2$, the space $\Conf_n(\C^r)$ is $(2r-2)$-connected.
Indeed, it is the complement of finitely many diagonals of real
codimension $2r$ in $(\C^r)^n$. Relative transversality makes a
filling $(i+1)$-disc transverse to every diagonal; it then avoids
them when $i+1<2r$. For $r=2$, the construction in
\cite[Thm.~3.3]{Kus22} realizes all finite groups as fundamental groups.

\begin{theorem}[finite groups via configuration spaces]\label{thm:elliptic-finite-groups}
Let $G\le\mathfrak S_n$ be a finite group. Then $Y_G=\Conf_n(\C^2)/G$ is a flexible quasi-affine algebraic manifold of dimension $2n$ with $\pi_1(Y_G)\cong G$. In particular, every finite group is the fundamental group of a flexible quasi-affine variety of dimension at most $2|G|$, and the fundamental groups of algebraically elliptic varieties are exactly the finite groups.
\end{theorem}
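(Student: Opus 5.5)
The plan has three parts, taken in order: algebraic structure and flexibility, the fundamental group, and the two ``in particular'' consequences.

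For the first part, I apply Proposition~\ref{prop:flexible-config} with $Y=\C^2$. The affine plane is affine, hence quasi-affine. It is flexible, since the translations $t\mapsto y+te_1$ and $t\mapsto y+te_2$ are algebraic $\mathbb G_a$-actions spanning every tangent space, and it has dimension $2$. The proposition then says that $\Conf_n(\C^2)$ and $Y_G=\Conf_n(\C^2)/G$ are flexible quasi-affine algebraic manifolds. The dimension is $2n$, because $\Conf_n(\C^2)$ is Zariski open in $(\C^2)^n$ and the quotient map is finite and \'etale.

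For the fundamental group, I use the remark preceding the theorem: $\Conf_n(\C^r)$ is $(2r-2)$-connected, so for $r=2$ the space $\Conf_n(\C^2)$ is $2$-connected, in particular simply connected. The group $G\le\mathfrak S_n$ acts freely by permuting coordinates, since the points $y_i$ are distinct, and $G$ is finite. The analytic quotient map $\Conf_n(\C^2)\to Y_G$ is therefore a connected Galois covering with deck group $G$. Since the total space is simply connected, it is the universal cover, and $\pi_1(Y_G)\cong G$ by covering space theory.

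For the consequences, let $G$ be an arbitrary finite group. Cayley's theorem embeds $G$ in $\mathfrak S_{|G|}$ via its left regular action. Taking $n=|G|$ gives a flexible quasi-affine variety of dimension $2|G|$ with fundamental group $G$; if $G$ already sits in a smaller $\mathfrak S_n$, the dimension is smaller, which gives the bound ``at most $2|G|$''. Flexible smooth varieties are algebraically elliptic by the composed-$\mathbb G_a$-action spray constructed before Proposition~\ref{prop:flexible-config}. So every finite group occurs as the fundamental group of an algebraically elliptic variety. Conversely, Proposition~\ref{prop:serre-affine-finite} shows that every algebraically elliptic variety has finite fundamental group, which gives equality of the two classes.

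The only step with real content is the simple connectivity of $\Conf_n(\C^2)$. It rests on the transversality remark: the diagonals $\{y_i=y_j\}$ have real codimension $4$, so maps of $S^1$ and their null-homotopies by $2$-discs can be perturbed, relative to the boundary, off the diagonals, because $2<4$. Everything else is bookkeeping, so I would state that remark carefully before invoking covering space theory.
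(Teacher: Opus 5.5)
Your proposal is correct and follows the paper's proof essentially step by step. You use Proposition~\ref{prop:flexible-config} for flexibility, the $(2r-2)$-connectivity remark with $r=2$ for simple connectivity, and freeness of the permutation action to identify $\Conf_n(\C^2)$ as the universal cover; then the left regular embedding $G\le\mathfrak S_{|G|}$ gives realization, and Proposition~\ref{prop:serre-affine-finite} gives the converse, with you also spelling out the small checks the paper leaves implicit (hypotheses on $\C^2$, the dimension count, and algebraic ellipticity via the composed $\mathbb G_a$-spray).
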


\begin{proof}
Proposition~\ref{prop:flexible-config} gives flexibility. The preceding connectivity calculation shows that $\Conf_n(\C^2)$ is connected and simply connected. A subgroup of $\mathfrak S_n$ acts freely on ordered configurations, so $\Conf_n(\C^2)$ is the universal cover of $Y_G$. For the second statement, use the left regular permutation action $G\le\mathfrak S_{|G|}$, and use Proposition~\ref{prop:serre-affine-finite} for the converse.
\end{proof}

For a nonconstant regular function $f$ on an affine variety $Z$, the
\emph{affine suspension} associated with $f$ is the hypersurface
$\{(u,v,z)\in\C^2\times Z:uv=f(z)\}$.

\begin{proposition}[cyclic groups on surfaces]\label{grp:cyclic-surfaces}
For every integer $m\ge1$, there is a smooth flexible quasi-affine surface with fundamental group $\Z/m\Z$.
\end{proposition}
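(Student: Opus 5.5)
The natural candidate is a free quotient of a flexible affine surface by $\Z/m\Z$. The definition of the affine suspension just before the statement points the way. Take $Z=\C$ with coordinate $z$ and $f(z)=z^m-1$, and let
\[
S=\{(u,v,z)\in\C^3: uv=z^m-1\}.
\]
This is a smooth affine surface, since $f$ has simple roots. Let $\mu=e^{2\pi i/m}$ and let $\Z/m\Z$ act by $(u,v,z)\mapsto(\mu u,\mu^{-1}v,\mu z)$. This action preserves $uv$ and $z^m$, so it preserves $S$. The only possible fixed points have $u=v=z=0$, which is not on $S$ because $0\ne-1$. The same holds for every nontrivial power of the generator, so the action is free. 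The quotient $Y=S/(\Z/m\Z)$ is then a smooth quasi-affine, indeed affine, surface, being the quotient of an affine variety by a finite group.

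The proof then has three steps. First, $S$ is simply connected. To see this, project to $z$. Over $z$ with $z^m\ne1$ the fibre $uv=c\neq 0$ is $\C^*$; over each of the $m$ roots it is the union of two lines, $\{uv=0\}$. Alternatively, use the known result that suspensions $uv=f(z)$ with $f$ having simple roots are simply connected; for $m=1$ one has $S\cong\C^2$. A direct argument works too: the loop in a generic $\C^*$-fibre dies over a root of $f$, where the fibre degenerates to two crossing lines, and the base $\C$ is simply connected. Hence $S$ is the universal cover of $Y$, and $\pi_1(Y)\cong\Z/m\Z$.

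Second, $Y$ is flexible. Affine suspensions over flexible affine varieties are flexible (Arzhantsev--Kuyumzhiyan--Zaidenberg). Concretely, $S$ carries two locally nilpotent derivations, $\partial_1=f'(z)\partial_u+v\partial_z$ and $\partial_2=f'(z)\partial_v+u\partial_z$, and their conjugates span the tangent spaces. The key point is that the actions must descend to the quotient, so I will use equivariant ones. $\partial_1$ maps $u\mapsto f'(z)=mz^{m-1}$, which has weight $\mu^{m-1}=\mu^{-1}$, while $u$ has weight $\mu$. So $\partial_1$ itself is not invariant, and I multiply it by a kernel element of suitable weight to make it $\Z/m\Z$-invariant. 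The kernel of $\partial_1$ contains $v$, of weight $\mu^{-1}$. A derivation of the form $v^k\partial_1$ shifts weights by $\mu^{-k}$ times the weight of $\partial_1$. Now $\partial_1$ sends $z$ (weight $\mu$) to $v$ (weight $\mu^{-1}$), so it has weight $\mu^{-2}$. I will choose $k$ with $-2-k\equiv0 \pmod m$, which makes $v^k\partial_1$ invariant. Similarly, $u^k\partial_2$ is invariant. These are locally nilpotent, being kernel multiples of locally nilpotent derivations, and they descend to $\mathbb G_a$-actions on $Y$.

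Third, and this is the main obstacle, I must verify that these invariant fields, together with their conjugates under the group they generate, span the tangent space everywhere on $Y$. This is where the multipliers $v^k$ and $u^k$ bite: the fields vanish on $\{v=0\}$ and $\{u=0\}$ respectively. I would use the criterion already used in Proposition~\ref{prop:flexible-config}: for quasi-affine varieties, flexibility is equivalent to transitivity of $\operatorname{SAut}$ on the smooth locus, so it is enough to show that the group generated is transitive on $Y$. My plan for this is as follows. Using $u^k\partial_2$, move any point with $u\ne0$ along the orbit $z\mapsto z+tu^{k+1}$ to a point with $v\ne0$, and symmetrically with the other field. Points with $u=v=0$ have $z^m=1$ and can be moved off this set by either flow, since, for example, $v^k\partial_1$ moves $u$ by $f'(z)v^k$. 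I expect this also to need care, and if the direct orbit analysis gets messy I would instead cite the result of Gurjar--Masuda--Miyanishi--Kusakabe on $\mathbb G_a$-actions on quotients of suspensions, as in \cite[Thm.~3.3]{Kus22}, where this surface construction originates.
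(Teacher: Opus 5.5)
You work with a different surface from the paper's. The paper takes the singular suspension $\{uv=t^m\}\cong\C^2/\mu_m$, which is flexible by \cite[Thm.~3.1]{AZK12}. Since flexibility only concerns the smooth locus, $(\C^2\setminus\{0\})/\mu_m$ is then a flexible quasi-affine surface with $\pi_1=\mu_m$, and nothing equivariant has to be checked.

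Your setup is sound. $S=\{uv=z^m-1\}$ is smooth, the action is free, and $S$ is simply connected. Your fibre heuristic becomes a proof via van Kampen on $\{u\ne0\}\cong\C^*\times\C$ and $\{v\ne0\}\cong\C^*\times\C$, after deleting the $m$ points with $u=v=0$. So $Y$ is even affine with $\pi_1(Y)\cong\Z/m\Z$.

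The gap is in the flexibility step, for $m\ge3$.

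\textbf{The weight of $\partial_2$.} It is not the mirror image of $\partial_1$. Since $u$ and $z$ both have weight $\mu$, $\partial_2$ (sending $z\mapsto u$ and $v\mapsto mz^{m-1}$) is already $\mu_m$-invariant. Consequently $u^{m-2}\partial_2$ is \emph{not} invariant.

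\textbf{The points $u=v=0$.} Once both fields carry a multiplier of positive degree, $v^k\partial_1$ and $u^k\partial_2$ both vanish at the $m$ points $(0,0,\zeta)$ with $\zeta^m=1$. Your justification that $v^k\partial_1$ moves $u$ by $f'(z)v^k$ gives $0$ there, because $v=0$. These points are then fixed by the group generated by your two flows. Hence every conjugate of your fields by that group also vanishes there, and neither transitivity nor spanning at the corresponding point of $Y$ follows.

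\textbf{The fallback citation.} It does not help: \cite[Thm.~3.3]{Kus22} is the configuration-space construction, not a result on quotients of suspensions.

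The repair is short. Use $\partial_2$ and $v^{m-2}\partial_1$ (for $m\ge2$).
\begin{itemize}
\item At $(0,0,\zeta)$ the flow of $\partial_2$ is $(0,tm\zeta^{m-1},\zeta)$, which leaves $\{u=v=0\}$.
\item The $\partial_2$-orbit of a point with $u=u_0\ne0$ is the whole line $S\cap\{u=u_0\}$, because $z$ moves by $tu_0$.
\item The $v^{m-2}\partial_1$-orbit of a point with $v=v_0\ne0$ is the whole line $S\cap\{v=v_0\}$, because $z$ moves by $tv_0^{m-1}$.
\item On the latter line $u=f(z)/v_0$ takes every value, so it meets every line $\{u=u_1\}$ with $u_1\ne0$.
\end{itemize}
Hence the group generated by the two flows is transitive on $S$. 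Both flows commute with $\mu_m$, so the induced group is transitive on $Y$. The transitivity criterion for quasi-affine varieties that you already invoked then gives flexibility.

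With this correction your argument yields a smooth \emph{affine} flexible surface, slightly more than the statement asks. The price is this explicit computation. The paper's route is shorter because flexibility of the singular suspension, and hence of its smooth locus, comes for free.
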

\begin{proof}
For $m=1$, take $\C^2$. For $m>1$, use the action of $\mu_m=\{\zeta\in\C:\zeta^m=1\}$ on $\C^2$ given by
\[
\zeta\cdot(z,w)=(\zeta z,\zeta^{-1}w).
\]
It is free off the origin, and its invariant ring gives
\[
\C^2/\mu_m
 =\operatorname{Spec}\C[z^m,w^m,zw]
 \cong\{(u,v,t)\in\C^3:uv=t^m\}.
\]
This affine surface is a suspension over $\C$ and is flexible by \cite[Thm.~3.1]{AZK12}. Its smooth locus is $Y_m=(\C^2\setminus\{0\})/\mu_m$, which is therefore flexible. Since $\C^2\setminus\{0\}$ is simply connected, the quotient map is the universal cover of $Y_m$ and $\pi_1(Y_m)=\mu_m\cong\Z/m\Z$. This is the two-dimensional construction of \cite[Prop.~3.5]{Kus22}.
\end{proof}

\begin{corollary}[sphere homotopy types]\label{grp:spheres}
For a positive integer $n$, the sphere $S^n$ is homotopy equivalent to an algebraically elliptic manifold if and only if $n>1$.
\end{corollary}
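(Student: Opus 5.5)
For $n=1$ we argue by the fundamental group. By Proposition~\ref{prop:serre-affine-finite}, every algebraically elliptic manifold has finite fundamental group. Since $\pi_1(S^1)\cong\Z$ is infinite, $S^1$ is not homotopy equivalent to any algebraically elliptic manifold. This gives the ``only if'' direction.

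For $n\ge2$ we produce explicit algebraic models, split by parity. First let $n=2r-1$ be odd with $r\ge2$. Take $Y=\C^r\setminus\{0\}$, which is homotopy equivalent to $S^{2r-1}$. It is quasi-affine, and we must show it is flexible. The $\mathbb G_a$-actions given by elementary shears
\[
(z_1,\dots,z_r)\mapsto(z_1,\dots,z_i+t\,p(z_{\hat i}),\dots,z_r),
\]
with $p$ a polynomial in the other coordinates, preserve $\C^r\setminus\{0\}$. Taking $p$ constant gives translations in direction $e_i$, but these do not preserve the puncture. The remedy is $p=z_j$ with $j\ne i$, i.e.\ the linear unipotent flows $z\mapsto \exp(tE_{ij})z$. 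At a point $z\ne0$ with $z_j\ne0$, the velocity of this flow is $z_je_i$. Using all pairs $i\ne j$ (here $r\ge2$ is needed), these velocities span $\C^r$ at every point of $\C^r\setminus\{0\}$. Equivalently, $\SL_r(\C)$ acts transitively on $\C^r\setminus\{0\}$ and is generated by unipotent root subgroups. So $Y$ is flexible, hence algebraically elliptic by the spray construction preceding Proposition~\ref{prop:flexible-config}.

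Now let $n=2r$ be even with $r\ge1$. Take the affine quadric
\[
Q_n=\{z_0^2+\cdots+z_n^2=1\}\subset\C^{n+1},
\]
which is homotopy equivalent to $S^n$, as noted in the proof of Corollary~\ref{cor:theories}. It is smooth, affine, and equal to the homogeneous space $\mathrm{SO}_{n+1}(\C)/\mathrm{SO}_n(\C)$. For $n+1\ge3$, the group $\mathrm{SO}_{n+1}(\C)$ is semisimple and hence generated by its unipotent subgroups. Each one-parameter unipotent subgroup gives a $\mathbb G_a$-action on $Q_n$, and together they generate a transitive action. For quasi-affine varieties, flexibility is equivalent to transitivity of $\operatorname{SAut}$, so $Q_n$ is flexible and algebraically elliptic.

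Alternatively, we can write down explicit locally nilpotent derivations: for isotropic coordinates, the vector fields $z_a\partial_b-z_b\partial_a$ combined in nilpotent combinations. Either way, for odd $n\ge3$ one could also simply use $Q_n$ instead of $\C^r\setminus\{0\}$, treating all $n\ge2$ uniformly.

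The main obstacle is verifying flexibility of $Q_n$, specifically that unipotent elements generate a group acting transitively. I would cite the standard fact that connected semisimple groups are generated by unipotent radicals of Borel subgroups, together with the homogeneous-space flexibility criterion of Arzhantsev et al.\ (the one used in Proposition~\ref{prop:flexible-config}). The case $n=2$ needs $\mathrm{SO}_3(\C)\cong\mathrm{PGL}_2(\C)$, which is still semisimple, so no exception arises.
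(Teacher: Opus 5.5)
Your argument is correct in substance, but for $n\ge2$ it takes a different route from the paper's; the case $n=1$ is handled identically, via Proposition~\ref{prop:serre-affine-finite}.

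The paper uses the quadric $\Sigma^n$ for every $n\ge2$ and gets flexibility from one citation. After the substitution $u=z_0+iz_1$, $v=z_0-iz_1$, the quadric becomes the affine suspension $uv=1-z_2^2-\cdots-z_n^2$ over $\C^{n-1}$, which is flexible by \cite[Thm.~3.2]{AZK12}.

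You instead view the quadric as the homogeneous space $\mathrm{SO}_{n+1}(\C)/\mathrm{SO}_n(\C)$ of a semisimple group generated by one-parameter unipotent subgroups, and you invoke the transitivity criterion. This is valid. As you observe, it covers all $n\ge2$ at once, so the parity split and the separate model $\C^r\setminus\{0\}$ are unnecessary. You can even bypass the criterion:
\begin{itemize}
\item $T_xQ_n=\mathfrak{so}_{n+1}(\C)\cdot x$, since the orbit map is submersive;
\item $\mathfrak{so}_{n+1}(\C)$ is spanned by nilpotent elements $N$;
\item the flows $\exp(tN)$ are $\mathbb G_a$-actions on $Q_n$ with velocity $Nx$, so these velocities span $T_xQ_n$.
\end{itemize}
The paper's route buys brevity and an explicit suspension structure. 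Yours is self-contained Lie theory plus a general criterion.

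One step in your odd case is false as written. The velocities $z_je_i$ ($i\ne j$) of the elementary flows do not span $\C^r$ at every nonzero point. At $z=e_1$ the only nonzero ones are $e_i$ for $i\ne1$, and these span only the hyperplane $\{w_1=0\}$. So your ``equivalently'' is not an equivalence.

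The transitivity statement you give next is what actually proves flexibility, via the criterion quoted in Proposition~\ref{prop:flexible-config}. Alternatively, use the flows $\exp(tN)$ for all nilpotent $N\in\mathfrak{sl}_r(\C)$. These $N$ span $\mathfrak{sl}_r(\C)$, and $\mathfrak{sl}_r(\C)\cdot z=\C^r$ for every $z\ne0$ when $r\ge2$.
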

\begin{proof}
Proposition~\ref{prop:serre-affine-finite} excludes $n=1$. For $n>1$, the affine quadric
\[
\Sigma^n=\{z\in\C^{n+1}:z_0^2+\cdots+z_n^2=1\}
\]
has equation
\[
 uv=1-z_2^2-\cdots-z_n^2
\]
after the linear substitution $u=z_0+iz_1$, $v=z_0-iz_1$.
Thus it is an affine suspension over $\C^{n-1}$. It is flexible by \cite[Thm.~3.2]{AZK12}, and hence algebraically elliptic. Writing $z=x+iy$ identifies $\Sigma^n$ with $TS^n$ by
\[
(a,y)\longmapsto\sqrt{1+|y|^2}\,a+iy,
\qquad |a|=1,\quad a\cdot y=0.
\]
The zero section is a deformation retract, so $\Sigma^n\simeq S^n$, as in \cite[Cor.~1.4]{Kus22}.
\end{proof}

\begin{corollary}[algebraic approximation after power maps]\label{grp:power-approximation}
Let $Y$ be an algebraically elliptic manifold, and let $e=\operatorname{lcm}\{\operatorname{ord}(g):g\in\pi_1(Y)\}$ be the exponent of its finite fundamental group. For every holomorphic map $f:\C^*\to Y$ and every positive integer $n$ divisible by $e$, the map $z\mapsto f(z^n)$ is a locally uniform limit of algebraic morphisms $\C^*\to Y$.
\end{corollary}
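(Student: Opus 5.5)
The plan is to pass to the universal cover and use the known approximation theorem for algebraically elliptic targets there. By Corollary~\ref{grp:universal-cover}, the universal cover $p:\widetilde Y\to Y$ is a finite \'etale algebraic cover and $\widetilde Y$ is algebraically elliptic. Write $q_n(z)=z^n$ on $\C^*$. The composite $f\circ q_n:\C^*\to Y$ induces on $\pi_1(\C^*)\cong\Z$ the homomorphism $1\mapsto f_*(\gamma)^n$, where $\gamma$ generates $\pi_1(\C^*)$. Since $e$ divides $n$ and $f_*(\gamma)$ has order dividing $e$, this is trivial. By the lifting criterion, $f\circ q_n$ therefore lifts to a holomorphic map $\tilde g:\C^*\to\widetilde Y$ with $p\circ\tilde g=f\circ q_n$; $\tilde g$ is holomorphic because $p$ is a local biholomorphism.

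Next I approximate $\tilde g$ by algebraic morphisms $\C^*\to\widetilde Y$. The source $\C^*$ is an affine algebraic manifold, and $\widetilde Y$ is algebraically elliptic, hence algebraically subelliptic. I would invoke the algebraic Oka principle for such targets (Gromov, Forstneri\v{c}, \cite[Thm.~6.15.1 or Cor.~6.15.2]{For17}). It says that a holomorphic map from an affine algebraic manifold to an algebraically subelliptic manifold is a locally uniform limit of algebraic morphisms, provided it is homotopic to an algebraic morphism. So the key point is to show that $\tilde g$ is homotopic to an algebraic map.

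Since $\C^*\simeq S^1$ and $\widetilde Y$ is simply connected, every continuous map $\C^*\to\widetilde Y$ is homotopic to a constant map (connect the base point by a path; the loop is nullhomotopic). Constant maps are algebraic, so this requirement is met. Hence there are algebraic morphisms $\tilde g_k:\C^*\to\widetilde Y$ converging to $\tilde g$ locally uniformly. Then $p\circ\tilde g_k:\C^*\to Y$ are algebraic morphisms, because $p$ is a morphism. They converge locally uniformly to $p\circ\tilde g=f\circ q_n$, by continuity of $p$ and compactness.

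The main obstacle is citing the correct form of the algebraic approximation theorem. I need the version for algebraically subelliptic targets, which requires the holomorphic map to be homotopic to an algebraic one, rather than a stronger version with approximation only on holomorphically convex compacts relative to algebraic data. If only the compact-convex version is available, I would use the fact that the lift is nullhomotopic to deform it through the standard homotopy lemma and conclude in the same way. All remaining steps are formal covering-space theory.
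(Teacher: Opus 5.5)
Your argument is correct, and its core matches the paper's. In both, the computation $(f\circ q_n)_*=1$ on $\pi_1(\C^*)\cong\Z$ is followed by Forstneri\v{c}'s algebraic approximation theorem \cite[Cor.~3.2]{For06Flex}, applied to a map homotopic to a constant.

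The difference is where you apply that theorem. You lift to the universal cover $\widetilde Y$ and approximate there. The paper applies it directly on $Y$: since $\C^*\simeq S^1$, a map inducing the trivial homomorphism on $\pi_1$ is already freely null-homotopic in $Y$. Moreover, $Y$ is itself algebraically elliptic, hence algebraically subelliptic. So the lift is unnecessary, and so is the input from Corollary~\ref{grp:universal-cover}, which needs the Riemann existence theorem and the lifting of algebraic subellipticity through finite \'etale covers. The only gain from your detour is that the approximating morphisms come with algebraic lifts to $\widetilde Y$.

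Finally, the theorem gives approximation on compact $\mathcal O(\C^*)$-convex sets, such as the closed annuli $\{1/k\le|z|\le k\}$. Choosing an algebraic approximant within $1/k$ on the $k$th annulus gives locally uniform convergence directly. This exhaustion is how the paper concludes, so your fallback appeal to a homotopy lemma is not needed.
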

\begin{proof}
Let $q_n:\C^*\to\C^*$, $q_n(z)=z^n$. The homomorphism $(f\circ q_n)_*$ sends a generator of $\pi_1(\C^*)\cong\Z$ to $f_*(1)^n=1$, since $e\mid n$. Since $\C^*\simeq S^1$, the map $z\mapsto f(z^n)$ is null-homotopic. On every compact $\mathcal O(\C^*)$-convex subset, the algebraic approximation theorem \cite[Cor.~3.2]{For06Flex} therefore approximates it by algebraic morphisms defined on all of $\C^*$. An exhaustion by such compact subsets gives the claimed convergence. Compare \cite[Cor.~1.6]{Kus22}.
\end{proof}

\begin{remark}[proper birational maps]\label{grp:birational-dominability}
Let $b:Y_2\to Y_1$ be a proper birational morphism between algebraic
manifolds. There is a dominant morphism $\C^m\to Y_1$ for some $m$ if
and only if there is one to $Y_2$ \cite[Rem.~3.2]{Kus22}.
Composing a dominant morphism to $Y_2$ with $b$ gives one to $Y_1$. Conversely, let $f:\C^m\to Y_1$ be dominant and consider $b^{-1}\circ f:\C^m\dashrightarrow Y_2$. The valuative criterion for the proper morphism $b$ shows that this rational lift is defined at every codimension-one point of $\C^m$. Its indeterminacy locus $Z$ therefore has codimension at least two. If $Z$ is empty, the lift is already the required dominant morphism. Otherwise $m\ge2$, and $\C^m\setminus Z$ is flexible by \cite[Thm.~1.1]{FKZ16}. Restriction of a dominating spray to a fibre, followed by restriction to a transverse affine $m$-plane, gives a dominant morphism $g:\C^m\to\C^m\setminus Z$. Thus $b^{-1}\circ f\circ g$ is a dominant morphism to $Y_2$.
\end{remark}

\begin{corollary}\label{grp:complete-simply-connected}
Complete algebraically elliptic manifolds $Y$ are simply connected.
\end{corollary}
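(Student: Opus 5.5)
Let $Y$ be a complete algebraically elliptic manifold. By Corollary~\ref{grp:universal-cover}, its universal cover $\widetilde Y\to Y$ is a finite \'etale algebraic cover of some degree $D=|\pi_1(Y)|$. The cover $\widetilde Y$ is again algebraically elliptic, and it is complete because finite morphisms are proper. The goal is to show $D=1$. I would do this by comparing holomorphic Euler characteristics: these are multiplicative in finite \'etale covers, and they should equal $1$ for any complete manifold that is dominated by an affine space.

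\textbf{Step 1.} By Proposition~\ref{prop:serre-affine-finite}, or rather its proof, there is a dominant morphism $\C^m\to Y$. Restricting it as in that proof to a transverse affine $n$-plane gives a dominant, generically finite morphism $\C^n\to Y$, so $Y$ is unirational. The same holds for $\widetilde Y$, since it is algebraically elliptic.

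\textbf{Step 2.} A complete unirational manifold has
\[
H^0(Y,\Omega^p_Y)=0\qquad\text{for all }p\ge1 .
\]
To see this, pull back a holomorphic $p$-form along the dominant map $\C^n\to Y$. Since $Y$ is complete, I would first pass to a resolution of the rational map from $\mathbb P^n$, which is rationally connected. Holomorphic forms on the smooth complete manifold $Y$ pull back to holomorphic forms on this smooth projective model, and those vanish. A generically finite dominant map is injective on forms, so the form on $Y$ vanishes. Here Remark~\ref{grp:birational-dominability} lets me replace $Y$ by a projective model via Chow's lemma, since dominability and $h^{p,0}$ are both birational invariants. Hodge symmetry on a projective model then gives $h^{0,p}=0$ for $p\ge1$, and hence
\[
\chi(Y,\mathcal O_Y)=1 .
\]
The same argument gives $\chi(\widetilde Y,\mathcal O_{\widetilde Y})=1$.

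\textbf{Step 3.} For a finite \'etale cover of degree $D$, Hirzebruch--Riemann--Roch gives
\[
\chi(\widetilde Y,\mathcal O_{\widetilde Y})=D\,\chi(Y,\mathcal O_Y).
\]
The Todd class pulls back, and the degree of top-dimensional cycles multiplies by $D$. When $Y$ is non-projective but complete, I would pass to a projective birational model using Chow's lemma and resolution, and pull the cover back along it; $\chi(\mathcal O)$ is a birational invariant for smooth complete varieties. Combining with Step 2 gives $1=D$, so $\pi_1(Y)$ is trivial.

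\textbf{Main obstacle.} The hardest part is the non-projective complete case. There I must check that birational invariance of $h^{0,p}$ and multiplicativity of $\chi$ survive passing to projective models compatibly with the cover. As a fallback, the whole argument could be run directly on a rationally connected projective model: rationally connected manifolds are simply connected, and $\pi_1$ is a birational invariant of smooth complete varieties. That would give the result immediately, with unirationality coming from Step 1.
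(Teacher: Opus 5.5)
Your argument is correct, but it is organized differently from the paper's. The paper never uses finiteness of $\pi_1(Y)$. It takes a smooth projective birational model $\rho\colon Y'\to Y$ (Chow's lemma and resolution) and notes that $Y'$ is unirational, hence simply connected by Serre's theorem. It then concludes because $\pi_1(Y')\to\pi_1(Y)$ is surjective: $\rho$ is an isomorphism over a dense Zariski open $U\subset Y$, and $\pi_1(U)$ surjects onto $\pi_1(Y)$ since $Y$ is smooth.

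Your main route instead reproves Serre's theorem by his own method, directly for the complete manifold. It combines finiteness of the universal cover (Corollary~\ref{grp:universal-cover}), $\chi(\mathcal O)=1$ for complete unirational manifolds, and multiplicativity of $\chi(\mathcal O)$ in finite \'etale covers. This is more self-contained, but it needs more input: Hodge symmetry, Riemann--Roch, and the finiteness result. The obstacle you flag is harmless. Hirzebruch--Riemann--Roch holds on every compact complex manifold by Atiyah--Singer. Also $h^{0,p}=h^{p,0}$ holds for smooth complete varieties by Deligne's Hodge theory (or use $R\rho_*\mathcal O_{Y'}=\mathcal O_Y$). So you can work on $Y$ itself.

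If you do pass to $Y'$, the one point to check is that $\widetilde Y\times_Y Y'$ is connected. It is: each component is finite \'etale over $Y'$, hence surjects onto $Y'$. It therefore meets the part over $U$, which is isomorphic to the connected preimage of $U$ in $\widetilde Y$. That connectedness is equivalent to the surjectivity on $\pi_1$ that the paper uses.

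Your fallback is essentially the paper's proof. It uses the Campana--Koll\'ar--Miyaoka--Mori theorem in place of Serre's, and full birational invariance of $\pi_1$ where the paper needs only the surjection.
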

\begin{proof}
The restriction of a dominating spray to a fibre is a dominant morphism from an affine space to $Y$, so $Y$ is unirational. By Chow's lemma \cite[Ex.~II.4.10]{Har77} and resolution of singularities \cite[Thm.~1.1]{BM08}, $Y$ has a smooth projective birational model, which is unirational and hence simply connected by Serre's theorem \cite{Ser59}. Its fundamental group surjects onto $\pi_1(Y)$, since the birational morphism is an isomorphism over a dense Zariski open subset of the algebraic manifold $Y$.
\end{proof}

\subsection{Algebraic approximations to classifying spaces}

Configuration spaces in higher dimensions give increasingly connected maps to the classifying spaces of finite groups.

\begin{corollary}[highly connected maps to classifying spaces]\label{cor:algebraic-bg}
\leavevmode
\begin{enumerate}[label=\textup{(\roman*)}]
\item For every finite group $G$ and $d\ge1$, there is an algebraically
elliptic quasi-affine variety with a $d$-connected map to $K(G,1)$.
\item If a group $G$ has $K(G,1)$ homotopy equivalent to an algebraically
elliptic variety, then $G=1$.
\item For every nonzero abelian group $G$ and $n\ge2$, no complex
algebraic variety is homotopy equivalent to $K(G,n)$.
\end{enumerate}
\end{corollary}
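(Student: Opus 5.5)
For (i), take $G\le\mathfrak S_n$ acting by the regular representation ($n=|G|$) and choose $r\ge2$ with $2r-2\ge d$. Put $Y=\Conf_n(\C^r)/G$.

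By Proposition~\ref{prop:flexible-config}, applied to the flexible affine manifold $\C^r$ of dimension at least two, $Y$ is a flexible quasi-affine algebraic manifold. Its smooth locus is all of $Y$, so the spray construction preceding that proposition makes $Y$ algebraically elliptic.

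The action of $G$ on ordered configurations is free. The preceding connectivity remark shows that $\Conf_n(\C^r)$ is $(2r-2)$-connected, in particular simply connected. Hence $\Conf_n(\C^r)\to Y$ is the universal cover, $\pi_1(Y)\cong G$, and $\pi_i(Y)=\pi_i(\Conf_n(\C^r))=0$ for $2\le i\le 2r-2$.

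The classifying map $Y\to K(G,1)$ is built by killing the higher homotopy groups of $Y$ by attaching cells of dimension at least $2r$. It induces a bijection on $\pi_0$, an isomorphism on $\pi_1$, and isomorphisms on $\pi_i$ for $i\le 2r-2$. It is therefore $(2r-1)$-connected, and so $d$-connected. The only mildly delicate input is the connectivity count; the transversality argument quoted in the paper already supplies it.

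For (ii), suppose $Y\simeq K(G,1)$ with $Y$ algebraically elliptic. Proposition~\ref{prop:serre-affine-finite} makes $G$ finite. By Corollary~\ref{grp:universal-cover}, the universal cover $\widetilde Y$ is a finite cover and is algebraically elliptic; it is contractible because $Y$ is aspherical. Then $Y$ is a finite-dimensional manifold, homotopy equivalent to a CW complex of finite dimension, so $\operatorname{cd}_\Z G<\infty$. A nontrivial finite group contains some $\Z/p$, whose cohomology is nonzero in infinitely many degrees, and $\operatorname{cd}$ passes to subgroups. Hence $G=1$.

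Alternatively, one can argue directly: $H^*(Y;\Z)$ vanishes above $2\dim Y$, while $Y\simeq BG$ has $H^{2k}(BG;\Z)\supset$ a nonzero torsion class in infinitely many degrees when $G\ne1$. The cleanest route is to pass to a cyclic subgroup $\Z/p\le G$ and the corresponding finite cover of $Y$, which is again a manifold and is a $K(\Z/p,1)$.

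For (iii), the underlying space of a complex algebraic variety is triangulable as a finite CW complex (Łojasiewicz), so it has finite dimension. By Corollary~\ref{cor:higher-em}, $K(G,n)$ with $G\ne0$ and $n\ge2$ then forces $G$ to be a finite-dimensional rational vector space and $n$ odd. But a finite CW complex has finitely generated homology, whereas $H_n(K(\Q^r,n);\Z)=\Q^r$ is not finitely generated. This is a contradiction.

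The main point to verify is triangulability and compactness: a variety need not be compact, but it is still homotopy equivalent to a finite CW complex, for example by a compactification together with triangulation of the pair.
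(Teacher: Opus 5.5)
Your proposal is correct and follows the paper's proof in all three parts: in (i) you compare the homotopy groups of $\Conf_n(\C^r)/G$ and $K(G,1)$ directly, where the paper reads off the homotopy fibre from the Borel fibration; in (ii) you use that a nontrivial finite group has infinite cohomological dimension, where the paper cites torsion-freeness from Brown; and in (iii) you combine the finite CW type of varieties with Corollary~\ref{cor:higher-em} and finite generation of $H_n$, exactly as the paper does. One wording point in (iii): a noncompact variety is not itself triangulable as a finite complex, so your first sentence there overstates; what the argument needs is that a variety is homotopy equivalent to a finite CW complex, which your last paragraph supplies and for which the paper cites Dimca.
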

\begin{proof}
\textup{(i)} Embed $G\le\mathfrak S_n$, choose $r\ge2$ with
$2r-1\ge d$, and put $F=\Conf_n(\C^r)$ and $Y=F/G$.
Proposition~\ref{prop:flexible-config} makes $Y$ algebraically
elliptic. The connectivity calculation preceding
Theorem~\ref{thm:elliptic-finite-groups} shows that $F$ is
$(2r-2)$-connected. For a contractible free $G$-CW complex $EG$, the
Borel fibration is
\[
F\longrightarrow EG\times_G F\longrightarrow BG=K(G,1).
\]
Since the action on $F$ is free, $EG\times_G F\simeq Y$. Thus the
classifying map $Y\to K(G,1)$ has homotopy fibre $F$ and is
$(2r-1)$-connected.

\textup{(ii)} If an algebraically elliptic variety realizes $K(G,1)$,
then $G$ is finite by Proposition~\ref{prop:serre-affine-finite}.
A group with a finite-dimensional $K(G,1)$ has finite integral
cohomological dimension and is torsion-free
\cite[Cor.~VIII.2.5]{Bro82}, so $G=1$.

\textup{(iii)} Complex algebraic varieties have the homotopy type of
finite CW complexes \cite[Ch.~1, Cor.~6.10]{Dim92}. If such a variety
realized $K(G,n)$ with $G\ne0$ and $n\ge2$, then
Corollary~\ref{cor:higher-em} would give $G=\Q^s$ for some finite
$s\ge1$. But $H_n(K(G,n);\Z)=G$ would also be finitely generated,
a contradiction.
\end{proof}

By Morgan's mixed Hodge theory, some finite simply connected complexes
are not homotopy equivalent to any algebraic manifold
\cite[\S9]{Mor78}, although they have Oka realizations. The rational
Sullivan minimal model $(\Lambda V,d)$ of a simply connected
algebraic manifold admits a weight decomposition compatible with
its cohomological grading:
\[
V=\bigoplus_{w\ge1}V_{(w)},\qquad
V_{(w)}=\bigoplus_{q\ge2}(V_{(w)}\cap V^q).
\]
Assigning a product the sum of the weights of its factors, one has
\[
d(V_{(w)})\subset(\Lambda V)_{(w)}.
\]
This is the condition of \emph{positive weights}; the rational
decomposition follows from Morgan's descent argument
\cite[\S10, especially Thm.~10.1]{Mor78}; see also
\cite[\S3]{Man22}.

\begin{problem}\label{prob:algebraic}
Which finite CW complexes with finite fundamental group are homotopy equivalent to algebraically elliptic manifolds? In particular, does every finite simply connected CW complex with positive weights have such a realization?
\end{problem}

\subsection{Compact K\"ahler Oka manifolds}\label{sec:kahler}

We realize every virtually abelian K\"ahler group as the fundamental group of a projective Oka manifold. The torsion-free virtually abelian K\"ahler groups occur as fundamental groups of quotients of abelian varieties by free actions of finite groups. For groups with torsion, we also use simply connected projective Oka manifolds with free actions of finite groups, obtained by resolving finite quotients of powers of an elliptic curve. To prove that these resolutions are Oka, we use \cite[Thm.~1.7]{ADGWX26}: a connected complex manifold is Oka if the complement of a proper closed analytic subset is Oka.

We fix the functorial resolution of Bierstone--Milman for complex spaces, denoted by $\operatorname{Res}$ \cite[Thm.~1.1 and \S1]{BM08}. For a reduced complex space $Z$, $\operatorname{Res}(Z)\to Z$ is a composition of blowups with smooth centres, finitely many over each relatively compact open set, and all centres lie over $\operatorname{Sing}Z$, so it is an isomorphism over $Z\setminus\operatorname{Sing}Z$. The resolution commutes with smooth holomorphic maps, in particular with biholomorphisms. If $Z$ is projective, all centres are compact, hence algebraic by Chow's theorem \cite{Ser56}, and $\operatorname{Res}(Z)$ is a projective manifold.

\begin{theorem}[free actions of every prime order]\label{thm:prime-actions}
For every prime $p$, there is a simply connected projective Oka manifold $S_p$ of complex dimension $p(p-1)$ with a free holomorphic $\Z/p$ action.
\end{theorem}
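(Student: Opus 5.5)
We construct $S_p$ as a resolution of a finite quotient of a power of an elliptic curve, following the strategy announced at the start of this subsection. Let $E$ be an elliptic curve and put $A=E^{p}$ with coordinates $(x_1,\dots,x_p)$. Let $H=\{(a_1,\dots,a_p)\in E[p]^{p}:\sum a_i=0\}$ act by translations, or more simply use a subgroup of translations, together with the cyclic permutation $\sigma$ of the coordinates.

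Our first try is the following. Let $Z=E^{p-1}\cong\{x\in E^p:\sum x_i=0\}$, an abelian variety of dimension $p-1$, on which $\Z/p$ acts by cyclic permutation $\sigma$. Take $B=Z^{p}$ of dimension $p(p-1)$, and let the group $\Gamma=(\Z/p)^{p}$ act coordinatewise by $\sigma$ on each factor. Put $X=B/\Gamma'$ for a suitable subgroup $\Gamma'\le\Gamma$. The quotient $Z/\langle\sigma\rangle$ of the $A_{p-1}$-type lattice by its cyclic Weyl-type action is simply connected. Indeed, $\pi_1$ of a quotient $V/\Lambda\rtimes G$ is $(\Lambda\rtimes G)/N$, where $N$ is generated by elements with fixed points, and the fixed-point-having elements generate all of $\Lambda\rtimes\langle\sigma\rangle$ because $(1-\sigma)\Lambda$ together with $\sigma$ generates. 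Thus $Y=Z^p/\langle\sigma\rangle^p=(Z/\sigma)^p$ is a simply connected normal projective variety. We then let $\Z/p$ act on $Y$ by combining the permutation $\tau$ of the $p$ factors with a translation by a $\sigma$-invariant $p$-torsion point $t\in Z$ (for instance $t=(c,\dots,c)$ with $pc=0$ and $c\ne0$, which lies in $Z$ when $p\mid p$). This induces an action on $Y$ for which we must check that no point is fixed. A fixed point of $\tau\circ t$ on $(Z/\sigma)^p$ forces $y_1=t+y_p,\dots$ up to $\sigma$, hence $y_1=\sigma^{k}(y_1+pt')$-type equations. One then chooses $t$ so that $t$ does not lie in $(1-\sigma^k)Z+\dots$, which is the arithmetic heart. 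If the product action fails to be free, I would instead take $\Z/p$ acting on $Z/\sigma$ by the translation by $t$ alone and check that $t\notin\operatorname{Im}(1-\sigma^j)$ modulo the fixed-point equations.

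Given a free $\Z/p$ action $\alpha$ on the singular simply connected $Y$, set $S_p=\operatorname{Res}(Y)$. Functoriality of Bierstone--Milman resolution under biholomorphisms lifts $\alpha$ to a holomorphic $\Z/p$ action on $S_p$, and this action is free because it covers a free action. $S_p$ is projective. Resolutions of quotient singularities preserve $\pi_1$: by Koll\'ar/Takayama, the fundamental group of a resolution of a variety with quotient singularities equals that of the variety, so $\pi_1(S_p)=\pi_1(Y)=1$. For the Oka property, $S_p$ contains the Zariski open set $(Z^\circ/\sigma)^p$-preimage, that is, the image of the free locus of $B$, which is an unramified quotient of the complement in $B$ of a closed analytic subset of codimension $\ge2$ (fixed loci of $\sigma^j$ on $Z$ have codimension $p-1\ge 1$; when $p=2$ they have codimension $1$ and need care). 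The complement of a codimension-$\ge2$ analytic set in a complex torus is Oka, and finite unramified quotients of Oka manifolds are Oka. Theorem 1.7 of \cite{ADGWX26} then makes $S_p$ Oka.

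The main obstacle is arranging freeness of the lifted $\Z/p$ action while keeping simple connectivity, which is the arithmetic choice of $t$. A secondary issue is the case $p=2$, where the fixed locus of $\sigma$ on $Z=E$ is finite, so codimension $1$ in dimension $1$. There one uses instead the Kummer-type complement $(E\setminus E[2])/\pm$, which is still Oka, as $\C^*$-like quotients are.
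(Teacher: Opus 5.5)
Your construction fails at the step you flag as the main obstacle, and no choice of $t$ can rescue it. Once you take $Y=(Z/\sigma)^p$, the quotient of $Z^p$ by the full product group $\langle\sigma\rangle^p$, no resolution $S$ of $Y$ admits a free $\Z/p$ action. In particular the functorial one, to which any free action on $Y$ would lift, admits none.

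Concretely, $\tau\circ t$ already has fixed points on $Z^p$ itself: the points $(y,y+t,\dots,y+(p-1)t)$, for the appropriate orientation of $\tau$, are fixed because $pt=0$. Your fallback fails as well. For $p\nmid k$, $\sigma^k-1$ is a surjective isogeny of $Z$, since its eigenvalues on $V_0$ are $\zeta^{kj}-1\ne0$. So $t=(\sigma^k-1)z$ is solvable, and translation by $t$ fixes $[z]\in Z/\sigma$.

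The structural reason is the holomorphic Euler characteristic. Averaging characters gives
$\chi(\mathcal O_{Z/\sigma})=\frac1p\sum_{k=0}^{p-1}\det\bigl(1-\sigma^k\mid H^1(Z,\mathcal O)\bigr)=\frac1p\bigl(0+(p-1)p\bigr)=p-1$,
hence $\chi(\mathcal O_Y)=(p-1)^p$, which is prime to $p$. Quotient singularities are rational, so every resolution $S$ has $\chi(\mathcal O_S)=(p-1)^p$. A free $\Z/p$ action on $S$ would instead force $\chi(\mathcal O_S)=p\,\chi(\mathcal O_{S/(\Z/p)})$ by Riemann--Roch. For $p=2$, $Y$ is just $\CP^1\times\CP^1$.

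The missing idea is to quotient $A=Z^p$ only by the \emph{diagonal} $R=(\sigma,\dots,\sigma)$ and to twist blockwise: $\alpha(x)_j=\sigma^jx_j+a$ with $a=(c,\dots,c)$, $c\in E[p]\setminus\{0\}$.
\begin{itemize}
\item $\alpha$ commutes with $R$ because $\sigma a=a$, and it has order $p$.
\item $\alpha^mx=R^\ell x$ with $0<m<p$ is impossible: in the block $j$ with $jm\equiv\ell\pmod p$ the equation reads $ma=0$.
\item With the full product group, $\ell$ could be chosen independently in each block, which is exactly why your version fails.
\item Simple connectivity of $A/\langle R\rangle$ still follows from Armstrong's theorem, since every $z\mapsto R^kz+\lambda$ with $p\nmid k$ has a fixed point. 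Takayama's theorem passes it to the resolution.
\end{itemize}

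There are two secondary problems in your Oka step.
\begin{itemize}
\item The claim that complements of codimension-$\ge2$ analytic sets in tori are Oka is not an available result. The paper uses that the non-free locus $B$ is finite and contained in $E[p]^N$. Then $A\setminus B$ contains $(E^2\setminus E[p]^2)\times E^{N-2}$, which is covered by $(\C^2\setminus(\tfrac1p\Lambda_E)^2)\times\C^{N-2}$. That cover is Oka because this lattice is tame (Buzzard--Lu, Forstneri\v{c}--Prezelj). The paper then applies the complement criterion of An--Du--Guo--Wang--Xie (their Thm.~1.7) twice.
\item Your $p=2$ remedy is false: $(E\setminus E[2])/\{\pm1\}$ is $\CP^1$ minus four points, which is hyperbolic, not Oka.
\end{itemize}
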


\begin{proof}
\emph{Setup.} Let $E=\C/\Lambda_E$ be an elliptic curve and let $r$ cyclically permute the coordinates of
\[
A_0=\{x\in E^p:\textstyle\sum_i x_i=0\}\cong E^{p-1}.
\]
On its universal cover $V_0=\{v\in\C^p:\sum_i v_i=0\}$, the eigenvalues of $r$ are the nontrivial $p$th roots of unity; its fixed points in $A_0$ form $B_0=\{(t,\dots,t):t\in E[p]\}$, where $E[p]=\{t\in E:pt=0\}$. Put $A=A_0^p$, $V=V_0^p$, $N=p(p-1)$, and let $R$ act by $r$ in every block, indexed by $j\in\Z/p$. Every $R^m$, $0<m<p$, has fixed point set $B=B_0^p\subset A$.
The group $\langle R\rangle$ acts freely on $A\setminus B$, and the quotient $Q=A/\langle R\rangle$ is projective and smooth off the image $\bar B$ of $B$. Writing $A=V/\Lambda$, put $\Pi=\Lambda\rtimes\langle R\rangle$, so $Q=V/\Pi$.

\emph{The free action.} Choose $a=(t,\dots,t)\in B_0$ with $t\ne0$, and define
\[
\alpha(x_j)_j=(r^jx_j+a)_j .
\]
Then $\alpha$ commutes with $R$, since $ra=a$, and $\alpha^m(x)_j=r^{jm}x_j+ma$, so $\alpha$ has order $p$. No $\alpha^m$, $0<m<p$, fixes a point of $Q$: otherwise $\alpha^mx=R^\ell x$ for some $x$ and $\ell$, and in the block $j$ with $jm\equiv\ell$ this reads $r^\ell x_j+ma=r^\ell x_j$, contradicting $ma\ne0$.

\emph{The resolution.} Take $\rho:S_p=\operatorname{Res}(Q)\to Q$. Since $Q$ is projective, $S_p$ is a projective manifold of dimension $N$, and $\rho$ is an isomorphism over $Q\setminus\bar B$. Functoriality lifts $\alpha$ to an automorphism $\hat\alpha$ of order $p$, and its action is free because a fixed point would map to a fixed point in $Q$.

\emph{Simple connectivity.} The group $\Pi$ acts properly discontinuously on the simply connected space $V$, and it is generated by elements with fixed points, since $z\mapsto Rz+\lambda$ has a fixed point and the translation by $\lambda$ is its composition with $R^{-1}$. Hence $Q=V/\Pi$ is simply connected by Armstrong's theorem \cite{Arm68}. Since $N\ge2$, the finite quotient map $A\to Q$ is \'etale in codimension one. The pair $(Q,0)$ is Kawamata log terminal \cite[Prop.~5.20(4)]{KM98}, so the resolution $\rho$ induces an isomorphism $\pi_1(S_p)\cong\pi_1(Q)$ \cite[Thm.~1.1]{Tak03}. Thus $\pi_1(S_p)=1$.

\emph{The Oka property.} Forgetting the last coordinate identifies $A_0$ with $E^{p-1}$ as a group, and hence $A$ with $E^N$. Under this identification, $B\subset E[p]^N$, so
\[
W=(E^2\setminus E[p]^2)\times E^{N-2}\subset A\setminus B.
\]
The surface $E^2\setminus E[p]^2$ is covered by
$\C^2\setminus(\tfrac1p\Lambda_E)^2$. By \cite[Prop.~4.1]{BL00},
a holomorphic automorphism of $\C^2$ maps this lattice into the
line $\{z_2=0\}$. The complement of the lattice is therefore Oka by
\cite[Thm.~1.6(a)]{FP02}.
The Oka property descends under holomorphic coverings and is preserved
by products \cite[\S5.6]{For17}, so $W$ is Oka.
The complement
\[
(A\setminus B)\setminus W
=\bigl(E[p]^2\times E^{N-2}\bigr)\setminus B
\]
is a proper closed analytic subset of the connected manifold
$A\setminus B$. Hence $A\setminus B$ is Oka by
\cite[Thm.~1.7]{ADGWX26}, and so is its free quotient
$Q\setminus\bar B$ \cite[Prop.~5.6.3]{For17}.

Finally, $S_p$ is connected and $\rho^{-1}(\bar B)$ is a proper closed
analytic subset with
$S_p\setminus\rho^{-1}(\bar B)\cong Q\setminus\bar B$.
A second application of \cite[Thm.~1.7]{ADGWX26} shows that $S_p$ is Oka.
\end{proof}

For $p=2$, we have $A=E\times E$ and $R=-1$. The functorial resolution of $\C^2/\{\pm1\}$ is its minimal resolution: blowing up the singular point gives the smooth total space of $\mathcal O_{\CP^1}(-2)$, and the lifted $\GL_2(\C)$ action is transitive on the exceptional curve. Functoriality therefore leaves no subsequent point centre, while blowing up the exceptional Cartier divisor is an isomorphism. Thus $S_2$ is the Kummer surface of $E\times E$, and $S_2/\langle\hat\alpha\rangle$ is an Enriques surface. Kummer surfaces were already known to be strongly dominable \cite[Cor.~7.2.3]{For17}.

\begin{lemma}[coinduced finite group actions]\label{lem:induced-actions}
Let $H$ be a subgroup of a finite group $F$, and let $S$ be a simply connected compact K\"ahler Oka manifold with a free holomorphic $H$ action. Then $S^{F/H}$ admits a holomorphic $F$ action in which no nonidentity element of $H$ has a fixed point.
\end{lemma}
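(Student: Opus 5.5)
The plan is the standard coinduced (induced-representation) construction. Choose a set of left coset representatives $f_1,\dots,f_k$ of $H$ in $F$, where $k=[F:H]$, so that $F=\coprod_i f_iH$. Identify $S^{F/H}$ with the space of $H$-equivariant maps $\phi:F\to S$, meaning $\phi(xh)=h^{-1}\cdot\phi(x)$ for $x\in F$ and $h\in H$; such a map is determined by its values $\phi(f_1),\dots,\phi(f_k)$. Under this identification $S^{F/H}\cong S^k$ is a product of $k$ copies of $S$. The group $F$ acts by $(g\cdot\phi)(x)=\phi(g^{-1}x)$, which preserves $H$-equivariance. In coordinates, for $g\in F$ and each $i$, write $g^{-1}f_i=f_{\sigma(i)}h_i(g)$ with $h_i(g)\in H$. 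Then
\[
(g\cdot\phi)(f_i)=\phi(f_{\sigma(i)}h_i(g))=h_i(g)^{-1}\cdot\phi(f_{\sigma(i)}),
\]
so each $g$ acts by permuting factors and applying the holomorphic automorphisms $h_i(g)^{-1}$ of $S$. Hence the action is holomorphic. It is also an action, since $(g_1g_2)\cdot\phi=g_1\cdot(g_2\cdot\phi)$ follows formally from the function-space description.

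Next I check freeness of the restriction to $H$. Choose $f_1=e$. For $1\ne h\in H$, a fixed point $\phi$ satisfies $\phi(h^{-1}x)=\phi(x)$ for all $x$. Taking $x=e$ gives $\phi(h^{-1})=\phi(e)$. On the other hand, $H$-equivariance gives $\phi(h^{-1})=\phi(e\cdot h^{-1})=h\cdot\phi(e)$. Thus $h\cdot\phi(e)=\phi(e)$, which contradicts the freeness of the $H$ action on $S$. So no nonidentity element of $H$ has a fixed point. Elements outside the conjugates of $H$ may have fixed points, but the statement does not exclude this.

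Finally, $S^{F/H}\cong S^k$ is again a simply connected compact Kähler Oka manifold, because products preserve all of these properties; the Oka property is preserved by \cite[\S5.6]{For17}. This is not strictly part of the statement but is presumably how the lemma is used. The only step needing care is the bookkeeping that the coordinate formula defines a left action independent of the choices, and the function-space model avoids that entirely.
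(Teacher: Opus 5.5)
Your proposal is correct and is the same coinduction construction the paper uses. Setting $x_c=\phi(r_c)$ turns your left-translation action on $H$-equivariant maps $F\to S$ into exactly the paper's formula $(g\cdot x)_{gc}=h(g,c)\cdot x_c$ with $h(g,c)=r_{gc}^{-1}gr_c$, and your freeness check is the paper's observation that $H$ acts on the coordinate of the trivial coset through the given free action; the function-space model only replaces the paper's verification of the cocycle identity.
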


\begin{proof}
Choose representatives $r_c$ of the cosets $c\in F/H$ with $r_H=1$. For $g\in F$ put
\[
h(g,c)=r_{gc}^{-1}gr_c\in H,\qquad
(g\cdot x)_{gc}=h(g,c)\cdot x_c.
\]
The cocycle identity $h(g_1g_2,c)=h(g_1,g_2c)h(g_2,c)$ shows that this is an action. For $g\in H$, the coordinate indexed by $c=H$ transforms by the given free action.
\end{proof}

\begin{proposition}[free actions of finite groups]\label{grp:finite-actions}
Every finite group $F$ acts freely and holomorphically on a simply
connected projective Oka manifold $S_F$ of complex dimension at most
$|F|(|F|-1)$.
\end{proposition}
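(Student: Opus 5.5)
The plan is to assemble $S_F$ as a product of coinduced manifolds, one for each prime-order subgroup, using Theorem~\ref{thm:prime-actions} and Lemma~\ref{lem:induced-actions}. If $F=1$, take $S_F$ to be a point. Otherwise, an element $g\ne1$ has a power of some prime order $p$. So $g$ acts freely whenever, for each subgroup $H\le F$ of prime order, the nonidentity elements of $H$ act without fixed points: if $g$ fixed a point, so would the prime-order power $g^k$, which generates such an $H$. It therefore suffices to handle prime-order cyclic subgroups.

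For each prime $p$ dividing $|F|$, choose one subgroup $H_p\le F$ of order $p$, by Cauchy's theorem. Identifying $H_p\cong\Z/p$, Theorem~\ref{thm:prime-actions} gives a simply connected projective Oka manifold $S_p$ of dimension $p(p-1)$ with a free $H_p$ action. Lemma~\ref{lem:induced-actions} then gives an $F$ action on $T_p=S_p^{F/H_p}$ in which no nonidentity element of $H_p$ has a fixed point. Any other subgroup of order $p$ is conjugate to $H_p$ only if it lies in the same Sylow conjugacy class, which need not hold. So instead I will observe directly that the $F$-action fixes no point for every element of order $p$. Write $g=r_c h r_c^{-1}$ only when $g$ is conjugate into $H_p$. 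To avoid this issue, I will index over all subgroups $H$ of prime order rather than one per prime. If $g$ generates $H$, then $g$ has no fixed point on $S_p^{F/H}$, by the lemma applied to $H$ itself.

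Now put
\[
S_F=\prod_{H}S_{|H|}^{F/H}
\]
with the diagonal $F$ action, where $H$ ranges over the subgroups of prime order. A point fixed by $g\ne1$ would project to a fixed point of the prime-order power $g^k$ in the factor indexed by $H=\langle g^k\rangle$, which is impossible. The manifold $S_F$ is projective and simply connected, being a finite product of such manifolds. It is Oka because products of Oka manifolds are Oka \cite[\S5.6]{For17}. The action is holomorphic since each coinduced action permutes factors and applies holomorphic automorphisms.

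The main obstacle is the dimension bound $|F|(|F|-1)$, since taking all prime-order subgroups may overshoot. To fix this, I would first try to use one subgroup per conjugacy class of prime-order subgroups, since coinduced actions are conjugation-equivariant. If $g=r h r^{-1}$ with $h\in H\setminus\{1\}$ fixed $x$, then $h$ fixes $r^{-1}x$, a contradiction. Each factor has dimension $[F:H]\,p(p-1)=|F|(p-1)$. Summing over the conjugacy classes of subgroups of order $p$, I need $\sum|F|(p-1)\le|F|(|F|-1)$, that is, $\sum(p-1)\le|F|-1$. Each conjugacy class of order-$p$ subgroups contributes $p-1$ distinct nonidentity elements, and these are disjoint across classes. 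This gives the bound.
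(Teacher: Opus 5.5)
Your argument is correct. It is the paper's construction with one refinement: the paper takes the product $\prod_H S_{|H|}^{F/H}$ over \emph{all} subgroups $H\le F$ of prime order, whereas you keep one representative per conjugacy class of such subgroups. You then use the observation that if $g=rhr^{-1}$ fixed $x$, then $h$ would fix $r^{-1}x$.

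Your stated reason for the refinement is mistaken, however: the full product does not overshoot the bound. Since $[F:H]\,|H|(|H|-1)=|F|(|H|-1)$, and each element of prime order lies in exactly one subgroup of prime order, the paper gets
\[
\dim_{\C}S_F=|F|\cdot\#\{g\in F:\operatorname{ord}(g)\text{ is prime}\}\le|F|(|F|-1)
\]
directly. This uses the same disjointness of nonidentity elements that you invoke at the end.

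Your version buys a smaller dimension, $|F|\sum_{[H]}(|H|-1)$ with the sum over conjugacy classes; for $\mathfrak S_3$ this is $18$ rather than $30$. The cost is choosing representatives and the extra conjugation step, which the paper's version needs neither of.
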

\begin{proof}
For each subgroup $H\le F$ of prime order,
Theorem~\ref{thm:prime-actions} and Lemma~\ref{lem:induced-actions}
give an $F$ action on $S_{|H|}^{F/H}$ in which $H$ acts without
fixed points. Put
\[
S_F=\prod_{\substack{H\le F\\ |H|\text{ prime}}}S_{|H|}^{F/H},
\]
taking $S_F$ to be a point if $F=1$. Every $g\ne1$ in $F$ has a power $h$ of prime order,
and the factor for $\langle h\rangle$ shows that $g$ has no fixed point.
The dimension of this product is
\[
\begin{aligned}
\dim_{\C}S_F
&=\sum_{\substack{H\le F\\ |H|\text{ prime}}}
  [F:H]\,|H|(|H|-1)\\
&=|F|\cdot\#\{g\in F:\operatorname{ord}(g)\text{ is prime}\}
\le |F|(|F|-1),
\end{aligned}
\]
since the nonidentity elements of the prime-order subgroups partition
the set of prime-order elements of $F$.
\end{proof}

For $F\ne1$, neither $S_F$ nor $S_F/F$ is unirational. Indeed,
$S_F/F$ is a smooth projective variety \cite[Lem.~3.3]{BR11}
with fundamental group $F$,
so Serre's theorem \cite{Ser59} excludes unirationality. A dominant
rational map from a projective space to $S_F$ would give one to $S_F/F$
by composition with the quotient map. Neither variety is therefore
algebraically elliptic, since restricting a dominating algebraic spray
to a fibre gives a dominant morphism from an affine space.

\begin{theorem}[virtually abelian K\"ahler groups]\label{thm:va-full}
For a finitely generated virtually abelian group $\Gamma$ with a normal free abelian subgroup $\Lambda$ of finite index, the following are equivalent:
\begin{enumerate}[label=(\roman*)]
\item $\Gamma$ is the fundamental group of a compact K\"ahler Oka manifold;
\item $\Gamma$ is the fundamental group of a projective Oka manifold;
\item $\Gamma$ is a K\"ahler group;
\item there is an $\R$-linear map
$J:\Lambda\otimes\R\to\Lambda\otimes\R$ with $J^2=-\mathrm{id}$ and
$J\rho(g)=\rho(g)J$ for every $g\in\Gamma/\Lambda$, where $\rho$ is the
action induced by conjugation in $\Gamma$.
\end{enumerate}
When these equivalent conditions hold, the manifold in \textup{(ii)}
can be chosen of complex dimension at most
\[
\frac{\operatorname{rank}\Lambda}{2}+|F|(|F|-1),
\qquad F=\Gamma/\Lambda.
\]
It can be chosen aspherical if and only if $\Gamma$ is torsion-free.
In that case, a quotient of an abelian variety by a free action of a
finite group gives such a manifold of complex dimension
$\operatorname{rank}\Lambda/2$.
\end{theorem}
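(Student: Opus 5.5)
I would prove the cycle (ii)$\Rightarrow$(i)$\Rightarrow$(iii)$\Rightarrow$(iv)$\Rightarrow$(ii). The implications (ii)$\Rightarrow$(i)$\Rightarrow$(iii) are immediate: projective manifolds are compact K\"ahler, and (i) exhibits $\Gamma$ as a K\"ahler group.

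For (iii)$\Rightarrow$(iv) I would use the structure theory of virtually abelian K\"ahler groups, following Baues--Riesterer \cite{BR11}. Pass to a finite-index subgroup $\Gamma'\le\Gamma$ that is K\"ahler, free abelian, and contained in $\Lambda$ with finite index. Then $b_1(\Gamma)=\operatorname{rank}(\Lambda)^F$ is even, and the Albanese/Hodge decomposition of $H^1$ of a compact K\"ahler manifold with group $\Gamma$, restricted to the finite cover with group $\Gamma'$, gives an $F$-invariant Hodge structure of weight one on $\Lambda\otimes\R$. This produces a complex structure $J$ commuting with $\rho(F)$. Concretely, the finite cover $X'$ with $\pi_1(X')=\Gamma'$ has a Hodge decomposition $H^1(X',\C)=H^{1,0}\oplus H^{0,1}$. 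Its $F$-action is holomorphic after averaging over the deck group, and the resulting $J$ on $H^1(X',\R)^\vee\supseteq\Lambda\otimes\R$ commutes with $F$. One technical point is that $\Gamma'$ must be normal and $\rho$ must factor through $F$ acting on $\Lambda\otimes\R\cong\Gamma'\otimes\R$; this I would arrange by taking $\Gamma'$ characteristic of finite index inside $\Lambda$. If needed, I would cite \cite{BR11}, where this equivalence is stated.

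For (iv)$\Rightarrow$(ii), which is the main step, I would use the Baues--Riesterer construction. From $J$, choose an $F$-invariant positive Riemann form so that $\mathcal A=(\Lambda\otimes\R,J)/\Lambda$ is an abelian variety of dimension $\operatorname{rank}\Lambda/2$. By \cite{BR11}, $\Gamma$ then acts on $\C^{\operatorname{rank}\Lambda/2}$ by affine holomorphic maps, with $\Lambda$ acting by translations and $F$ acting on $\mathcal A$ (possibly non-freely). Let $F$ act freely on the simply connected projective Oka manifold $S_F$ from Proposition~\ref{grp:finite-actions}. The group $\Gamma$ acts on $\C^{g}\times S_F$ through its action on $\C^g$ and through $\Gamma\to F$ on $S_F$. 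This action is free, since nontrivial elements either translate or act freely on $S_F$; it is properly discontinuous and cocompact. The quotient $M=(\mathcal A\times S_F)/F$ is therefore a compact manifold with $\pi_1(M)=\Gamma$, because $\C^g\times S_F$ is simply connected. It is projective by \cite[Lem.~3.3]{BR11}, since it is a free finite quotient of a projective manifold. It is Oka because $\mathcal A$ and $S_F$ are Oka, products of Oka manifolds are Oka, and Oka descends under the finite covering $\mathcal A\times S_F\to M$ \cite[\S5.6]{For17}. This also gives the dimension bound $\operatorname{rank}\Lambda/2+|F|(|F|-1)$.

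For asphericity: if $\Gamma$ is torsion-free, Baues--Riesterer show that the affine action of $\Gamma$ on $\C^g$ is itself free, so $\mathcal A/F$ is an aspherical projective manifold of dimension $\operatorname{rank}\Lambda/2$, Oka by covering descent. Conversely, an aspherical compact manifold has a finite-dimensional $K(\Gamma,1)$, so $\Gamma$ is torsion-free \cite[Cor.~VIII.2.5]{Bro82}. I expect the main obstacle to be the precise input from \cite{BR11}. That input is the existence of the affine holomorphic action realizing $\Gamma$ when (iv) holds, together with freeness in the torsion-free case. I would first try to build this action directly by realizing the extension class of $\Gamma$ in $H^2(F;\Lambda)$ through a translation part $F\to\mathcal A$, following their argument.
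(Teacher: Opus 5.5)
Your route is the paper's: (iii)$\Leftrightarrow$(iv) from \cite{BR11}, then the Baues--Riesterer affine action of $\Gamma$ on $\C^k$, the product with $S_F$ from Proposition~\ref{grp:finite-actions}, the quotient $(T\times S_F)/F$, covering descent for the Oka property, and \cite[Cor.~VIII.2.5]{Bro82} for the aspherical converse. Your Hodge sketch of (iii)$\Rightarrow$(iv) is fine, though you can simply take $\Gamma'=\Lambda$: the deck group $F$ of $X'$ already acts holomorphically, and $H_1(X';\R)=\Lambda\otimes\R$ exactly, so no averaging or characteristic subgroup is needed. One step fails as written, however: ``from $J$, choose an $F$-invariant positive Riemann form so that $(\Lambda\otimes\R,J)/\Lambda$ is an abelian variety.''

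For the $J$ supplied by (iv), no Riemann form need exist. With $F=1$, every complex structure on $\R^4$ satisfies (iv), and a generic complex $2$-torus is not algebraic. Averaging over $F$ only makes an existing polarization $F$-invariant; it cannot produce one. With the original $J$, your construction yields a compact K\"ahler Oka manifold, i.e.\ (i). For (ii), however, $J$ must be replaced by another complex structure that commutes with $\rho(F)$ and admits a polarization. The paper takes this from \cite[Thm.~1.9]{BR11}.

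It can also be done directly:
\begin{enumerate}[label=\textup{(\arabic*)}]
\item Choose an $F$-invariant inner product $h$ for which $J$ is orthogonal.
\item Approximate the $F$-invariant symplectic form $h(J\cdot,\cdot)$ by a rational $F$-invariant alternating form $\omega$. This is possible because rational points are dense in this $\Q$-defined subspace and nondegeneracy is an open condition.
\item Scale $\omega$ to be integral on $\Lambda$, write $\omega=h(A\cdot,\cdot)$, and put $J'=A(-A^2)^{-1/2}$.
\end{enumerate}
Then $J'^2=-\mathrm{id}$, $J'$ commutes with $\rho(F)$, $\omega(J'x,J'y)=\omega(x,y)$, and $\omega(x,J'x)>0$ for $x\ne0$. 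So $(\Lambda\otimes\R,J')/\Lambda$ is an abelian variety, and the rest of your argument goes through with $J'$.
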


\begin{proof}
The implications (ii)$\Rightarrow$(i)$\Rightarrow$(iii) are immediate, and (iii)$\Leftrightarrow$(iv) is \cite[Thm.~1.1]{BR11}; Johnson proved the corresponding characterization for flat K\"ahler manifolds \cite{Joh91}; see \cite[\S1.3]{BR11}. Assume (iv), and put $F=\Gamma/\Lambda$. By \cite[Thm.~1.9]{BR11}, choose such a $J$ for which the complex torus $(\Lambda\otimes\R,J)/\Lambda$ is an abelian variety. Baues and Riesterer then construct a homomorphism $a:\Gamma\to\C^k\rtimes U(k)$, $k=\operatorname{rank}\Lambda/2$, whose restriction to $\Lambda$ is injective and has image a lattice of translations \cite[Lem.~3.1]{BR11}. Identifying $\Lambda$ with this image, $T=\C^k/\Lambda$ is an abelian variety. If $\Gamma$ is torsion-free, it acts freely on $\C^k$ and $\C^k/\Gamma$ is a projective manifold with $\pi_1=\Gamma$ \cite[\S3, proof of Cor.~1.7]{BR11}; it is aspherical, and Oka since it is covered by $\C^k$ \cite[Prop.~5.6.3]{For17}.

In general, take $S_F$ with its free holomorphic $F$ action from Proposition~\ref{grp:finite-actions}. Thus $Y=S_F/F$ is a projective manifold \cite[Lem.~3.3]{BR11} with $\pi_1(Y)=F$ and universal cover $S_F$. The construction of \cite[\S3]{BR11} applied to $Y$ gives a projective manifold
\[
X=(\C^k\times S_F)/\Gamma
\]
with $\pi_1(X)=\Gamma$, where
$\gamma\cdot(z,s)=(a(\gamma)z,\bar\gamma\cdot s)$ and $\bar\gamma$
is the image of $\gamma$ in $F$ \cite[Prop.~3.2]{BR11}.
The action is free: if $\bar\gamma\ne1$, it has no fixed point on $S_F$;
if $\gamma\in\Lambda\setminus\{0\}$, it is a nonzero translation on
$\C^k$. It is properly discontinuous because the lattice subgroup
$\Lambda$ has finite index. Thus $\C^k\times S_F$ is the universal
cover of $X$.
Since $\Lambda$ acts trivially on $S_F$, taking the quotient by $\Lambda$
gives
\[
X\cong(T\times S_F)/F,
\]
where the action of $F$ on $T$ is induced by the affine action of
$\Gamma$ on $\C^k$.
The quotient map $T\times S_F\to X$ is a finite unramified holomorphic
covering. Since $T\times S_F$ is Oka, so is $X$ \cite[Prop.~5.6.3]{For17}.
Moreover, $\dim_{\C}X=k+\dim_{\C}S_F$, which gives the stated bound.
If a manifold with fundamental group $\Gamma$ is aspherical, it gives
a finite-dimensional $K(\Gamma,1)$. Hence $\Gamma$ has finite integral
cohomological dimension and is torsion-free \cite[Cor.~VIII.2.5]{Bro82}.
Baues and Riesterer also prove that a virtually abelian K\"ahler group
is the fundamental group of an aspherical compact K\"ahler manifold
if and only if it is torsion-free \cite[Cor.~1.8]{BR11}.
\end{proof}

\begin{proof}[Proof of Theorem~\ref{intro:projective}]
Recall that a compact K\"ahler manifold $X$ is \emph{special} if
every rank-one coherent subsheaf $\mathcal L\subset\Omega_X^p$,
$p>0$, has Kodaira--Iitaka dimension $\kappa(X,\mathcal L)<p$
\cite[preprint, Def.~2.23 and Thm.~2.26]{Cam04}.
By \cite[Prop.~7.1.9(a)]{For17}, every Oka manifold is strongly dominable. Campana's criterion \cite[preprint, Cor.~8.11]{Cam04} therefore makes a compact K\"ahler Oka manifold special. Theorem~1.1 of the preprint \cite{OAI26} gives a virtually abelian fundamental group for every special compact K\"ahler manifold; the threefold case is due to Campana--Claudon \cite{CC14}. Conversely, every virtually abelian K\"ahler group is finitely generated and hence has a normal free abelian subgroup of finite index. Theorem~\ref{thm:va-full} therefore realizes it as the fundamental group of a projective Oka manifold.
\end{proof}

For comparison, if $\Gamma\le\SL_2(\C)$ is a torsion-free cocompact lattice, the compact homogeneous Oka manifold $\SL_2(\C)/\Gamma$ has fundamental group $\Gamma$, which is not virtually abelian. Hence this manifold is not K\"ahler.

For a finite group $G\ne1$, every compact K\"ahler manifold $X$ with
$\pi_1(X)\cong G$ has $\pi_2(X)\ne0$. Indeed, its universal cover
$\widetilde X$ is a simply connected compact K\"ahler manifold of
positive dimension. The Hurewicz theorem and the nonzero K\"ahler class
give $\pi_2(X)\cong H_2(\widetilde X;\Z)\ne0$. Consequently, such an
$X$ admits no $3$-connected map to $K(G,1)$.

\section{Cofibrations, acyclic embeddings, and holomorphic factorizations}
\label{sec:factorizations}\label{cof:section}

A holomorphic map is an \emph{Oka map} if it is a Serre fibration and
satisfies the parametric Oka property with approximation and interpolation for liftings
\cite[Def.~7.4.7]{For17}. A Serre fibration is a continuous map with the
homotopy lifting property for disks $D^n$, $n\ge0$.
L\'arusson's fine intermediate model structure relates the Oka principle to
homotopical algebra: the presheaves associated with Stein manifolds are
cofibrant, those associated with Oka manifolds are fibrant, and Oka maps
induce fibrations \cite{Lar04,Lar05}.
We first characterize the holomorphic maps that are cofibrations, and
then consider proper holomorphic embeddings that preserve homotopy type.

\subsection{Intermediate cofibrations}

For the ambient model category, let $\mathcal S$ be L\'arusson's
simplicially enriched category of second-countable Stein manifolds,
including disconnected manifolds, with mapping simplicial sets
$\operatorname{Sing}\mathcal O(P,Q)$ \cite[\S2]{Lar04}.
Here $\operatorname{Sing}$ is the singular simplicial set functor.
An object of $\mathcal S$ may have connected components of different
dimensions, with no uniform dimension bound. The manifolds $A$ and $X$
in the statements below each have a fixed finite complex dimension.
A complex manifold $Y$ determines the simplicially enriched presheaf
\[
 h_Y(P)=\operatorname{Sing}\mathcal O(P,Y),\qquad P\in\mathcal S,
\]
where spaces of holomorphic maps carry the compact-open topology.
The ambient category consists of contravariant simplicial functors from
$\mathcal S$ to the category $\mathbf{sSet}$ of simplicial sets.
A holomorphic map $i:A\to X$ is an
\emph{intermediate cofibration} if $h_i:h_A\to h_X$ is a cofibration
in the intermediate model structure.
A \emph{Stein inclusion} is a closed holomorphic embedding into a Stein
manifold.

\begin{theorem}[holomorphic cofibrations]\label{cof:main}
Let $i:A\to X$ be a holomorphic map of complex manifolds. Then $i$ is
an intermediate cofibration if and only if it is a biholomorphism or a
Stein inclusion. The classification holds for both the fine and the
coarse intermediate model structures.
\end{theorem}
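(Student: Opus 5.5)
The plan is to prove sufficiency by identifying Stein inclusions with (pushouts of) Lárusson's generating cofibrations. Necessity then follows from the retract/cell-complex description of cofibrations, which will force the target of any non-invertible cofibration to be a holomorphic retract of a Stein manifold.

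\emph{Sufficiency.} If $i$ is a biholomorphism, then $h_i$ is an isomorphism of presheaves and hence a cofibration. Now let $i:A\to X$ be a Stein inclusion, so that $A$ and $X$ are objects of $\mathcal S$ and $h_A,h_X$ are representable. In both intermediate structures, the generating cofibrations contain the maps
\[
h_T\otimes\Delta^n\cup h_S\otimes\partial\Delta^n\to h_S\otimes\Delta^n
\]
associated with Stein inclusions $T\hookrightarrow S$; in the coarse structure only the Stein inclusions allowed by Lárusson's definition occur. For $n=0$ this generator is exactly $h_T\to h_S$. I would check this directly against Lárusson's definitions in \cite{Lar04,Lar05}, separately for the fine and the coarse structures. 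It is the only place where the two structures differ, so it settles the last sentence of Theorem~\ref{cof:main} for the sufficiency half.

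\emph{Necessity, first reductions.} Let $h_i$ be a cofibration. Cofibrations in either structure are objectwise monomorphisms, since they are retracts of transfinite composites of pushouts of objectwise monomorphisms. Evaluating at the one-point Stein manifold shows that $i$ is injective. Evaluating at small Stein neighbourhoods and at discs, and using the simplicial structure (paths of maps from a point), should show that $i$ is an injective immersion and in fact a homeomorphism onto its image. Closedness of $i(A)$ will come later, from the retraction.

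\emph{Necessity, main step.} This is the step I expect to be hardest. Apply the small object argument to factor $h_i$ as a relative cell complex $h_A\to C$, built by pushouts of generating cofibrations, followed by an acyclic fibration $p:C\to h_X$. Lifting in the square formed by $h_i$ and $p$ gives a section $s:h_X\to C$ under $h_A$ with $p\circ s=\mathrm{id}$.

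Assume $i$ is not surjective; the case where $i$ is surjective but not a biholomorphism must be excluded separately, by the immersion/homeomorphism step. Pick a point $x\notin i(A)$, viewed as a $0$-simplex of $h_X(\mathrm{pt})$. Its image under $s$ is not in $h_A$, so it lies in a cell. Each cell is a quotient of some $h_S\otimes\Delta^n$ with $S$ Stein, so the image is represented by a holomorphic map $\mathrm{pt}\to S$ together with the attaching data.

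The key idea is to run this argument not just at a point but naturally in $P$. I would take $P$ to range over Stein open neighbourhoods of points of $X$, or over a Stein manifold $\widetilde X$ that admits a holomorphic submersion onto a neighbourhood of $x$ (for instance a Docquier--Grauert or Stein tubular neighbourhood of a Stein submanifold through $x$). Naturality of $s$ then produces holomorphic maps $u:X\supset W\to S$ with $g\circ u=\mathrm{id}$, where $g:S\to X$ is the holomorphic map corresponding, via $p$, to the cell's identity simplex. If the relevant piece of the cell complex can be chosen globally, that is, if one cell captures the whole component of $X$ containing $x$, which I would try to arrange using the fact that $h_X$ is generated by its value on connected Stein sources and a transfinite-induction argument on the first cell hit, then $X$ becomes a holomorphic retract of the Stein manifold $S$. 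A holomorphic retract of a Stein manifold is Stein: it is a closed submanifold, being the fixed-point set of the idempotent $u\circ g$, and closed submanifolds of Stein manifolds are Stein.

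Once $X$ is known to be Stein, $h_X$ is representable. Yoneda then identifies the cofibration $h_A\to h_X$ as a retract of a cell complex in representables, and the monomorphism and immersion facts above show that $i$ is a closed embedding, hence a Stein inclusion. I would make the global-cell argument precise first, since that is where a gap would most likely appear.
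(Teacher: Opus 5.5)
Your sufficiency argument is fine, but the two structures do not differ in their generators. The fine and coarse intermediate structures have the same cofibrations, so the last sentence of the theorem needs no separate check.

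\textbf{First gap: the closed-embedding step.} The monomorphism property, evaluated at points, discs or simplices, cannot yield an immersion. The cusp $t\mapsto(t^2,t^3)$, $\C\to\C^2$, is injective. Hence $h_i$ is a monomorphism in every simplicial degree and on every Stein source, yet $i$ is not an immersion.

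The missing idea is to use the cells themselves. Pass to simplicial degree $0$ and filter the cell complex as $Z_0=H_A\to Z_1\to\cdots$. By transfinite induction, every pullback $H_P\times_{Z_\alpha}H_A$ along a map from a representable $H_P$ equals $H_C$ for a closed reduced complex subspace $C\subset P$. The successor step works because each cell is attached along a Stein inclusion $T\hookrightarrow S$, and reduced fibre products of closed subspaces are again closed subspaces. Apply this to Stein coordinate opens $P\subset X$ through the section, using $rj=\mathrm{id}$ and $ji_*=k$. This shows that $i^{-1}(P)\to P$ is a biholomorphism onto a closed subspace.

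Your plan to get closedness of $i(A)$ ``from the retraction'' also fails. A retraction $S\to X$ says nothing about $i(A)\subset X$. Moreover, your final Yoneda step treats $h_A$ as representable in $\mathcal S$, which presupposes that $A$ is Stein.

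\textbf{Second gap: the global step.} You flag this step yourself, and the remedy via connected Stein sources points the wrong way. On nonempty connected Stein sources, $H_{A\amalg S}=H_A\amalg H_S$. So for non-Stein $A$, the inclusion $A\hookrightarrow A\amalg S$ looks there like a pushout of $0\to H_S$. An argument that sees only connected sources, or only the component of $X$ through a point $x\notin i(A)$, therefore cannot prove that $X$ is Stein. The representable $h_X$ is determined by connected sources, but the cell complex is not. Its values on disconnected sources carry exactly the needed information.

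The missing idea runs as follows.
\begin{enumerate}
\item Cover all of $X$ by countably many Stein coordinate balls $U_n$, put $U=\coprod U_n$ with $q:U\to X$, and consider $z=j_U(q)$.
\item If $z\in Z_0(U)$, it gives a holomorphic inverse of $i$. This also replaces your separate treatment of the surjective case.
\item Otherwise $z$ is born in a single cell, represented by $g:U\to S$ with $g(v)\notin T$ for some $v$ in a component $V$.
\item For a ball $W\subset U_m\cap U_n$, apply naturality on $W\amalg V$. The auxiliary component $V$ keeps both competing restrictions in $H_S\setminus H_T$, which injects into $Z_{\beta+1}$ and all later stages.
\item Hence the $g|_{U_n}$ agree on overlaps and glue to $s:X\to S$ with $ps=\mathrm{id}_X$, so $X$ is a holomorphic retract of a Stein manifold.
\end{enumerate}
Without $V$, the restrictions to $W$ could land in $T$ and be identified through the attaching map, and the gluing would fail.
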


This answers the question following Theorem~7.1 of
\cite[p.~32]{FL11}, also stated in L\'arusson's appendix to
\cite[Theorem~4.1 and the subsequent discussion, p.~802]{For13}.
The proof has two parts: a holomorphic cofibration is a closed embedding,
and its target is Stein unless the map is a biholomorphism.
Following \cite[proof of Theorem~6]{Lar05}, we pass to simplicial
degree $0$. With the notation below, we show that every pullback
$H_P\times_{Z_\alpha}H_A$ is $H_C$ for a closed reduced complex subspace
$C\subset P$. We then glue
holomorphic maps on a countable Stein coordinate cover.

The generating family for intermediate cofibrations consists of
\begin{equation}\label{cof:generators}
 (h_S\times\partial\Delta^n)
 \mathop{\amalg}_{h_T\times\partial\Delta^n}
 (h_T\times\Delta^n)
 \longrightarrow h_S\times\Delta^n,
 \qquad n\geq0,
\end{equation}
where $T\hookrightarrow S$ is a Stein inclusion, together with
the projective generators
$h_S\times\partial\Delta^n\to h_S\times\Delta^n$
\cite[arXiv v4, \S12 and Theorem~13]{Lar04}.
Here $\Delta^n$ is the standard simplicial $n$-simplex and
$\partial\Delta^n$ is its boundary.
Intermediate cofibrations are retracts of relative cell complexes
formed from these maps by pushouts and transfinite compositions.
Colimits are computed pointwise and degreewise.
In simplicial degree $0$, every generator with $n\geq1$ is an
isomorphism. For $n=0$, the generators in \eqref{cof:generators} give
$\mathcal O(\,\cdot\,,T)\hookrightarrow\mathcal O(\,\cdot\,,S)$,
and the projective generators give
$0\to\mathcal O(\,\cdot\,,S)$, where $0$ is the initial presheaf.

Let $\mathcal S_+$ be the full subcategory of the underlying ordinary
category of $\mathcal S$ on nonempty Stein manifolds. After taking
simplicial degree $0$, we restrict presheaves to $\mathcal S_+$. For a complex manifold or
reduced complex space $Y$, put
\[
 H_Y(P)=\mathcal O(P,Y),\qquad P\in\mathcal S_+.
\]
At the empty object, the empty-manifold representable and the initial
presheaf satisfy
\[
 (h_\varnothing)_0(\varnothing)=\{*\},\qquad
 0(\varnothing)=\varnothing.
\]
Both restrict to the initial presheaf $0$ on $\mathcal S_+$.
Thus the generators with $n=0$ have the form
$H_T\hookrightarrow H_S$ with $T\ne\varnothing$, or $0\to H_S$.
Restriction to $\mathcal S_+$ preserves the pointwise colimit
calculations. For $P\in\mathcal S_+$, the Yoneda lemma for set-valued presheaves gives
\begin{equation}\label{cof:yoneda}
 \operatorname{Nat}(H_P,F)\cong F(P).
\end{equation}

For the proof of Theorem~\ref{cof:main}, fix an intermediate cofibration
$i:A\to X$. The relative-cell data introduced below remain fixed until
that proof is completed. In the category of simplicially enriched
presheaves, factor $h_i$ as
$h_A\xrightarrow{k}\mathcal Z\xrightarrow{r}h_X$, where $k$ is a
transfinite composition of pushouts of generating cofibrations and $r$
is an acyclic fibration.
The lifting axiom supplies $j:h_X\to\mathcal Z$ with
$rj=\mathrm{id}_{h_X}$ and $jh_i=k$.
Taking simplicial degree $0$ and then restricting to $\mathcal S_+$ gives a
relative cell complex $k:H_A\to Z$ and identities
\begin{equation}\label{cof:retract-identities}
 rj=\mathrm{id}_{H_X},\qquad ji_*=k,\qquad rk=i_*,
 \qquad i_*=H_i.
\end{equation}
Well-order the cells attached at each stage and attach them successively,
using their original attaching maps. This gives a transfinite sequence
\begin{equation}\label{cof:filtration}
 Z_0=H_A\longrightarrow Z_1\longrightarrow\cdots
 \longrightarrow Z_\lambda=Z.
\end{equation}
In simplicial degree $0$, each successor map is either a pushout of
$H_T\hookrightarrow H_S$ (including $0\to H_S$) or an isomorphism.
At a limit ordinal, the presheaf is the pointwise colimit of the earlier
stages.
Each map $Z_\alpha\to Z_\beta$, $\alpha\le\beta$, is a monomorphism
because the generators are monomorphisms. Write
$k_\alpha:H_A=Z_0\to Z_\alpha$ for the structure map.
In particular, $ji_*=k$ implies that $i_*$ is a monomorphism;
evaluation at a point shows that $i$ is injective.

We will use two elementary observations about these cells.
If $D\subset C$ are sets and $D\to B$ is any map, then
\begin{equation}\label{cof:set-pushout}
 B\amalg_D C=B\amalg(C\setminus D).
\end{equation}
Indeed, the pushout relations replace every element of the copy of
$D$ in $C$ by its image in $B$, and involve no element of
$C\setminus D$. Consequently, $B$ and $C\setminus D$ inject into the pushout with disjoint images. For $A_0\subset B$, this also gives
\begin{equation}\label{cof:set-pullback}
 C\times_{B\amalg_D C}A_0=D\times_B A_0.
\end{equation}
These identifications apply pointwise to presheaves.

\begin{lemma}\label{cof:reduced-pullback}
Let $g:P\to S$ be a holomorphic map of complex manifolds and
$D\hookrightarrow S$ a closed reduced complex subspace. Then
$C=(P\times_S D)_{\mathrm{red}}$ is a closed reduced complex subspace
of $P$ and satisfies
\[
 H_C=H_P\times_{H_S}H_D.
\]
Moreover, a holomorphic map from a reduced complex space with image
in a closed reduced complex subspace factors holomorphically through
that subspace.
\end{lemma}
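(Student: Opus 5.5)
We prove the last assertion first, since the pullback statement rests on it. Let $f:R\to S$ be a holomorphic map from a reduced complex space with $f(R)\subset D$. Factoring through $D$ is local on $R$. Near a point of $R$, the ideal sheaf $\mathcal I_D\subset\mathcal O_S$ is generated by finitely many holomorphic functions $\phi_1,\dots,\phi_k$ vanishing on $D$. Each pullback $f^*\phi_l=\phi_l\circ f$ vanishes at every point of $R$, because $f(R)\subset D$. Since $R$ is reduced, a holomorphic function vanishing pointwise is zero in $\mathcal O_R$. Hence $f^*\mathcal I_D=0$, and $f^\sharp:\mathcal O_S\to f_*\mathcal O_R$ factors through $\mathcal O_S/\mathcal I_D=\mathcal O_D$. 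This factorization is unique, and the local factorizations glue to a holomorphic map $R\to D$.

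Next we show that $C$ is a closed reduced complex subspace of $P$. The fibre product $P\times_S D$ is the closed complex subspace of $P$ defined by the ideal $g^*\mathcal I_D\cdot\mathcal O_P$, namely the preimage $g^{-1}(D)$ with its possibly nonreduced structure. Its reduction $C$ is the closed reduced complex subspace of $P$ with underlying set $g^{-1}(D)$. Here we use that the fibre product of complex spaces exists and is realized as a closed subspace of $P\times D$, and hence, via the graph of $g$, of $P$.

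It remains to identify $H_C$ with $H_P\times_{H_S}H_D$ on each $Q\in\mathcal S_+$. An element of the right-hand side is a pair $(u,v)$, with $u:Q\to P$ and $v:Q\to D$ holomorphic and $g\circ u=\iota_D\circ v$, where $\iota_D:D\hookrightarrow S$ is the inclusion. Because $\iota_D$ is a monomorphism of complex spaces, $v$ is determined by $u$: it is the unique factorization of $g\circ u$ through $D$. So the right-hand side is the set of holomorphic maps $u:Q\to P$ with $g(u(Q))\subset D$, that is, with $u(Q)\subset g^{-1}(D)=|C|$.

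By the factorization statement applied to the reduced space $Q$ and the closed reduced subspace $C\subset P$, these are exactly the holomorphic maps $Q\to C$ composed with the inclusion $C\hookrightarrow P$. The inclusion is a monomorphism, so this correspondence is a bijection $H_C(Q)\cong(H_P\times_{H_S}H_D)(Q)$. The bijection is natural in $Q$ because every map involved is defined by composition.

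The only delicate point is the reducedness argument, which is what allows us to pass from the nonreduced fibre product to $C$. It works because Stein manifolds, and more generally reduced sources, cannot detect nilpotent structure.
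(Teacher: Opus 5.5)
Your proof is correct and rests on the same key fact as the paper's proof. On a reduced source, pulled-back local generators of the defining ideal vanish pointwise and hence are zero, so the map factors through the closed subspace; the only difference is bookkeeping, since the paper invokes the universal property of the analytic fibre product $P\times_S D$ and then factors through its reduction, whereas you describe $(H_P\times_{H_S}H_D)(Q)$ directly as the holomorphic maps $u:Q\to P$ with $u(Q)\subset g^{-1}(D)$ and apply the factorization statement a second time, to $C\subset P$, which is equally valid and does not need that universal property.
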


\begin{proof}
A holomorphic function on a reduced space that vanishes at every
point is zero. If a holomorphic map from a reduced complex space has image in a closed
complex subspace, the pullback of every local section of its defining
ideal vanishes at every point and hence is zero. A map from a reduced space to an
analytic fibre product also annihilates its nilpotent functions, and
hence factors through its reduction. Applying the fibre-product
universal property to every Stein manifold $Q$ gives the natural
bijections
\[
 \mathcal O(Q,C)=
 \mathcal O(Q,P)\times_{\mathcal O(Q,S)}\mathcal O(Q,D).
\]
\end{proof}

\subsection{Closed embeddings from relative cells}

\begin{proposition}\label{cof:analytic-inverse-images}
For every stage $Z_\alpha$, every $P\in\mathcal S_+$, and every
map $v:H_P\to Z_\alpha$, there are a closed reduced complex subspace
$c:C\hookrightarrow P$ and a holomorphic map $a:C\to A$ such that
\[
\begin{array}{ccc}
 H_C&\xrightarrow{H_a}&H_A\\
 \big\downarrow{\scriptstyle H_c}&&
 \big\downarrow{\scriptstyle k_\alpha}\\
 H_P&\xrightarrow{v}&Z_\alpha
\end{array}
\]
is a pullback. The subspace $C$ may be empty.
\end{proposition}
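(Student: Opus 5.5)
We argue by transfinite induction on $\alpha$, proving the statement simultaneously for all $P\in\mathcal S_+$ and all $v:H_P\to Z_\alpha$.

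\emph{Base case $\alpha=0$.} Here $v:H_P\to H_A$ corresponds by \eqref{cof:yoneda} to a holomorphic map $g:P\to A$. The pullback of $\mathrm{id}_{H_A}$ along $v$ is $H_P$ itself, so we take $C=P$, $c=\mathrm{id}$ and $a=g$.

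\emph{Successor step.} Suppose $Z_{\alpha+1}=Z_\alpha\amalg_{H_T}H_S$ is a pushout of a cell $H_T\hookrightarrow H_S$. The case $0\to H_S$ is included by taking $T=\varnothing$, and the isomorphism stages are trivial. By \eqref{cof:yoneda}, $v$ is an element of $Z_{\alpha+1}(P)$. By \eqref{cof:set-pushout} applied at $P$, either $v$ factors through $Z_\alpha$ or it is an element of $H_S(P)\setminus H_T(P)$.

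In the first case, the monomorphism $Z_\alpha\to Z_{\alpha+1}$ together with \eqref{cof:set-pullback} identifies $H_P\times_{Z_{\alpha+1}}H_A$ with $H_P\times_{Z_\alpha}H_A$. The inductive hypothesis then applies directly.

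In the second case, $v$ factors as $H_P\xrightarrow{g}H_S\to Z_{\alpha+1}$ with $g:P\to S$ holomorphic. By \eqref{cof:set-pullback}, applied pointwise with $A_0$ equal to the image of $H_A$ inside $Z_\alpha$, we get
\[
H_S\times_{Z_{\alpha+1}}H_A=H_T\times_{Z_\alpha}H_A.
\]
The composite $H_T\to Z_\alpha$ is a map from a representable with $T\in\mathcal S_+$, so the inductive hypothesis gives $C'\subset T$ and $a':C'\to A$ with $H_{C'}=H_T\times_{Z_\alpha}H_A$. Since $T$ is closed in $S$, $C'$ is a closed reduced subspace of $S$. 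Lemma~\ref{cof:reduced-pullback} then gives $C=(P\times_S C')_{\mathrm{red}}$ with $H_C=H_P\times_{H_S}H_{C'}$. Pasting the two pullback squares gives the required square, with $a$ equal to $a'$ composed with the projection $C\to C'$.

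\emph{Limit step.} This is the main obstacle. Colimits are pointwise, and $Z_\lambda(P)$ is an increasing union of the $Z_\alpha(P)$ along monomorphisms. Hence the element $v\in Z_\lambda(P)$ already lies in some $Z_\alpha(P)$ with $\alpha<\lambda$. Because $Z_\alpha\to Z_\lambda$ is a monomorphism, the pullback over $Z_\lambda$ agrees with the pullback over $Z_\alpha$: for a monomorphism $m$, a pair of maps into the source that agree after composing with $m$ already agree. So the limit stage also reduces to the inductive hypothesis.

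The delicate points are the following.
\begin{itemize}
\item The identification of pullbacks of presheaves with $H$ of a reduced space is checked only on nonempty Stein test objects $Q$, where \eqref{cof:set-pullback} is used pointwise on $\mathcal S_+$.
\item The empty-object discrepancy is harmless because we have restricted to $\mathcal S_+$.
\item The subspace $C$ may be empty, which is allowed by the statement.
\end{itemize}
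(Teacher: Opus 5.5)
Your proof is correct and follows the paper's argument essentially step for step: the Yoneda base case, the dichotomy from \eqref{cof:set-pushout} at successor stages combined with \eqref{cof:set-pullback} and Lemma~\ref{cof:reduced-pullback} for a newly attached cell, and the monomorphism argument at limit stages. The only slip concerns a cell $0\to H_S$. There you cannot invoke the inductive hypothesis for $H_T\to Z_\alpha$, since $\varnothing\notin\mathcal S_+$. However, $H_T\times_{Z_\alpha}H_A=0=H_\varnothing$ on $\mathcal S_+$, so $C'=\varnothing$ (hence $C=\varnothing$) works directly, which is how the paper treats that case separately.
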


\begin{proof}
We construct $C$ and $a:C\to A$ by transfinite induction on $\alpha$.
For $\alpha=0$, the Yoneda lemma identifies $v:H_P\to H_A$ with a
holomorphic map $P\to A$, and we take $C=P$.

Consider a successor stage
\begin{equation}\label{cof:successor}
 Z_{\alpha+1}=Z_\alpha\amalg_{H_T}H_S,
 \qquad T\hookrightarrow S\text{ a Stein inclusion}.
\end{equation}
By \eqref{cof:yoneda}, the map $v$ is determined by
$v_P(\mathrm{id}_P)\in Z_{\alpha+1}(P)$. If this element lies in $Z_\alpha(P)$,
the induction hypothesis applies, and the pullback is unchanged
because $Z_\alpha\to Z_{\alpha+1}$ is a monomorphism.

Otherwise, $v$ is represented by a holomorphic map $g:P\to S$.
For $T\ne\varnothing$, apply the induction hypothesis to the
attaching map $H_T\to Z_\alpha$. It gives a closed reduced complex
subspace $D\subset T$ and a holomorphic map $b:D\to A$.
Formula \eqref{cof:set-pullback}, evaluated on each $Q\in\mathcal S_+$, gives
\begin{equation}\label{cof:cell-pullback}
 H_S\times_{Z_{\alpha+1}}H_A
 =H_T\times_{Z_\alpha}H_A=H_D,
\end{equation}
compatibly with the maps to $H_S$ and $H_A$; the latter is $H_b$.
The composite $D\hookrightarrow T\hookrightarrow S$ identifies $D$
with a closed reduced complex subspace of $S$. Put
\[
 C=(P\times_S D)_{\mathrm{red}},\qquad
 a:C\longrightarrow D\xrightarrow{b}A.
\]
Lemma~\ref{cof:reduced-pullback} and
\eqref{cof:cell-pullback} yield
\[
 H_P\times_{Z_{\alpha+1}}H_A
 =H_P\times_{H_S}H_D=H_C,
\]
with the required holomorphic maps. For a cell $0\to H_S$, the
pullback of $H_S\to Z_{\alpha+1}$ along $H_A\to Z_{\alpha+1}$ is
empty, so take $C=\varnothing$. If the successor map is an isomorphism
in simplicial degree $0$, use the preceding stage.

At a limit stage $\gamma$, the element $v_P(\mathrm{id}_P)\in Z_\gamma(P)$
belongs to some earlier $Z_\alpha(P)$. The induction hypothesis
there gives $C$ and $a$. Since $Z_\alpha\to Z_\gamma$ is a monomorphism,
$H_P\times_{Z_\gamma}H_A=H_P\times_{Z_\alpha}H_A=H_C$.
\end{proof}

\begin{corollary}\label{cof:closed-embedding}
Every holomorphic intermediate cofibration is a closed holomorphic
embedding.
\end{corollary}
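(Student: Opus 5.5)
We apply Proposition~\ref{cof:analytic-inverse-images} to the last stage $Z_\lambda=Z$ with the map $v=j\circ H_\iota$ for suitable test objects $P$, and compare the resulting pullback with $H_A$ via the retract identities \eqref{cof:retract-identities}. The plan is to cover $X$ by countably many Stein open sets $P\subset X$ (for instance coordinate balls), let $\iota_P:P\hookrightarrow X$ be the inclusion, and consider $v_P=j\circ H_{\iota_P}:H_P\to Z$. Proposition~\ref{cof:analytic-inverse-images} gives a closed reduced complex subspace $C_P\subset P$ and a holomorphic $a_P:C_P\to A$ such that $H_{C_P}=H_P\times_Z H_A$ along $v_P$ and $k$.

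Next we identify this pullback with $H_P\times_{H_X}H_A$, taken along $H_{\iota_P}$ and $i_*$. Since $k=ji_*$ and $rj=\mathrm{id}$, a pair $(g,b)\in\mathcal O(Q,P)\times\mathcal O(Q,A)$ satisfies $j(\iota_P g)=k(b)=j(i\circ b)$ if and only if $\iota_P g=i\circ b$, because $j$ is a monomorphism (it has the left inverse $r$). Hence $H_{C_P}\cong H_P\times_{H_X}H_A$ naturally on $\mathcal S_+$. Evaluating at a point shows that $C_P=i^{-1}(P)$ as sets, with $a_P$ the inverse bijection (here $i$ is injective, as shown above). Thus $i(A)\cap P$ is the underlying set of the closed analytic subset $C_P$ of $P$, so $i(A)$ is closed in $X$.

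For the embedding property, the map $A\supset i^{-1}(P)\to C_P$ induced by $i$ and the map $a_P:C_P\to A$ are mutually inverse holomorphic maps. Composition in either order gives maps that agree with the identity on $Q$-points for all Stein $Q$, and in particular on points and on Stein neighbourhoods; by Yoneda for $H_{C_P}$ they are equal. So $i^{-1}(P)\cong C_P$ as reduced spaces. Since $A$ is a manifold, $C_P$ is smooth, and $i|_{i^{-1}(P)}$ is a biholomorphism onto the closed submanifold $C_P$. To justify the Yoneda step for the space $C_P$, we test on small Stein open subsets of $A$ and on Stein neighbourhoods in $C_P$, and reduce to germs. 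We then glue over the countable cover: the $C_P$ agree on overlaps because each equals $i(A)\cap P$ with reduced structure. Therefore $i$ is a closed holomorphic embedding onto a closed submanifold.

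The main obstacle is the passage from the natural bijections on Stein test objects to an actual biholomorphism, in particular showing that $a_P\circ i=\mathrm{id}$ and $i\circ a_P=\mathrm{id}_{C_P}$ holomorphically rather than only set-theoretically. One concern is whether $C_P$ is smooth, since it might a priori be singular while $A$ is a manifold. We plan to handle this by evaluating on Stein open subsets $Q$ of $A$ and of a Stein open cover of $C_P$ (reduced Stein spaces may need to be replaced by smooth Stein test objects mapping onto them, such as local embeddings). If needed, we would use that $a_P$ is a holomorphic bijection inverse to $i$ pointwise, hence a homeomorphism. A continuous map from a manifold that is a holomorphic bijection onto a reduced space, with holomorphic inverse $a_P$, is then a biholomorphism.
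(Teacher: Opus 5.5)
Your proposal is correct and follows essentially the paper's argument: you apply Proposition~\ref{cof:analytic-inverse-images} to $j\circ H_{\iota_P}$ for Stein coordinate opens $P$, use $rj=\mathrm{id}$ and $k=ji_*$ to identify the pullback with $H_P\times_{H_X}H_A$, evaluate at a point to get $C_P=i(A)\cap P$ with $a_P$ as the pointwise inverse, and conclude locally on the target. The Yoneda and germ detour in your last step is unnecessary, and $C_P$ is not an object of $\mathcal S_+$ in any case: once $i|_{i^{-1}(P)}$ factors holomorphically through the reduced subspace $C_P$ (Lemma~\ref{cof:reduced-pullback}), both composites are holomorphic self-maps of reduced spaces that are the identity on points, hence the identity, which is your fallback argument and exactly the paper's.
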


\begin{proof}
Let $P\subset X$ be a nonempty Stein coordinate open set, and apply
Proposition~\ref{cof:analytic-inverse-images} to
$H_P\to H_X\xrightarrow{j}Z$.
The identities \eqref{cof:retract-identities} give, for every $Q\in\mathcal S_+$,
\[
 j_Q(x)=k_Q(a)\quad\Longleftrightarrow\quad x=i_{*,Q}(a).
\]
Thus the resulting closed reduced complex subspace $C\subset P$ satisfies
\begin{equation}\label{cof:local-pullback}
 H_C=H_P\times_ZH_A
 =H_P\times_{H_X}H_A=H_{A_P},
 \qquad A_P=i^{-1}(P).
\end{equation}
The proposition supplies a holomorphic map $a:C\to A$, and applying
$r$ to its commutative square shows that $ia$ is the inclusion of $C$
in $P$. Hence $a$ is holomorphic as a map $C\to A_P$.
Evaluating \eqref{cof:local-pullback} on the one-point Stein manifold gives
$i(A_P)=C$ and identifies $a$ as the pointwise inverse of
$i|_{A_P}$.
By Lemma~\ref{cof:reduced-pullback}, the map $i|_{A_P}:A_P\to P$
factors holomorphically as $b:A_P\to C$.
The composites $ab$ and $ba$ equal the identity on points, and
therefore as holomorphic maps of reduced spaces. It follows that
$A_P\to C$ is a biholomorphism and that
$i|_{A_P}$ is a closed embedding. Closed embeddings are local on
the target, so these coordinate opens give the conclusion.
\end{proof}

\subsection{The Stein target condition}

A countable disjoint union of Stein manifolds is an object of $\mathcal S$. Indeed, it is second countable; holomorphic functions
are chosen componentwise, and locally constant functions separate
the components. A compact set meets only finitely many components,
and its holomorphic hull is the finite union of its compact hulls
in those components. Every second-countable complex manifold has
a countable Stein coordinate cover, whose disjoint union is therefore an object of $\mathcal S$.

\begin{proposition}\label{cof:stein-target}
If $i:A\to X$ is a holomorphic intermediate cofibration, then $i$
is a biholomorphism or $X$ is Stein.
\end{proposition}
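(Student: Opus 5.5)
The idea is to show that $X$ is a holomorphic retract of a Stein manifold unless $i$ is a biholomorphism, and then conclude that $X$ is Stein. A closed submanifold that is a holomorphic retract of a Stein manifold is Stein, and a retract is closed, being the fixed set of $\rho\circ\iota$. Since Corollary~\ref{cof:closed-embedding} already makes $i$ a closed embedding, we may regard $A\subset X$ as a closed submanifold. We use the retract identities \eqref{cof:retract-identities} and the filtration \eqref{cof:filtration}.

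\textbf{Step 1: apply $j$ to the identity of a Stein cover.} Let $P=\coprod_m P_m$ be the disjoint union of a countable Stein coordinate cover of $X$. By the remark preceding the proposition, $P$ is an object of $\mathcal S_+$; write $\pi:P\to X$ for the induced local biholomorphism. Then $j_P(\pi)\in Z(P)$, and by the limit-stage argument it lies in some $Z_\alpha(P)$. By \eqref{cof:set-pushout}, applied inductively, every element of $Z(P)$ either lies in the image of $k_P$, or is represented by a single cell. In the second case it is the class of a holomorphic map $g:P\to S$ into the target of some cell $H_T\to H_S$ (or $0\to H_S$) that is not in the image of $H_T(P)$. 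The statement about single cells is not quite right for disconnected $P$, since different components may land in different cells. For that reason I would run the argument componentwise, and replace $P$ by connected Stein pieces where needed.

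\textbf{Step 2: dichotomy.} Suppose $j_P(\pi)$ lies in the image of $k_P$, say $j_P(\pi)=k_P(a)$. Applying $r$ gives $\pi=i\circ a$, so $i$ is surjective. Together with Corollary~\ref{cof:closed-embedding}, this makes $i$ a biholomorphism. This also has to hold per component: if for some component $P_m$ the element lands in the image of $k$, then $P_m\subset i(A)$. Since $i(A)$ is closed and open wherever this holds, connectedness arguments in $X$ should upgrade this to a global statement. Otherwise, the element lands in a cell, given by some $g:P_m\to S$. Restricting along the inclusion of $P_m$, any map $Q\to X$ from a Stein manifold $Q$ with image in $\pi(P_m)$ is sent by $j$ to $g\circ(\text{lift})$ in the same cell, by naturality. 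The key move is to apply naturality to an arbitrary Stein $Q$ and an arbitrary holomorphic $f:Q\to X$, not just those factoring through coordinate charts. Then $j_Q(f)\in Z(Q)$, and for $f$ not in the image of $i_*$ this element should be a map into some $S$. The aim is to show it is a single cell $S$ with a map $\sigma:X\to S$, obtained by taking $Q$ to be points and gluing over the cover. The maps $g_m$ agree on overlaps because $j$ is natural for the restrictions $P_m\cap P_n\hookrightarrow P_m$ (a Stein open), and the overlaps are nonempty when $X$ is connected. Composing with $r$ gives $r\circ H_\sigma=\mathrm{id}$ on points, so $r$ applied to $\mathrm{id}_S$, which lies in $H_X(S)=\mathcal O(S,X)$, yields $\rho:S\to X$. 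The identity $rj=\mathrm{id}$ then gives $\rho\circ\sigma=\mathrm{id}_X$. Thus $X$ is a holomorphic retract of the Stein manifold $S$.

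\textbf{Step 3: retract implies Stein.} Since $\sigma$ is an injective immersion with holomorphic left inverse $\rho$, it is a closed embedding onto $\sigma(X)=\{s:\sigma\rho(s)=s\}$. A closed complex submanifold of a Stein manifold is Stein.

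The main obstacle is Step 2: showing that all of $X$ (each connected component) is sent into a single cell rather than being split between $H_A$ and several cells. I would handle the split with \eqref{cof:set-pullback} and Proposition~\ref{cof:analytic-inverse-images}. On a connected chart $P_m$, the locus sent into $H_A$ is a closed analytic subset $C$ which, by \eqref{cof:local-pullback}, equals $A\cap P_m$. Since the element is either wholly in the image of $k$ or wholly in a cell, $C$ must be all of $P_m$ or empty. Hence $A$ is open and closed, and per component we get either a biholomorphism or the cell case. Disconnected $X$ with mixed behaviour is handled by splitting off the components contained in $A$. On the remaining components we take the retract of $S$, and a disjoint union of the resulting Stein pieces and biholomorphic pieces of $A$ is again Stein, since there $A$ is itself a union of components of the Stein side.
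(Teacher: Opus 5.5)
There is a genuine gap, and it starts with the diagnosis in your Step~1, which is mistaken. For a disconnected probe the single-cell statement is correct. Iterating \eqref{cof:set-pushout} along the filtration gives
\[
 Z(P)=H_A(P)\amalg\coprod_{\gamma}\bigl(H_{S_\gamma}(P)\setminus H_{T_\gamma}(P)\bigr),
\]
and $H_S(P)=\mathcal O(P,S)$ consists of maps defined on all of $P$; the presheaf $Z$ does not turn disjoint unions into products. So for $U=\coprod_n U_n$, the element $j_U(q)$ is either $k_U(a)$ or a single map $g:U\to S$ into one cell.

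By abandoning this probe and working componentwise, you lose the information that excludes mixed behaviour. Your last sentence asserts that the components of $X$ lying in $i(A)$ are Stein ``since there $A$ is itself a union of components of the Stein side''. This is unsupported: those components are biholomorphic to components of $A$, an arbitrary complex manifold. No argument on connected pieces can supply it. On the subcategory of nonempty connected Stein manifolds, $H_A\to H_{A\amalg S}$ is a pushout of $0\to H_S$ (Remark~\ref{cof:disconnected-probes}). Your argument, run on connected pieces, would apply verbatim there and conclude that $A\amalg S$ is Stein for non-Stein $A$.

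Two further steps are wrong or incomplete. First, you claim that $C=i(A)\cap P_m$ is all of $P_m$ or empty, so that $i(A)$ is open and closed. This fails already for the generating cofibration $\{0\}\hookrightarrow\C$ and a ball $P_m\ni0$, where $C=\{0\}$. The all-or-nothing dichotomy applies to the element $j_{P_m}(\pi|_{P_m})$, that is, to whether $P_m\subset i(A)$, not to the locus $C$. Second, naturality on an overlap gives equality only in $Z(W)$. That forces equality of maps only if both restrictions avoid $H_T(W)$. On a connected chart this needs the identity theorem, which you do not invoke, and $P_m\cap P_n$ need not be Stein.

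The paper keeps the single probe $U$. If $j_U(q)\notin H_A(U)$, it is one map $g:U\to S$ with $g(v)\notin T$ for some $v$ in a component $V$. For a Stein ball $W\subset U_m\cap U_n$, the maps $ge_m,ge_n:W\amalg V\to S$ coincide in $Z(W\amalg V)$. Their common restriction to $V$ keeps them outside $H_T(W\amalg V)$, so they are equal as maps. They glue to $s:X\to S$ with $ps=\mathrm{id}_X$, where $p$ is induced by $H_S\to Z\xrightarrow{r}H_X$. Hence all of $X$, including any components inside $i(A)$, is a closed submanifold of $S$.
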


\begin{proof}
The case $X=\varnothing$ gives $A=\varnothing$. Otherwise choose a
countable cover $(U_n)_{n\in I}$ of $X$ by nonempty Stein coordinate
balls and put
\[
 U=\coprod_{n\in I}U_n,\qquad q:U\to X.
\]
The restriction of $q$ to the component $U_n$ is the inclusion
$U_n\hookrightarrow X$, and $U\in\mathcal S_+$. Consider
\begin{equation}\label{cof:atlas-vertex}
 z=j_U(q)\in Z(U).
\end{equation}

If $z$ belongs to $Z_0(U)=H_A(U)$, it is represented by a
holomorphic map $a:U\to A$. Applying $r$ gives $ia=q$.
Since $i$ is injective, the restrictions of $a$ agree on chart
overlaps and glue to a holomorphic map $b:X\to A$ with
$ib=\mathrm{id}_X$. Injectivity also gives $bi=\mathrm{id}_A$.

If $z\notin Z_0(U)$, let $\beta+1$ be the least ordinal with
$z\in Z_{\beta+1}(U)$. This is a successor ordinal because limit stages
are pointwise unions. At this stage the pushout has the form
\begin{equation}\label{cof:birth}
 Z_{\beta+1}=Z_\beta\amalg_{H_T}H_S,
\end{equation}
where $H_T$ may be $0$.
A holomorphic map $g:U\to S$ represents $z$ in this pushout.
If $T\ne\varnothing$, then $g$ does not factor through $T$.
Lemma~\ref{cof:reduced-pullback} therefore gives an index $k$ and a
point $v\in U_k$ with $g(v)\notin T$; put $V=U_k$.
For a cell $0\to H_S$, choose any $v\in U$ and let $V$ be the
component containing $v$.

Let $W$ be a nonempty Stein coordinate ball in an overlap
$U_m\cap U_n$. Let $e_m,e_n:W\amalg V\to U$ restrict on $W$ to its inclusions in
the components $U_m$ and $U_n$, respectively, and both restrict on $V$
to the inclusion $V\hookrightarrow U$. Then
\[
 qe_m=qe_n.
\]
Naturality of $j$ in \eqref{cof:atlas-vertex} gives equality in
$Z(W\amalg V)$ of the elements represented by $ge_m$ and $ge_n$.
Both maps lie in
\[
 H_S(W\amalg V)\setminus H_T(W\amalg V),
\]
since their common restriction to $V$ takes $v$ outside $T$.
By \eqref{cof:set-pushout}, this complement injects into
$Z_{\beta+1}(W\amalg V)$; the map from this stage to each later stage is also a monomorphism.
The two maps are consequently equal as holomorphic maps to $S$.
Thus the restrictions $g|_{U_n}$ agree on overlaps and glue to a
holomorphic map
\[
 s:X\to S,\qquad sq=g.
\]

The composite $H_S\to Z\xrightarrow{r}H_X$ is induced, by the Yoneda lemma, by a holomorphic map $p:S\to X$.
Applying $r$ to $g$ gives $pg=q$; surjectivity of $q$ now yields
\begin{equation}\label{cof:holomorphic-retract}
 ps=\mathrm{id}_X.
\end{equation}
Thus $s$ is injective, has injective differential, and has continuous
inverse $p$ on its image. It is a holomorphic embedding, and
\[
 s(X)=\{y\in S:(sp)(y)=y\}
\]
is closed. Hence $X$ is biholomorphic to a closed complex
submanifold of the Stein manifold $S$ and is Stein.
\end{proof}

\begin{proof}[Proof of Theorem~\ref{cof:main}]
Corollary~\ref{cof:closed-embedding} and
Proposition~\ref{cof:stein-target} give the forward implication.
Conversely, a Stein inclusion is a generating cofibration at $n=0$,
and a biholomorphism induces an isomorphism of enriched presheaves.
The fine and coarse intermediate structures have the same
cofibrations \cite[``Factorization in the intermediate structure'']{Lar05}.
\end{proof}

\begin{remark}\label{cof:disconnected-probes}
For nonempty manifolds $A$ and $B$, a holomorphic map
$\{1,2\}\to A\amalg B$ can take its two points into different summands.
Hence $H_{A\amalg B}\ne H_A\amalg H_B$ on $\mathcal S_+$.
On the full subcategory of nonempty connected Stein manifolds these
presheaves coincide. There, $H_A\to H_{A\amalg S}$ is a pushout of
$0\to H_S$ for every Stein manifold $S$. If $A$ is non-Stein and $S$
is nonempty, this map is not an isomorphism and its target is non-Stein.
The proof above evaluates presheaves on the disconnected Stein manifolds
$U$ and $W\amalg V$.
Theorem~\ref{cof:main} also holds for second-countable $A$ and $X$ whose
connected-component dimensions have no uniform bound.
\end{remark}

\subsection{Proper Stein embeddings into Oka open sets}

We apply Proposition~\ref{tw:oka} to a tubular neighbourhood of a
properly embedded Stein manifold.

\begin{theorem}\label{thm:stein-open-embedding}
Every nonempty Stein manifold $S$ admits a proper holomorphic embedding
$j:S\hookrightarrow D$ into an Oka open set $D\subset\C^N$ such that
$j$ is a homotopy equivalence.
More precisely, a proper holomorphic embedding
$e:S\hookrightarrow\C^{2q}$, $q\ge1$, gives such an embedding into
an Oka open set $D\subset\C^{2q+1}$.
\end{theorem}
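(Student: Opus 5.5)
Start from a proper holomorphic embedding $e:S\hookrightarrow\C^{2q}$ with $q\ge1$. If only some embedding into $\C^m$ is given, compose with a linear inclusion $\C^m\subset\C^{2q}$ for $2q\ge m$; such embeddings exist by Narasimhan's theorem. Let $U\subset\C^{2q}$ be an open tubular neighbourhood of the closed submanifold $e(S)$ that deformation retracts onto $e(S)$; a standard choice is a neighbourhood of $e(S)$ on which the holomorphic (Docquier--Grauert) retraction is defined, shrunk to be fibrewise starlike. Put $D=D_U\subset\C^{2q+1}$ as in \eqref{tw:domain}. By Proposition~\ref{tw:oka}, $D$ is Oka.

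Next I need $j$. The naive choice $s\mapsto(e(s),0)$ lies in $D_U$, because $T_0=\mathrm{id}$, and it is holomorphic and proper into $\C^{2q+1}$, hence proper into $D$. I will take $j(s)=(e(s),0)$. For the homotopy statement, note that the diffeomorphism $\Phi$ of \eqref{tw:diffeomorphism} restricts on $U\times\{0\}$ to the identity onto $U\times\{0\}\subset D$. Hence the inclusion $U\times\{0\}\hookrightarrow D$ is a homotopy equivalence, being $\Phi$ composed with the homotopy equivalence $U\hookrightarrow U\times\C$. The map $e:S\to U$ is a homotopy equivalence by the choice of tubular neighbourhood. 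Composing, $j$ is a homotopy equivalence.

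The steps will be carried out in this order: choose $e$ and $q$; choose $U$ retracting onto $e(S)$; apply Proposition~\ref{tw:oka}; check properness of $j$ into $D$; and compose homotopy equivalences through $\Phi$. Properness into the open set $D$ needs a short argument. If $K\subset D$ is compact, then $K$ is compact in $\C^{2q+1}$, so $j^{-1}(K)$ is compact in $S$ by properness of $e$.

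The main obstacle is making sure the tubular neighbourhood $U$ really has $e(S)\hookrightarrow U$ a homotopy equivalence for a noncompact $S$. Here I would use the smooth tubular neighbourhood theorem for properly embedded (closed) submanifolds. This gives an open $U$ diffeomorphic to the normal bundle, which deformation retracts onto the zero section. No holomorphic retraction is needed, since Proposition~\ref{tw:oka} applies to arbitrary open $U$. The dimension assertion $N=2q+1$ is then immediate.
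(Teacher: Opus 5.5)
Your proposal is correct and follows essentially the same route as the paper's proof. Both take a smooth tubular neighbourhood $U$ of $e(S)$ that deformation retracts onto $e(S)$, the Oka set $D_U\subset\C^{2q+1}$ from Proposition~\ref{tw:oka}, the embedding $j(s)=(e(s),0)$ with properness inherited from $e$, and the homotopy equivalence obtained from $\Phi^{-1}\circ j=(e,0)$ into $U\times\C$; as you note yourself, the Docquier--Grauert holomorphic retraction is not needed.
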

\begin{proof}
By \cite{Nar60,EG92}, there is a proper holomorphic embedding of $S$
into a complex Euclidean space. Composing with a linear inclusion gives
$e:S\hookrightarrow\C^{2q}$ for some $q\ge1$.
The image is a closed smooth submanifold. Choose an open smooth tubular
neighborhood $U\subset\C^{2q}$ of $e(S)$, so that $e(S)$ is a strong
deformation retract of $U$~\cite[Theorem~6.24]{Lee13}.
Proposition~\ref{tw:oka} gives the Oka open set
$D_U\subset\C^{2q+1}$. The map
\[
 j:S\longrightarrow D_U,\qquad j(s)=(e(s),0),
\]
is a holomorphic injective immersion. For a compact set $K\subset D_U$,
the set $j^{-1}(K)$ is closed and is contained in the compact set
$e^{-1}(\operatorname{pr}_w K)$, so $j$ is proper.
The inverse of the diffeomorphism \eqref{tw:diffeomorphism} takes $j(s)$ to
$(e(s),0)\in U\times\C$. The tubular retraction and
contraction of the second factor show that $j$ is a homotopy
equivalence.
\end{proof}

\subsection{Acyclic embeddings and Oka bundles}

In the fine intermediate model structure, a holomorphic map of complex
manifolds is a weak equivalence if and only if it is a homotopy
equivalence \cite[\S4]{Lar04}. Following \cite[\S4]{Rit13}, an
\emph{acyclic embedding} is a proper holomorphic embedding that is a
homotopy equivalence.
Theorem~\ref{cof:main} then gives
\begin{equation}\label{cof:acyclic-criterion}
 \begin{gathered}
 \text{an acyclic embedding is a fine intermediate acyclic cofibration}\\
 \Longleftrightarrow\quad
 \text{it is a biholomorphism or its target is Stein}.
 \end{gathered}
\end{equation}

By Theorem~\ref{thm:stein-open-embedding}, every nonempty Stein manifold
has an acyclic embedding into an Oka open subset of a complex
Euclidean space. This answers the question about embeddings into Oka
manifolds stated in \cite[\S1.2]{DeV20}. Ritter proved that every open
Riemann surface admits an acyclic embedding into an elliptic manifold
\cite[Theorem~3]{Rit13}.
The relative construction below also controls a given map to an
aspherical manifold.

\begin{theorem}[factorizations through Oka maps]\label{thm:factorization}
Let $Y$ be a connected complex manifold.
\begin{enumerate}[label=(\roman*)]
\item If $Y$ is aspherical, then every holomorphic map $f:X\to Y$
from a nonempty Stein manifold factors as $f=p\circ\iota$, where
$\iota:X\to Z$ is an acyclic embedding and $p:Z\to Y$ is a
holomorphic fibre bundle with Oka fibre. In particular, $p$ is an
Oka map. If $Y$ is Oka, then so is $Z$.
\item If there is a holomorphic Serre fibration $Z\to Y$ with $Z$
contractible, then $Y$ is aspherical.
\end{enumerate}
\end{theorem}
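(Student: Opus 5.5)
For (i), I would pass to the universal cover. Let $\pi:\widetilde Y\to Y$ be the universal covering, so $\widetilde Y$ is contractible and $\Gamma=\pi_1(Y)$ acts freely and properly discontinuously on it by deck transformations. Embed $X$ properly in some $\C^{2q}$, $q\ge1$, by $e$, and let $D\subset\C^{2q+1}$ be the Oka open set of Theorem~\ref{thm:stein-open-embedding}, with the acyclic embedding $j:X\to D$, $j(x)=(e(x),0)$. The naive product $\iota=(j,f):X\to D\times Y$ is a proper holomorphic embedding, and the projection $D\times Y\to Y$ is a trivial bundle with Oka fibre $D$. However, $D\times Y\simeq X\times Y$, so $\iota$ is not a homotopy equivalence unless $Y$ is contractible. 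The fibre therefore has to be twisted by the fundamental group.

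Fix a base point and let $\Gamma=\pi_1(Y)$ act on $\widetilde Y$. The pullback $\widetilde X=X\times_Y\widetilde Y$ is a Galois $\Gamma$-covering of $X$, possibly disconnected, and it is Stein. It admits a $\Gamma$-equivariant holomorphic map $\tilde f:\widetilde X\to\widetilde Y$. I would then construct:
\begin{enumerate}[label=(\alph*)]
\item a $\Gamma$-equivariant proper holomorphic embedding $\tilde e:\widetilde X\to\C^{M}$, where $\Gamma$ acts on $\C^M$ holomorphically (for instance affinely or by a suitable representation) so that the quotient $(\widetilde Y\times\C^M)/\Gamma\to Y$ is a holomorphic fibre bundle with fibre $\C^M$;
\item an invariant tubular neighbourhood $U$ of the image of $(\tilde f,\tilde e)$ inside $\widetilde Y\times\C^M$, fibrewise over $\widetilde Y$, chosen so that each fibre $U_y$ is a homotopy-equivalent copy of the corresponding open set;
\item the fibrewise twisting $D_{U}$ of \eqref{tw:domain}, applied to each fibre, so that each fibre is Oka by Proposition~\ref{tw:oka}.
\end{enumerate}

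A cleaner route, which I would try first, is to take $Z=(\widetilde X'\times\widetilde Y)/\Gamma$ with fibre $F$ an Oka manifold carrying a free proper $\Gamma$-action. Here $\widetilde X\hookrightarrow F$ should be an equivariant acyclic embedding and $F$ should be contractible; for example, $F=D_U$ for a $\Gamma$-invariant tubular neighbourhood $U$ of $\widetilde X$ embedded in an affine $\Gamma$-space, with $U$ contractible after thickening. Asphericity of $Y$ gives $Z\simeq\widetilde X\times_\Gamma\widetilde Y$ when the fibre is contractible, and $X\simeq\widetilde X\times_\Gamma\widetilde Y$ because the action on $\widetilde Y$ is free with contractible total space; together these make the graph embedding $x\mapsto[\tilde x,\tilde f(\tilde x)]$ a homotopy equivalence. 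Properness follows because the first component is proper on the quotient. The bundle is holomorphic with Oka fibre, and holomorphic fibre bundles with Oka fibre are Oka maps \cite[\S7.4]{For17}. If $Y$ is Oka, then $Z$ is Oka because it is the total space of an Oka map over an Oka base.

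The main obstacle is producing an equivariant holomorphic model: a Stein or Oka fibre with a holomorphic $\Gamma$-action, possibly infinite, that contains $\widetilde X$ properly and equivariantly and is contractible. The simplest choice would be to make the fibre contractible and Oka, namely $\C^M$ or $D_U$ with $U$ a contractible invariant thickening, and to let all the topology come from $\widetilde Y$. I would attempt this first by working directly downstairs. Embed $X$ acyclically as above and use the twisted product of $D$ with the flat structure, so that the only twisting needed is the trivial action on $D$. The homotopy equivalence would then follow from a five-lemma comparison of the long exact sequences of the fibrations $Z\to Y$ and $X\to Y$ (homotopy-theoretically), using $\pi_k(Y)=0$ for $k\ge2$.

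For (ii), the long exact sequence of the Serre fibration $F\to Z\to Y$ with $Z$ contractible gives $\pi_k(Y)\cong\pi_{k-1}(F)$ for all $k$. This alone does not force asphericity, so holomorphy must be used. Since $Y$ is a connected complex manifold, I would use that the fibre is a complex space whose components are permuted by $\pi_1(Y)$. The intended argument is that a holomorphic Serre fibration with contractible total space restricts over the universal cover to one with fibre homotopy equivalent to $\Omega\widetilde Y$. Fibres of holomorphic maps are finite-dimensional analytic sets, hence have the homotopy type of finite-dimensional CW complexes, while $\Omega\widetilde Y$ for a simply connected, noncontractible finite-dimensional $\widetilde Y$ has nonzero homology in infinitely many degrees by Serre's theorem. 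This forces $\widetilde Y$ to be contractible, so $Y$ is aspherical.
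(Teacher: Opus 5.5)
Your part (ii) is essentially the paper's argument. The fibre is a finite-dimensional triangulable analytic set weakly equivalent to $\Omega Y$, with components $\simeq\Omega\widetilde Y$, and Serre's top-corner argument in the spectral sequence of the path fibration of $\widetilde Y$ kills $\widetilde H_*(\widetilde Y;K)$. Say explicitly that this is done over every field $K$, and that universal coefficients over $\Q$ and all $\mathbb F_p$, followed by Hurewicz and Whitehead, give contractibility.

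Part (i), however, has a genuine gap: none of the constructions you propose works.
\begin{itemize}
\item Your ``cleaner route'' asks for a contractible fibre $F$ receiving an equivariant acyclic embedding of $\widetilde X$. These requirements are incompatible unless $\widetilde X\simeq\operatorname{hofib}(f)$ is contractible. With $F$ contractible one gets $Z=(F\times\widetilde Y)/\Gamma\simeq Y$, not $Z\simeq X$; take $Y$ a point. The fibre must be a $\Gamma$-space homotopy equivalent to $\widetilde X$, not to a point.
\item Your fallback of working downstairs with the trivial action on $D$ is just the product $D\times Y$ that you already rejected. The five-lemma comparison fails because the induced map of fibres $\widetilde X\to D\simeq X$ is, up to $j$, the covering map $\widetilde X\to X$. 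That map is a weak equivalence only when $\Gamma=1$.
\item The equivariant model you call the main obstacle is never constructed. It would need a holomorphic $\Gamma$-action on $\C^M$, a $\Gamma$-equivariant proper holomorphic embedding of $\widetilde X$, an invariant tube $U$, and an induced holomorphic action on $D_U$. The twisting \eqref{tw:domain} is natural only for affine maps whose linear part commutes with $J$. Nothing in your sketch supplies this for infinite, possibly nonabelian $\Gamma$.
\end{itemize}

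The missing idea is that no equivariant holomorphic embedding is needed. Holomorphicity of the lift should come from the Oka principle for the bundle, and properness from an extra Euclidean factor. The paper proceeds as follows.
\begin{itemize}
\item Take an Oka manifold $W$ with a homotopy equivalence $h:X\to W$ (Theorem~\ref{intro:countable}), and a merely \emph{continuous} map $\varphi:W\to Y$ with $\varphi h\simeq f$.
\item The pulled-back covering $E=W\times_Y\widetilde Y$ inherits a complex structure from $W$, is Oka, and carries a free action of $\Gamma$ by holomorphic deck transformations. It is precisely the $\Gamma$-space equivalent to $\widetilde X$ that your construction lacks.
\item Then $p_0:Z_0=(\widetilde Y\times E)/\Gamma\to Y$ is a holomorphic fibre bundle with Oka fibre, hence an Oka map.
\item The other projection $a:Z_0\to E/\Gamma=W$ has contractible fibre $\widetilde Y$, so $Z_0\simeq X$. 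Asphericity is used again to show $p_0\simeq\varphi a$.
\item A homotopy inverse of $a$ composed with $h$ is a homotopy equivalence $X\to Z_0$ lying over $f$ up to homotopy. Homotopy lifting makes it an exact lift of $f$, and the parametric Oka property of $p_0$ deforms it through lifts to a holomorphic lift $g''$.
\item Finally, $\iota=(g'',j):X\to Z_0\times\C^m$, with $j$ a proper holomorphic embedding, is the acyclic embedding, and $p=p_0\circ\operatorname{pr}_{Z_0}$.
\end{itemize}
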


\begin{proof}
(i) Put $\pi=\pi_1(Y)$ and let $q:\widetilde Y\to Y$ be the
contractible universal cover. Theorem~\ref{intro:countable} gives
an Oka manifold $W$ and a homotopy equivalence $h:X\to W$.
Choose a continuous map $\varphi:W\to Y$ with
$\varphi h\simeq f$.
The pullback $E=W\times_Y\widetilde Y$ of the covering $q$ along
$\varphi$ carries the complex structure induced by its covering
map to $W$. The covering manifold $E$ is Oka
\cite[Proposition~5.6.3]{For17}, and $\pi$ acts freely by
holomorphic deck transformations. Set
\[
 Z_0=(\widetilde Y\times E)/\pi,\qquad
 p_0[\tilde y,e]=q(\tilde y),
\]
using the diagonal action. On an evenly covered open set $V\subset Y$,
a holomorphic local section $\sigma:V\to\widetilde Y$ gives the
trivialization
\[
 V\times E\xrightarrow{\sim}p_0^{-1}(V),\qquad
 (y,e)\longmapsto[\sigma(y),e].
\]
Thus $Z_0$ is a complex manifold and $p_0:Z_0\to Y$ is a holomorphic
fibre bundle with Oka fibre $E$, hence an Oka map
\cite[Corollary~7.4.8(i)]{For17}.

The other projection $a:Z_0\to E/\pi=W$ is a bundle with
contractible fibre $\widetilde Y$, and hence a homotopy equivalence.
The maps $p_0$ and $\varphi a$ from $Z_0$ to $Y$ are homotopic.
Indeed, both factor continuously through
\[
 Z_0\longrightarrow(\widetilde Y\times\widetilde Y)/\pi,
 \qquad
 [\tilde y,(w,\tilde y')]\longmapsto[\tilde y,\tilde y'],
\]
and then through the two projections to $Y$.
Triangulate $(\widetilde Y\times\widetilde Y)/\pi$ and lift the
resulting CW structure to $\widetilde Y\times\widetilde Y$.
This is a free $\pi$-CW structure. Since $\widetilde Y$ is contractible,
a $\pi$-equivariant homotopy between the two projections extends over
each cell, with the two projections prescribed at the endpoints.
The induced maps $(\widetilde Y\times\widetilde Y)/\pi\to Y$ are
therefore homotopic. Choose a continuous homotopy inverse
$b:W\to Z_0$ of $a$ and put $g=bh$. Then $g:X\to Z_0$ is a homotopy
equivalence and
\[
 p_0g\simeq\varphi abh\simeq\varphi h\simeq f.
\]

Since $p_0$ is a Serre fibration and $X$ is triangulable, homotopy
lifting deforms $g$ to a map $g'$ satisfying $p_0g'=f$.
The parametric Oka property of $p_0$ and the Stein property of $X$ then deform
$g'$, through lifts of $f$, to a holomorphic lift $g''$
\cite[Definition~7.4.7]{For17}.
Pair $g''$ with a proper holomorphic embedding $j:X\to\C^m$,
as in \cite[proof of Lemma~1]{Rit13}, and put
\[
 Z=Z_0\times\C^m,\qquad
 \iota=(g'',j),\qquad
 p=p_0\circ\operatorname{pr}_{Z_0}.
\]
The map $\iota$ is a proper injective immersion, hence a closed
holomorphic embedding. It is a homotopy equivalence because $g''$
is, and $p\iota=f$.
The map $p$ is a holomorphic fibre bundle with Oka fibre
$E\times\C^m$. If $Y$ is Oka, then $Z$ is Oka by \cite[Theorem~5.6.5]{For17}.

(ii) Write $p:Z\to Y$ for the given Serre fibration.
It is surjective by path lifting. For $y_0\in Y$, the fibre
$F=p^{-1}(y_0)$ is a closed analytic subset of $Z$.
Its underlying topological space has a triangulation
of real dimension at most $2\dim_{\C}Z$
\cite[\S3, Theorem~2]{Loj64}. Hence $H_j(F;K)=0$ for
$j>2\dim_{\C}Z$ and every field $K$.
Since $Z$ is contractible, $F$ is weakly equivalent to the based loop
space $\Omega Y$ at $y_0$, whose components are homotopy equivalent
to $\Omega\widetilde Y$.
It follows that $H_*(\Omega\widetilde Y;K)$ vanishes in large
degrees. The same is true of $H_*(\widetilde Y;K)$, since
$\widetilde Y$ is a finite-dimensional manifold.

Apply the homology Serre spectral sequence over $K$ to the path
fibration of the simply connected space $\widetilde Y$. If $a$ and
$b$ are the largest degrees with nonzero homology of $\widetilde Y$
and $\Omega\widetilde Y$, respectively, then
\[
 E^2_{a,b}=H_a(\widetilde Y;K)\otimes_K
 H_b(\Omega\widetilde Y;K)\ne0
\]
has no incoming or outgoing differential. The total path space is
contractible, so $a=b=0$. Thus
$\widetilde H_*(\widetilde Y;K)=0$ for every field $K$
\cite[Chapter~III, corollary to Proposition~3]{Ser51}.
The universal coefficient theorem, applied with $\Q$ and all
$\mathbb F_p$, gives $\widetilde H_*(\widetilde Y;\Z)=0$
\cite[Chapter~III, no.~3, Lemme]{Ser51}.
The Hurewicz and Whitehead theorems show that $\widetilde Y$ is
contractible.
\end{proof}

Suppose that a holomorphic map $f:X\to Y$ factors
as $f=p\iota$, where $\iota:X\to Z$ is an acyclic embedding and
$p:Z\to Y$ is a holomorphic Serre fibration. For $y\in Y$, the maps
$\iota$ and $\mathrm{id}_Y$ induce a weak equivalence from the homotopy
fibre of $f$ over $y$ to that of $p$ over $y$. The fibre $p^{-1}(y)$
is weakly equivalent to the homotopy fibre of $p$ over $y$, and its
underlying topological space has a
finite-dimensional triangulation by \cite[\S3, Theorem~2]{Loj64}.
Since $X$ and $Y$ are triangulable, the homotopy fibre of $f$
has CW type \cite[p.~408]{Hat02}. By Whitehead's theorem, it
therefore has the homotopy type of a finite-dimensional CW complex.

\begin{corollary}\label{cof:aspherical-factorization}
Let $X$ be a nonempty Stein manifold, $Y$ a connected complex manifold,
and $f:X\to Y$ a nullhomotopic holomorphic map. The following conditions
are equivalent:
\begin{enumerate}[label=(\roman*)]
\item $Y$ is aspherical.
\item $f$ factors as an acyclic embedding followed by an Oka map.
\item $f$ factors as an acyclic embedding followed by a holomorphic
Serre fibration.
\end{enumerate}
\end{corollary}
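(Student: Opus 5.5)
We prove the cycle of implications (i)$\Rightarrow$(ii)$\Rightarrow$(iii)$\Rightarrow$(i). The first two steps are essentially already available. For (i)$\Rightarrow$(ii), Theorem~\ref{thm:factorization}(i) applies to any holomorphic map $f:X\to Y$ from a nonempty Stein manifold to an aspherical $Y$. It factors $f=p\circ\iota$, with $\iota$ an acyclic embedding and $p$ a holomorphic fibre bundle with Oka fibre, hence an Oka map by \cite[Corollary~7.4.8(i)]{For17}. For (ii)$\Rightarrow$(iii), an Oka map is by definition a Serre fibration, and it is holomorphic.

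The substance is (iii)$\Rightarrow$(i), which reduces to Theorem~\ref{thm:factorization}(ii) via the homotopy-fibre comparison made in the paragraph preceding the corollary. Suppose $f=p\iota$ with $\iota:X\to Z$ an acyclic embedding and $p:Z\to Y$ a holomorphic Serre fibration. Fix $y\in Y$. The homotopy fibre of $f$ over $y$ is weakly equivalent to the homotopy fibre of $p$ over $y$, and hence to the actual fibre $F=p^{-1}(y)$. Since $f$ is nullhomotopic, its homotopy fibre is weakly equivalent to $X\times\Omega Y$, where $\Omega Y$ is based at $y$ (using connectedness of $Y$ to move the constant value to $y$). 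Thus $X\times\Omega Y\simeq_w F$.

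Now $F$ is a closed analytic subset of $Z$. It is triangulable of finite dimension \cite[\S3, Theorem~2]{Loj64}, so $H_j(F;K)=0$ for $j$ large and every field $K$. By the Künneth formula over the field $K$, $H_*(X;K)\otimes H_*(\Omega Y;K)$ vanishes in large degrees. Since $X$ is nonempty, $H_0(X;K)\ne0$, and so $H_*(\Omega Y;K)$, and hence $H_*(\Omega\widetilde Y;K)$, vanishes in large degrees. Here $\Omega Y$ is a disjoint union of copies of $\Omega\widetilde Y$ indexed by $\pi_1(Y)$, and a nonzero class in a single component persists in the disjoint union. The rest of the proof of Theorem~\ref{thm:factorization}(ii) then applies verbatim. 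We run the Serre spectral sequence of the path fibration over the finite-dimensional simply connected manifold $\widetilde Y$ with field coefficients; the corner term forces $\widetilde H_*(\widetilde Y;K)=0$ for every field $K$. This gives integral acyclicity, and then contractibility by Hurewicz and Whitehead. Hence $Y$ is aspherical.

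The main obstacle is making the homotopy-fibre identification precise. We must check that the nullhomotopy gives $\operatorname{hofib}(f)\simeq X\times\Omega Y$ even when $X$ is disconnected. On each component of $X$ this holds with a possibly different path to $y$, but any one component suffices for the degree argument. We also need that weak equivalences preserve homology, so that finite-dimensionality of $H_*(F;K)$ transfers to $X\times\Omega Y$. Both points are standard, and the remaining steps reuse earlier results directly.
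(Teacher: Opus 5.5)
Your proposal is correct and follows essentially the same route as the paper: identify $\operatorname{hofib}(f)\simeq X\times\Omega Y$ with the finite-dimensional fibre $p^{-1}(y)$ up to weak equivalence, deduce that $H_*(\Omega\widetilde Y;K)$ vanishes in large degrees, and rerun the Serre spectral sequence, Hurewicz and Whitehead argument from Theorem~\ref{thm:factorization}(ii). The only difference is cosmetic: you bound $H_*(\Omega Y;K)$ using the K\"unneth formula together with $H_0(X;K)\ne0$, whereas the paper uses that $\Omega Y$ is a retract of $X\times\Omega Y$.
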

\begin{proof}
The implication (i)$\Rightarrow$(ii) is Theorem~\ref{thm:factorization}(i),
and (ii)$\Rightarrow$(iii) follows from the definition of an Oka map.
Assume (iii). By the preceding paragraph, the homotopy fibre of $f$
has the homotopy type of a finite-dimensional CW complex. Choose the
endpoint of a nullhomotopy of $f$ as the basepoint of $Y$, and write
$\operatorname{hofib}(f)$ for the homotopy fibre over this point.
The nullhomotopy gives
\[
 \operatorname{hofib}(f)\simeq X\times\Omega Y.
\]
Choose $x_0\in X$. The inclusion $\omega\mapsto(x_0,\omega)$ and the
second projection make $\Omega Y$ a retract of $X\times\Omega Y$.
Thus $H_j(\Omega Y;K)=0$ in sufficiently large degrees for every field
$K$. The component of the constant loop is homotopy equivalent to
$\Omega\widetilde Y$, and its homology is a direct summand of
$H_*(\Omega Y;K)$. Hence $H_j(\Omega\widetilde Y;K)$ also vanishes in
large degrees. The manifold $\widetilde Y$ has real dimension
$2\dim_{\C}Y$, so $H_j(\widetilde Y;K)=0$ for $j>2\dim_{\C}Y$.
Applying the homology
Serre spectral sequence and then the universal coefficient, Hurewicz
and Whitehead theorems as in the proof of
Theorem~\ref{thm:factorization}(ii) shows that $\widetilde Y$ is
contractible.
\end{proof}

By Corollary~\ref{cof:aspherical-factorization}, no constant map from
a nonempty Stein manifold to $\CP^1$ factors as an acyclic embedding
followed by a holomorphic Serre fibration.

An acyclic embedding $X\hookrightarrow Z$ with $X$ Stein and $Z$ Stein
Oka represents a fibrant replacement of $h_X$ in the fine intermediate
model structure: $h_X\to h_Z$ is an acyclic cofibration and $h_Z$ is
fibrant. Ritter asks whether every Stein manifold admits an acyclic
embedding into a Stein elliptic manifold \cite[\S4]{Rit13}; see also
L\'arusson's discussion of fibrant models
\cite[\S4.4, p.~803]{For13}.

For Stein manifolds, the Oka property and ellipticity are equivalent
\cite[Theorem~2]{Lar05}. Moreover, if a Stein manifold $X$ is homotopy
equivalent to a Stein Oka manifold $W$, the Oka principle makes a chosen homotopy
equivalence $X\to W$ holomorphic. Pairing this map with a proper
holomorphic embedding $X\hookrightarrow\C^m$ gives an acyclic
embedding into the Stein Oka manifold $W\times\C^m$.
By Remark~\ref{rem:stein-types}, every countable finite-dimensional
CW complex has a Stein realization. Thus Ritter's question is
equivalent to the following problem.

\begin{problem}\label{prob:stein-oka}
Is every countable finite-dimensional CW complex homotopy
equivalent to a Stein Oka manifold?
\end{problem}

\subsection*{Acknowledgements}
Yuta Kusakabe thanks Franc Forstneri\v{c}, Shulim Kaliman, Finnur L\'arusson and Riccardo Ugolini for comments on his earlier preprint \cite{Kus22}, and Shulim Kaliman for pointing out Morgan's work \cite{Mor78}. His work was supported by JSPS KAKENHI Grant Numbers JP21K20324 and JP24K16919.

\subsection*{Use of generative AI}
Generative AI tools were used to assist with mathematical arguments, literature searches, and the preparation and revision of the manuscript.

\end{document}